\theoremstyle{plain}% Theorem-like structures provided by amsthm.sty
\newtheorem{theorem}{Theorem}[section]
\newtheorem{lemma}[theorem]{Lemma}
\newtheorem{corollary}[theorem]{Corollary}
\newtheoremstyle{claims}% ⟨name⟩ 
{3pt}% ⟨Space above⟩
{3pt}% ⟨Space below⟩
{\itshape}% ⟨Body font⟩
{}% ⟨Indent amount⟩1
{\itshape}% ⟨Theorem head font⟩
{}% ⟨Punctuation after theorem head⟩
{.5em}% ⟨Space after theorem head⟩2
{}% ⟨Theorem head spec (can be left empty, meaning ‘normal’)⟩
\theoremstyle{claims}
\newtheorem{claim}{\textit{Claim}}
\theoremstyle{definition}
\newtheorem{definition}[theorem]{Definition}
\theoremstyle{remark}
\newtheorem{remark}{Remark}
\newcounter{cases}
\newcounter{subcases}[cases]
\newcounter{subsubcases}[subcases]
\newenvironment{mycases}
  {%
    \setcounter{cases}{0}%
    \setcounter{subcases}{0}%
    \setcounter{subsubcases}{0}%
    \def\case
      {%
        \par\noindent
        \refstepcounter{cases}%
        \textit{\textbf{Case (\thecases})}
      }%
    \def\subcase
      {%
        \par\noindent
        \refstepcounter{subcases}%
        \textbf{\textit{Case (\thecases.\thesubcases) }}
      }%
     \def\subsubcase
      {%
        \par\noindent
        \refstepcounter{subsubcases}%
        \textbf{\textit{Case (\thecases.\thesubcases.\thesubsubcases) }}
      }%
  }
  {%
    \par
  }
\renewcommand*\thecases{\arabic{cases}}
\renewcommand*\thesubcases{\arabic{subcases}}
\renewcommand*\thesubsubcases{\arabic{subsubcases}}
\newcommand*{\QED}{\hfill\ensuremath{\diamondsuit}}%
\newcommand*{\tagg}{\mathrm{tag}}%
\definecolor{dark-blue}{RGB}{60,80,170}
\definecolor{dark-red}{RGB}{240,50,60}
\definecolor{dark-orange}{RGB}{255,147,23}
\definecolor{dark-green}{RGB}{65, 173, 57}
\begin{document}

%\articletype{ARTICLE TEMPLATE}% Specify the article type or omit as appropriate

\title{$2$-nested matrices: towards understanding the structure of circle graphs}
%\thanks{This work was partially supported by ANPCyT PICT-2015-2218, UBACyT Grants 20020170100495BA and 20020160100095BA, and Programa Regional MATHAMSUD MATH190013}.}

\author{\name{Guillermo Dur\'an\textsuperscript{1,2,3}
%\thanks{Partially supported by ISCI CONICYT PIA FB0816; ICM P-05-004-F (Chile).}
  and Nina Pardal\textsuperscript{1,4}
  %\thanks{Partially supported by a CONICET doctoral scholarship.}
  and Mart\'in D. Safe\textsuperscript{5,6} }
  %\thanks{Partially supported by Universidad Nacional del Sur Grants PGI 24/L103 and PGI 24/L115, and ANPCyT PICT-2017-1315.}}
  \affil{\textsuperscript{1 }CONICET-Universidad de Buenos Aires, Instituto de C\'alculo (IC), Buenos Aires, Argentina\\  
 \textsuperscript{2} Universidad de Buenos Aires, Facultad de Ciencias Exactas y Naturales, Departamento de Matem\'atica, Buenos Aires, Argentina\\ 
  \textsuperscript{3} Departamento de Ingenier\'ia Industrial, Facultad de Ciencias F\'isicas y Matem\'aticas, Universidad de Chile, Santiago, Chile\\  
  \textsuperscript{4} CONICET-Universidad de Buenos Aires, Instituto de Investigaci\'on en Ciencias de la Computaci\'on (ICC), Buenos Aires, Argentina\\  
  \textsuperscript{5} Departamento de Matem\'atica, Universidad Nacional del Sur (UNS), Bah\'ia Blanca, Argentina\\
  \textsuperscript{6} INMABB, Universidad Nacional del Sur (UNS)-CONICET, Bah\'ia Blanca, Argentina} 
  }

\maketitle

\begin{abstract}

A $(0,1)$-matrix has the \emph{consecutive-ones property} (C$1$P) if its columns can be permuted to make the $1$'s in each row appear consecutively. This property was characterised in terms of forbidden submatrices by Tucker in 1972. Several graph classes were characterised by means of this property, including interval graphs and strongly chordal digraphs.

In this work, we define and characterise $2$-nested matrices, which are $(0,1)$-matrices with a variant of the C$1$P and for which there is also certain assignment of one of two colors to each block of consecutive $1$'s in each row.
%whose columns can be permuted such that the $1$'s appear consecutively in either one or two blocks and for which there is a particular $2$-color assignment for the rows.
The characterization of $2$-nested matrices in the present work is of key importance to characterise split graphs that are also circle by minimal forbidden induced subgraphs.

\end{abstract}

\begin{keywords}
consecutive-ones property, circle graphs, split graphs, $2$-nested matrices
\end{keywords}

\section{Introduction}\label{section:intro}

A $(0,1)$-matrix has the \emph{consecutive-ones property} (C$1$P) if there is a permutation of its columns such that the $1$'s in each row appear consecutively.
It appears naturally in a wide range of applications, and more precisely, in any problem in which we are required to linearly arrange a set of objects with the restriction that some sets of objects must appear consecutively~\cite{AS95,L83,W97}.

The C$1$P has been widely studied. The matrices with this property were studied by Fulkerson and Gross in \cite{FG65}, where a characterization of interval graphs in terms of the C$1$P of their clique-matrices was given. %In the same article,} the first polynomial-time recognition algorithms for both interval graphs and the C1P were found}. 
%Booth and Lueker proposed the first linear complexity solution for the problem in 1976, in addition to a compact data structure able to elegantly represent all valid permutations. This structure, called PQ tree, has a complicated implementation, despite its nice theory. In 1992, Hsu also presented a linear complexity solution for the problem without using PQ trees and avoiding its complexity of implementation, but the details of his method are still difficult to grasp.
The C$1$P, along with some variants of it, has been widely used to study structural properties of several other graph classes, such as proper interval graphs~\cite{R68}, proper interval bigraphs~\cite{SS94}, strongly chordal digraphs \cite{HHHL19}, and adjusted interval digraphs \cite{FHHR12}. In 1972, Tucker~\cite{T72} presented a characterization of the C$1$P in terms of forbidden submatrices. The corresponding forbidden matrices were later called \emph{Tucker matrices}.

In this work, we define and characterise $2$-nested matrices, which are $(0,1)$-matrices with a variant of the C$1$P and for which there is also a particular assignment of one of two colors to each block of consecutive $1$'s in each row. This characterization is a continuation of the work in~\cite{PDGS18}. %, which was a first approach to the characterizaion of those $(0,1)$-matrices with the C$1$P that admit a $2$-coloring assignment for the rows based in some containment relationship between the subsets of $1$'s on each row. 
A graph is \emph{circle}~\cite{EI71} if it is the intersection graph of a set of chords on a circle; if so, the set of chords is called a \emph{circle model} of the graph. A \emph{split graph}~\cite{FH76} is any graph $G$ admitting a \emph{split partition} $(K,S)$, i.e.\ a partition of its vertex set into a clique $K$ and a stable set $S$; we denote it by $G=(K,S)$. In~\cite{BDPS20,P20}, we addressed the problem of characterizing those split graphs that are also circle by strongly relying on the characterization of $2$-nested graphs proved in the current paper. We now briefly introduce the connection between split graphs that are circle graphs and $2$-nested matrices.

Let us first consider a split graph $G$ that is \emph{minimally non-circle}, i.e.\ $G$ is non-circle but any proper induced subgraph of $G$ is circle. \emph{Permutation graphs} are exactly those comparability graphs whose complement is also a comparability graph~\cite{EPL72}. 
Since permutation graphs are circle (see e.g.~\cite[p.\ 252]{G04}) and $G$ is non-circle, $G$ is not a permutation graph. Comparability graphs were characterised by forbidden induced subgraphs in~\cite{G67}. %(See~\cite{G67} or~\cite{BDGS10} for a complete list of the minimal forbidden induced subgraphs for comparability).
This characterization immediately leads to a forbidden induced subgraph characterization for permutation graphs.
By relying on the corresponding list of forbidden subgraphs and the fact that $G$ is also a split graph, we concluded that $G$ contains an induced subgraph $H$ isomorphic to one of the graphs in Figure~\ref{fig:forb_permsplit_base}~(see \cite{BDPS20}). %, since these are the only circle graphs on the list. %Notice that these four graphs are also circle graphs.
\begin{figure}[h]
\centering
\includegraphics[scale=.25]{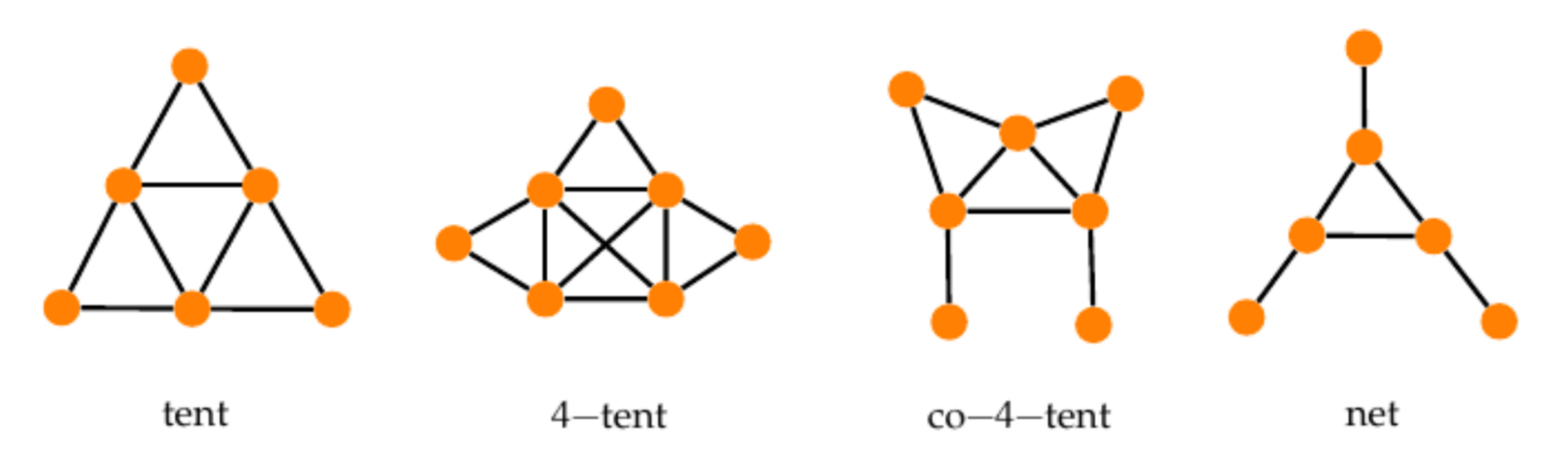}
\caption{Forbidden induced subgraphs for permutation graphs within split graphs.} \label{fig:forb_permsplit_base}
\end{figure}
%2) Luego, dar un poco de motivacion sobre qué de la estructura de estos grafos fue que llevo a definir las matrices 2N, es decir: 
%     - hablar de las matrices de adyacencia a(s,k) y de la propiedad circular dada por los modelos, representada en los 1's consecutivos por fila, 
%     - de la necesidad de que algunas filas tengan una etiqueta que fuerce a que los bloques de 1's deban empezar o terminar al comienzo o al final de la matriz respect.... y de ahi la definicion de enriched
%     - de que las cuerdas independientes no se cruzan y por ello o bien los vecindarios deben estar encajados o ser disjuntos, o sus cuerdas deben dibujarse en distintas porciones del círculo. Cómo esto se da solamente en algunos pocos subconjuntos de vértices independientes, los cuales tienen la caracteristica de poder dibujarse en dos porciones distintas del círculo, de ahí el 2-coloreo de las filas (o mas bien de los bloques de 1's, que son los que nos interesa que no se crucen en la matriz o en el modelo).
%%%%%%%%%%%%%%%
%Let $s_1, \ldots, s_n$ and $v_1, \ldots, v_m$ be linear orderings of $S$ and $K$, respectively. 

Let us now consider an arbitrary split graph $G$ (either circle or not). Because of the discussion in the preceding paragraph, if $G$ contains none of the graphs in Figure~\ref{fig:forb_permsplit_base} as an induced subgraph, then $G$ is a circle graph. Thus, we assume next that $G$ contains an induced subgraph $H$ isomorphic to tent, $4$-tent or co-$4$-tent. Let $(K,S)$ be a split partition of $G$. %For each subgraph $H$, 
We consider a partition $K_1, \ldots, K_j$ of the vertices of $K$, where each set $K_i$ consists of all the vertices of $K$ having a same set of neighbours in $V(H)\cap S$.
For each set $K_i$, let $S_i$ be the set of vertices in $S$ that are adjacent to at least one vertex of $K_i$. We consider the $(0,1)$-matrices $A(S_i,K_i)$ that represent the adjacencies between $S_i$ and $K_i$.
%$G$ as the $(0,1)$-matrix whose rows represent the vertices in $S$ adjacent to some vertex in $K_i$ and whose columns represent the vertices in $K_i$. % defined by $A(i,j)=1$ if $s_i$ is adjacent to $v_j$ and $A(i,j)=0$, otherwise.
In~\cite{BDPS20}, %Theorem~\ref{teo:2-nested_charact},\todo{Este teorema?} 
we show that it suffices to study each of these matrices and the relationship between them to decide whether $G$ is a circle graph or not. More precisely, the key condition for $G$ to be a circle graph is that each of these matrices $A(S_i,K_i)$ is $2$-nested. %By handling the case in which $G$ contains an induced net separately, 
This analysis allowed us to give a minimal forbidden induced subgraph characterization of those split graphs that are circle graphs~\cite{BDPS20}.

This work is organized as follows. In Section~\ref{section:basic_defs} we give some basic definitions and notation that will be useful thoughout this paper. In Section~\ref{section:motivation}, we give a motivating example and set the necessary bases to define $2$-nested matrices.
%: we define and characterise enriched, admissible, LR-orderable and partially $2$-nested matrices. Each of these notions allowed to define what is a ``valid pre-coloring'' and characterise those enriched matrices with valid pre-colorings that admit an LR-ordering, which is the property of having a lineal ordering $\Pi$ of the columns such that, when ordered according to $\Pi$, the non-LR-rows and the complements of the LR-rows have the C$1$P, those rows labeled with L start in the first column and those rows labeled with R end in the last column.
In Section~\ref{section:2nested_matrices}, we characterise $2$-nested matrices by minimal forbidden subconfigurations. 

\section{Basic definitions and notation} \label{section:basic_defs}

Let $A=(a_{ij})$ be a $n\times m$ $(0,1)$-matrix.
We denote $a_{i.}$ and $a_{.j}$ the $i$th row and the $j$th column of matrix $A$. From now on, we associate each row $a_{i.}$ with the set of columns in which $a_{i.}$ has a $1$. For example, the \emph{intersection} of two rows $a_{i.}$ and $a_{j.}$ is the subset of columns in which both rows have a $1$.
Two rows $a_{i.}$ and $a_{k.}$ are \emph{disjoint} if there is no $j$ such that $a_{ij} = a_{kj} = 1$.
We say that $a_{i.}$ is \emph{contained} in $a_{k.}$ if for each $j$ such that $a_{ij} = 1$ also $a_{kj} = 1$. We say that $a_{i.}$ and $a_{k.}$ are \emph{nested} if $a_{i.}$ is contained in $a_{k.}$ or $a_{k.}$ is contained in $a_{i.}$.
We say that a row $a_{i.}$ is \emph{empty} if every entry of $a_{i.}$ is $0$, and we say that $a_{i.}$ is \emph{nonempty} if there is at least one entry of $a_{i.}$ equal to $1$.
We say that two nonempty rows \emph{overlap} if they are non-disjoint and non-nested.
For every nonempty row $a_{i.}$, let $l_i = \min\{ j \colon\,a_{ij} = 1 \}$ and $r_i = \max\{ j \colon\,a_{ij} = 1 \}$ for each $i\in\{1,\ldots,n\}$.
Finally, we say that $a_{i.}$ and $a_{k.}$ \emph{start} (resp.\ \emph{end}) \emph{in the same column} if $l_i = l_k$ (resp.\ $r_i = r_k$), and we say $a_{i.}$ and $a_{k.}$ \emph{start (end) in different columns}, otherwise. The \emph{complement} of a row of a $(0,1)$-matrix arises by turning all the $0$'s into $1$'s and vice versa. We denote the complement of a row $r$ by $\overline r$.

All graphs in this work are simple, undirected, with no loops and no multiple edges. We denote the vertex and edge set of a graph by $V(G)$ and $E(G)$, respectively. A \emph{clique} is a set of pairwise adjacent vertices. A \emph{stable set} is a set of pairwise nonadjacent vertices. Let $G$ be a graph. If $W\subseteq V(G)$, the \emph{subgraph of $G$ induced by $W$}, denoted $G[W]$, is the graph with vertex set $W$ and whose edges are those of $G$ with both endpoints in $W$. If $H$ is an induced subgraph of $G$, we denote by $G-H$ the graph $G[V(G)-V(H)]$. 
%The pair $(K,S)$ is a \emph{split partition} of a graph $G$ if $\{K,S\}$ is a partition of the vertex set of $G$ and the vertices of $K$ (resp.\ $S$) are pairwise adjacent (resp.\ nonadjacent), and we denote it $G=(K,S)$. A graph $G$ is a \emph{split graph} if it admits some split partition. Let $G$ be a split graph with split partition $(K,S)$, $n=\vert S\vert$, and $m=\vert K\vert$.
We say a vertex $v$ is \emph{complete to} the set of vertices $X$ if $v$ is adjacent to every vertex in $X$, and we say $v$ is \emph{anticomplete to} $X$ if $v$ has no neighbour in $X$. We say that $v$ is \emph{adjacent to} $X$ if $v$ has at least one neighbour in $X$.
If $v$ is a vertex of $G$, we denote by $N(v)$ the set of vertices adjacent to $v$. 
Given two vertices $v_1$ and $v_2$ in $S$, we say that $v_1$ and $v_2$ are \emph{nested} if either $N(v_1) \subseteq N(v_2)$ or $N(v_2) \subseteq N(v_1)$. The \emph{length} of a path or cycle is the number of edges of the path joining consecutive vertices. A path or cycle is \emph{odd} or \emph{even} depending on whether its length is odd or even, respectively. A path or cycle is \emph{induced} if there is no edge joining two non-consecutive vertices. A \emph{$2$-coloring} of $G$ is a mapping that assigns one of two colors to each vertex of $G$. A $2$-coloring is \emph{proper} if each two adjacent vertices are assigned different colors.

Let $G=(K,S)$ be a split graph. Let $s_1, \ldots, s_n$ and $v_1, \ldots, v_m$ be linear orderings of $S$ and $K$, respectively. Let $A= A(S,K)$ be the $n\times m$ matrix defined by $A(i,j)=1$ if $s_i$ is adjacent to $v_j$ and $A(i,j)=0$, otherwise.
From now on, we associate the rows (resp.\ columns) of the adjacency matrix $A(S,K)$ with the corresponding vertex in $S$ (resp.\ vertex in $K$). Given a partition $K_1,K_2,\ldots,K_j$ of $K$ and a vertex $v$ in $S$, we denote $N_i(v) = N(v) \cap K_i$.

\section{Motivation: the connection between 2-nested matrices and circle graphs} \label{section:motivation}

In this section we first give motivating examples for the definition of nested and $2$-nested matrices. Afterwards, we define nested and $2$-nested matrices, which are of fundamental importance to describe a circle model for those split graphs that are also circle, as we will see in the following examples. 

Let us consider the split graph $G=(K,S)$ represented in Figure~\ref{fig:example_graph0}. Since $G$ contains an tent $H$ induced by $\{k_1,k_3,k_5,s_{13},s_{35},s_{51}\}$, we consider the partitions $K_1, K_2 \ldots, K_6$ of $K$ and $\{ S_{ij} \}_{1 \leq i,j \leq 6}$ of $S$, defined as follows.
\begin{itemize}
 \item For each $i\in\{1,3,5\}$, let $K_i$ be the set of vertices of $K$ whose neighbours in $V(H)\cap S$ are precisely $s_{(i-2)i}$ and $s_{i(i+2)}$ (where subindexes are modulo~$6$).
 \item For each $i\in\{2,4,6\}$, let $K_i$ be the set of vertices of $K$ whose only neighbour in $V(H)\cap S$ is $s_{(i-1)(i+1)}$ (where subindexes are modulo~$6$).
 \item For $i,j\in\{1,\ldots,6\}$, let $S_{ij}$ be the set of vertices of $S$ that are adjacent to some vertex in $K_i$ and some vertex in $K_j$, are complete to $K_{i+1}$, $K_{i+2}$ ,$\ldots, K_{j-1}$, and are anticomplete to $K_{j+1}$, $K_{j+2},\ldots, K_{i-1}$ (where subindexes are modulo~$6$).
\end{itemize}

\begin{figure}[h!] 
	\centering
%	\begin{subfigure}[b]{0.29\textwidth}
%		\includegraphics[width=\textwidth]{example_tentmodel.png}
%		\caption{A circle model for tent.}
%		\label{fig:example_tent}
%	\end{subfigure}	
	 \begin{subfigure}[b]{0.34\textwidth}
		\includegraphics[width=\textwidth]{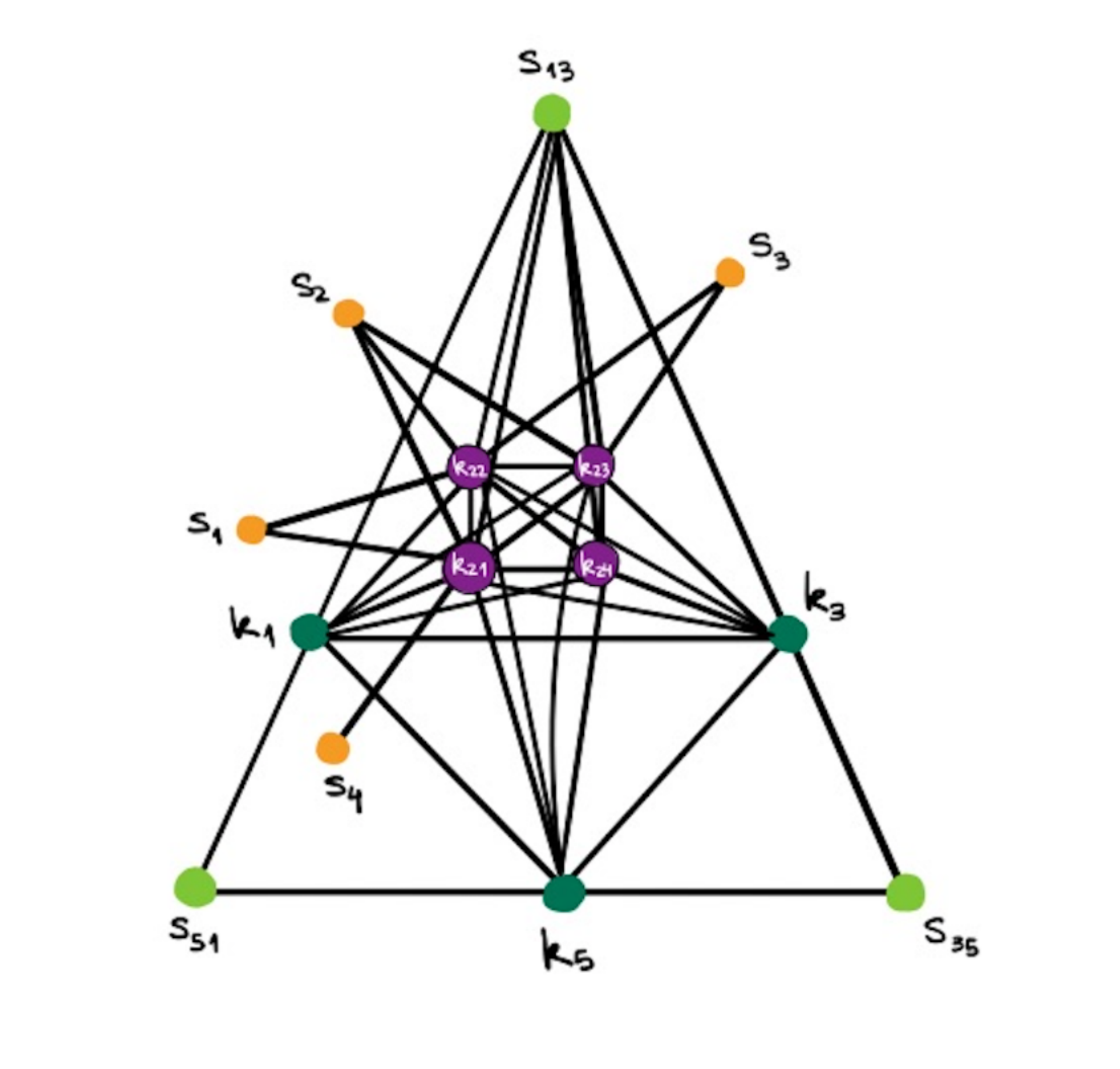}
		\caption{A split circle graph $G$.}
		\label{fig:example_graph0}
	\end{subfigure}
	\begin{subfigure}[b]{0.36\textwidth}
		\includegraphics[width=\textwidth]{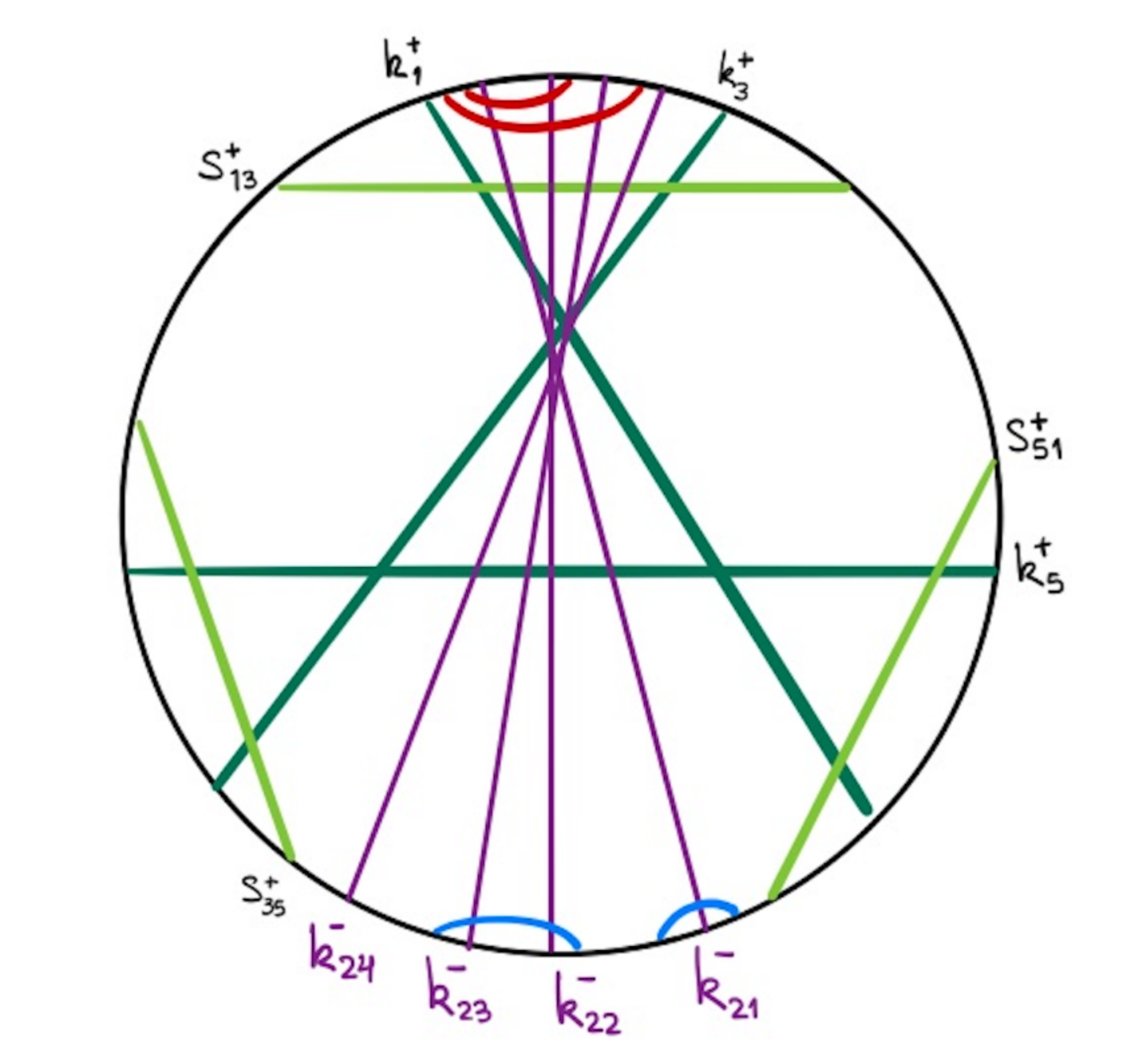}
		\caption{A circle model for $G$.}
		\label{fig:example_model0}
	\end{subfigure}
	\caption{Graph and circle model of Example $1$.}
\end{figure}

Notice that every vertex in $K \setminus V(H)$ lies in $K_2$, for the only adjacency of these vertices with regard to $V(H)\cap S$ is the vertex $s_{13}$. 
Thus, $K_2 = \{ k_{21}$, $k_{22}$, $k_{23}$, $k_{24}\}$. %\todo{En la figura, estos v\'ertices no son adyacentes a $k_1$, $k_2$ y $k_3$}
Moreover, the orange vertices are precisely $S\setminus V(H)$ and these vertices are adjacent only to vertices in $K_2$. Thus, they all lie in $S_{22}$.
%Notice that $G$ is also a circle graph (see Figure~\ref{fig:example_model0}). Suppose that we do Indeed, 
We want to find properties that help us decide whether we can give a circle model for $G$ or not. The tent graph admits a unique circle model in the sense that circular ordering of the endpoints of the chords in the model is uniquely determined. This is because the tent is prime with respect to the split decomposition (see~\cite{B87}). Hence, let us begin by considering a circle model for $H$ as the one presented in Figure~\ref{fig:example_tentmodel}. %, having only the chords that represent the subgraph $H$. 
We denote the arcs and chords of a model by their endpoints in clockwise order. For example, in Figure~\ref{fig:example_tentmodel} the arc $k_1^+ k_3^+$ is the portion of the circle that lies between $k_1$ and $k_3$ while traversing the circumference clockwise.

\begin{figure}[h!] 
	\centering
	\includegraphics[scale=.27]{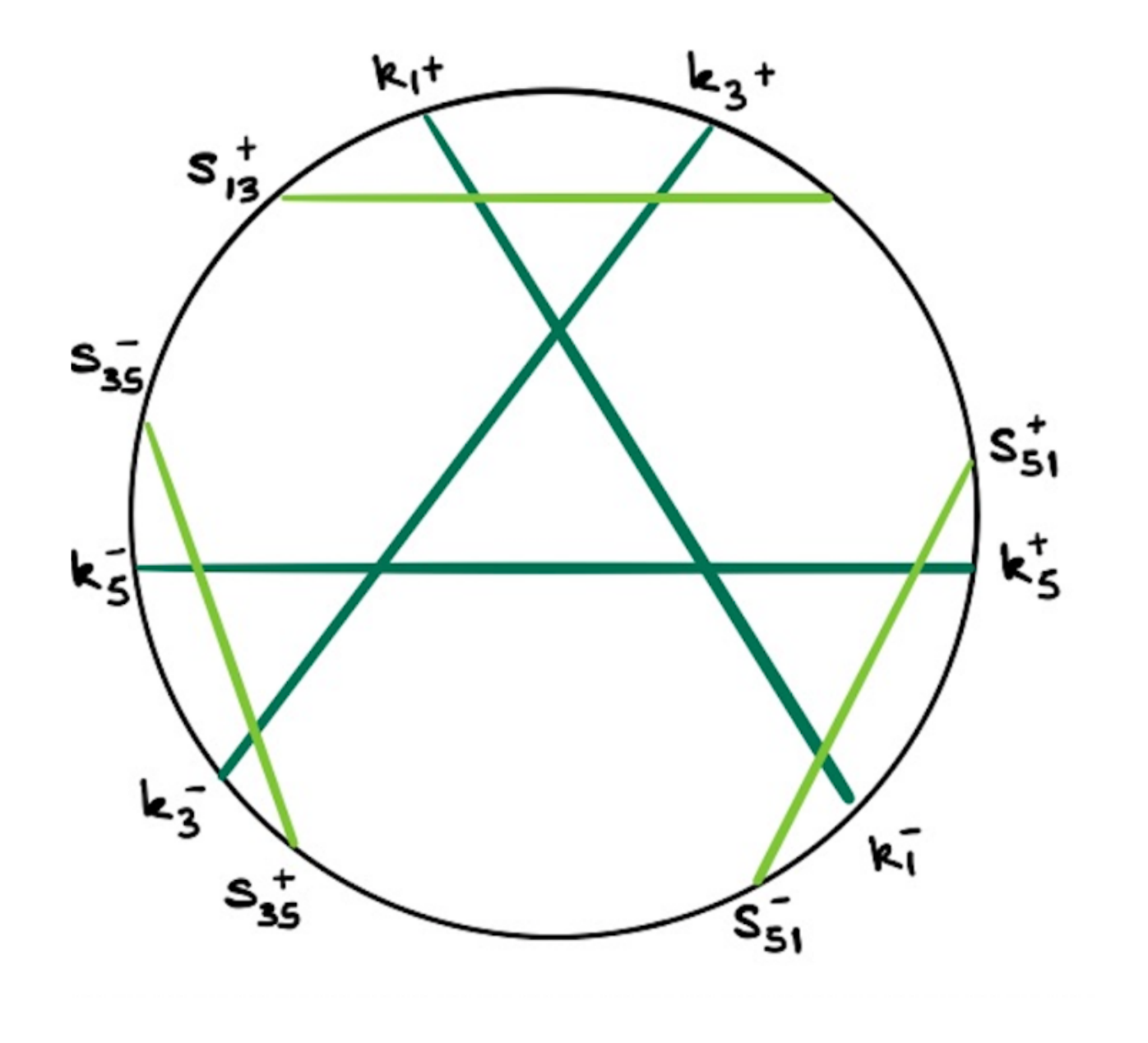}
	\caption{A circle model for the tent graph $H$.}
		\label{fig:example_tentmodel}
\end{figure}

%Since $H$ is a prime graph, let us begin considering the circle model having only those green chords representing $H$. 

In order to place the chords corresponding to each vertex in $S_{22}$, we need to place first the chords that represent every vertex in $K_2$. Notice that a chord re\-pre\-sen\-ting a vertex in $K_2$ has one endpoint between the arc $k_1^+ k_3^+$ and the other endpoint between the arc $s_{51}^- s_{35}^+$ %\todo{pero se nombran los arcos y no los extremos de los arcos, lo cual puede resultar algo confuso. Al menos cambiar el gr\'afico}, 
while a chord representing a vertex in $S_{22}$ has either both endpoints inside the arc $k_1^+ k_3^+$ or both endpoints inside the arc $s_{51}^- s_{35}^+$, always intersecting at least one chord that represents a vertex of $K_2$.
Thus, in order to place the chords corresponding to the vertices of $K_2$ we need to establish a ``good ordering'' for these vertices, this is, one that respects the relationships among the neighbourhoods of the vertices in $S_{22}$. 
For example, since $N(s_1) \subseteq N(s_2)$, it follows that an ordering of the chords in $K_2$ that allows us to give a circle model must contain one of the following four subsequences: $(k_{21} < k_{22} <k_{23})$, $(k_{22}<k_{21}<k_{23})$, $(k_{23}<k_{21}<k_{22})$ or $(k_{23}<k_{22}<k_{21})$. %\todo{``contain one of the following subsequences''} 
Moreover, since $N(s_2) \cap N(s_3) \neq \emptyset$ and $N(s_2)$ and $N(s_3)$ are not nested, then the chords corresponding to $s_2$ and $s_3$ must be drawn in distinct portions of the circle model, for they represent vertices in $S$ and thus the chords cannot intersect. 
The vertex $s_4$ is adjacent only to $k_{21}$, thus $N(s_4)$ is contained in both $N(s_1)$ and $N(s_2)$ and is disjoint with $N(s_3)$. Hence, the chord that represents $s_4$ may be placed indistinctly in any of the two portions of the circle corresponding to the partition $S_{22}$.

Therefore, when considering the placement of the chords, we find ourselves facing two important decisions: (1) in which order should we place the chords corresponding to the vertices in $K_2$ so that it is also possible to draw the chords of those vertices in $S$ adjacent to $K_2$? and (2) in which portion of the circle model should we place both endpoints of the chords corresponding to vertices in $S_{22}$? We give a circle model for $G$ in Figure~\ref{fig:example_model0} %The vertices in $K_2$ must have one endpoint in the arc determined by $k_1$ and $k_3$ and the other endpoint in the arc determined by $s_{51}$ and $s_{35}$, however, the vertices in $S_2$ may have both endpoints in the arc $k_1 k_3$ or may have both endpoints in $s_{51} s_{35}$. 

\begin{comment}
\begin{figure}[h!] 
	\centering
	\includegraphics[scale=1]{example_model0.pdf}
	\caption{A circle model for the split graph $G$.}
		\label{fig:example_model0}
\end{figure}
\end{comment}

Yet in this small example of a split graph that is circle, it becomes evident that there is a property that must hold for every pair of vertices in $S$ that have both of its endpoints placed within the same arc of the circumference. This led to the definition of nested matrices below, which was the first step towards expressing some of these problems regarding the placement of chords for a circle model in terms of certain properties of the adjacency matrix $A(S,K)$ (see Section~\ref{section:basic_defs} for the definition of $A(S,K)$). 

\begin{definition} \label{def:nested_m}
	Let $A$ be a $(0,1)$-matrix. We say $A$ is \emph{nested}~\cite{PDGS18} if it has the consecutive-ones property and every two rows are disjoint or nested.
\end{definition} 

\begin{definition} \label{def:nested_g}
A split graph $G = (K,S)$ is \emph{nested} if and only if $A(S,K)$ is a nested matrix.
\end{definition}

\begin{figure}[h!] 
	\centering  
	\[ 		\small{ \begin{pmatrix}
		 1 1 0 \cr
		0 1 1
		\end{pmatrix}} \]
	\caption{\mbox{The} $0$-gem \mbox{matrix}} \label{fig:forb_nested}
\end{figure}

%\todo{El associated graph se puede quitar de la figura, no? La matriz deberia ser texto, no imagen}
Nested matrices were characterised in~\cite{PDGS18} by a single forbidden matrix. %up to permutations of rows and/or columns}

\begin{theorem}[\cite{PDGS18}] \label{teo:nested_caract}
A $(0,1)$-matrix is nested if and only if it contains no $0$-gem as a submatrix up to permutations of rows and/or columns (see Figure~\ref{fig:forb_nested}) %\todo{evito hablar de ``subconfigurations en este punto''}.
\end{theorem}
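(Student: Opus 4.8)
The plan is to prove both implications by exploiting the fact that, up to permutations of rows and columns, a $0$-gem records precisely a pair of \emph{overlapping} rows. Concretely, a $2\times 3$ submatrix on rows $a_{i.}, a_{k.}$ is a permutation of the $0$-gem of Figure~\ref{fig:forb_nested} exactly when there are three columns $p, q, s$ with $a_{iq}=a_{kq}=1$, with $a_{ip}=1$ and $a_{kp}=0$, and with $a_{is}=0$ and $a_{ks}=1$; that is, a column $q$ where both rows have a $1$ (so $a_{i.}$ and $a_{k.}$ are non-disjoint), a column $p$ where only $a_{i.}$ has a $1$ (so $a_{i.}$ is not contained in $a_{k.}$), and a column $s$ where only $a_{k.}$ has a $1$ (so $a_{k.}$ is not contained in $a_{i.}$). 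Hence $A$ contains a $0$-gem as a submatrix up to permutations if and only if some two rows of $A$ overlap.

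For the necessity of the condition I would argue by contraposition: if $A$ contains a $0$-gem, then by the observation above two of its rows overlap, so they are neither disjoint nor nested, and $A$ fails the second requirement of Definition~\ref{def:nested_m}; thus $A$ is not nested.

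For sufficiency, assume $A$ contains no $0$-gem. The same observation immediately yields that every two rows of $A$ are disjoint or nested, which is the second defining condition of a nested matrix. It remains to establish the consecutive-ones property, and this is the only step that requires genuine work. The key point is that the condition ``every two rows are disjoint or nested'' says exactly that the supports of the rows, viewed as subsets of the set of columns, form a \emph{laminar} family (any two members are disjoint or comparable under inclusion), and a laminar family always admits a linear order of its ground set under which every member is an interval. I would prove this last fact by recursion on the containment forest of the supports: the inclusion-wise maximal supports are pairwise disjoint, so one may concatenate their blocks in any order and append the columns lying in no support; within each maximal support $M$ the supports properly contained in $M$ again form a laminar family on the strictly smaller ground set $M$, which can be ordered recursively, while $M$ itself becomes an interval automatically as a contiguous block. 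Reading off the columns in the resulting order makes the $1$'s of every row consecutive, so $A$ has the C$1$P and is therefore nested.

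The main obstacle is this laminar-to-C$1$P step; everything else is a direct translation between the combinatorics of the $0$-gem and the definition of overlapping rows. A minor point to handle carefully in that step is the treatment of all-zero columns (which are unconstrained and may be placed last) and of distinct rows sharing the same support (which impose identical interval constraints), but neither affects the construction of the ordering.
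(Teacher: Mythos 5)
The paper does not actually prove Theorem~\ref{teo:nested_caract}: it is quoted from~\cite{PDGS18}, so there is no in-paper argument to compare yours against, and your proposal must be judged on its own merits. On those merits it is correct. Your translation ``contains a $0$-gem up to permutations $\Leftrightarrow$ some two rows overlap'' is exactly right (the three column types $(1,0)$, $(1,1)$, $(0,1)$ witness non-disjointness and failure of containment in both directions), and it settles the forward implication as well as the disjoint-or-nested half of the backward one. The only substantive step, as you say, is deducing the C$1$P, and reducing it to the standard fact that a laminar family of column sets admits a linear order of the ground set making every member an interval is a sound and complete way to finish: your recursive construction (concatenate the pairwise disjoint inclusion-wise maximal supports as blocks, recurse inside each block, place columns belonging to no support at the end) does produce a consecutive-ones ordering, and your remarks about all-zero columns and repeated supports cover the degenerate cases. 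One small imprecision worth fixing: when a maximal support $M$ equals the whole column set (a row of all $1$'s), the ground set passed to the recursive call is \emph{not} strictly smaller, so ``recursion on the ground set size'' is not quite the right measure; the recursion still terminates because the family of supports properly contained in $M$ is strictly smaller (equivalently, induct on the size of the family or on the depth of the containment forest). This is a matter of stating the correct induction measure, not a gap in the argument.
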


%\begin{proof}
%Since no Tucker matrix has the C$1$P and the rows of the $0$-gem are neither disjoint nor nested, no nested matrix contains a Tucker matrix or a $0$-gem as a subconfiguration. Con\-verse\-ly, as each %Tucker matrix contains a $0$-gem as a submatrix up to permutations of rows and/or columns}, every matrix containing no $0$-gem as a submatrix up to permutations of rows and/or columns} is a nested matrix.
%\end{proof}

Let us consider the matrix $A(S_{22},K_2)$ corresponding to the graph $G$ given in Figure~\ref{fig:example_graph0}. In this case, the rows are given by $s_1$, $s_2$, $s_3$ and $s_4$, and the columns are $k_{21}$, $k_{22}$, $k_{23}$ and $k_{24}$.
\vspace{-.5mm}
\[
A(S_{22},K_2) = \begin{pmatrix}
	1 1 0 0 \\
	1 1 1 0 \\
	0 1 1 0 \\
	1 0 0 0 
\end{pmatrix}
\]

Notice that the C$1$P for the matrix $A(S_{22},K_2)$ %\todo{Cambi\'e $A(S,K_2)$ por $A(S_2,K_2)$ (y lo mismo con $S_i$ en muchos lados} 
is a necessary condition to obtain an ordering of the vertices of $K_2$ that is compatible with the partial ordering given by containment for the vertices in $S_{22}$. 
Moreover, if the matrix $A(S_{22},K_2)$ is nested, then any two vertices in $S_{22}$ are either nested or disjoint. In other words, if $A(S_{22},K_2)$ is nested, then we can draw every chord corresponding to a vertex in $S\setminus V(H)$ in the same arc of the circumference. However, this is not the case in the previous example, for the vertices $s_1$ and $s_3$ are neither disjoint nor nested and thus they cannot be drawn in the same portion of the circle model. Equivalently, $A(S,K)$ is not a nested matrix. Since we have shown a circle model for $G$, it follows that the ``nestedness'' of $A(S,K)$ is not enough to determine whether or not there is a circle model for a given split graph $G=(K,S)$.

\begin{figure}[h!] 	
	\centering
	 \begin{subfigure}[b]{0.34\textwidth}
		\includegraphics[width=\textwidth]{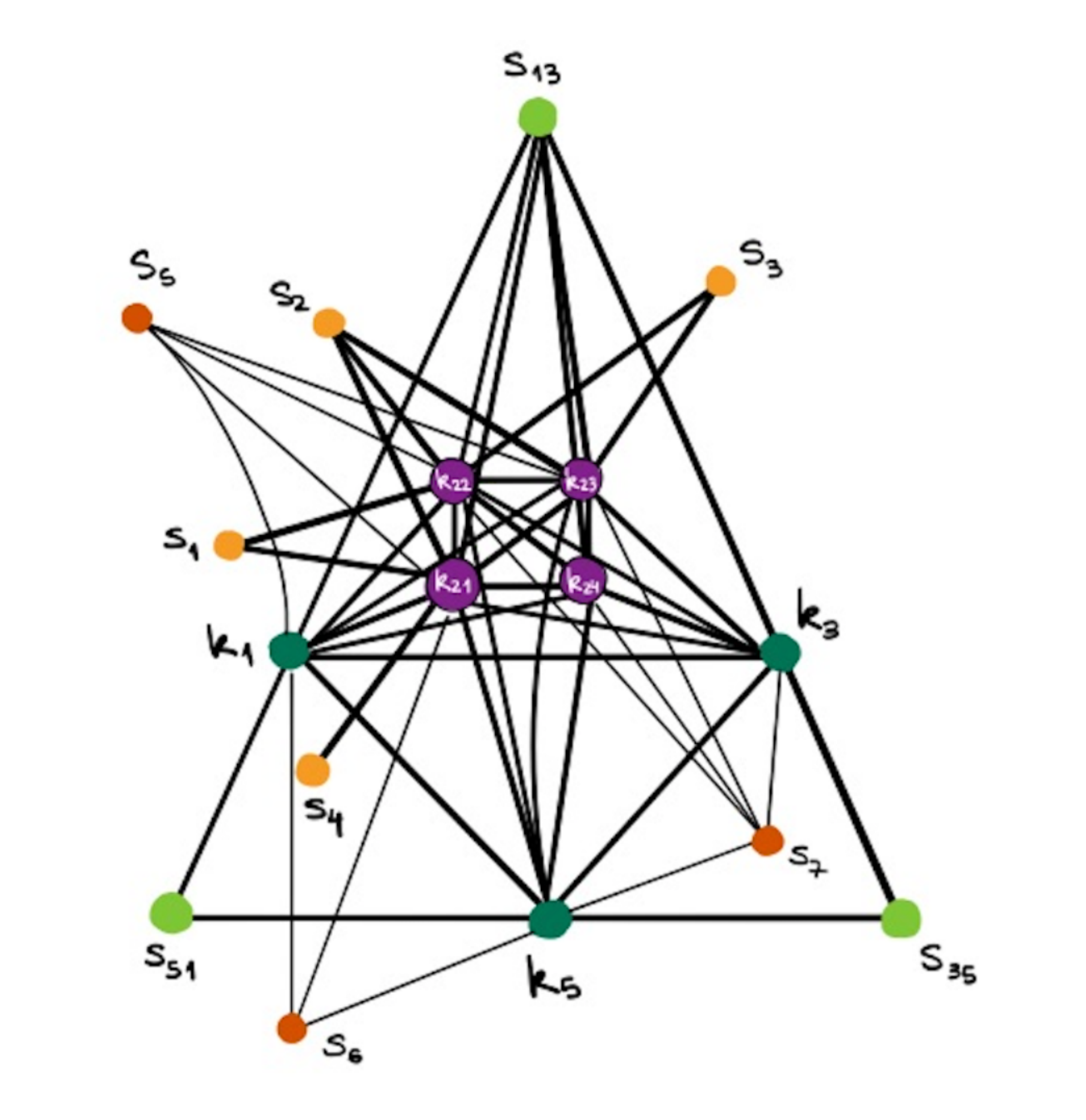}
		\caption{A split circle graph $G'$.}
		\label{fig:example_graph1}
	\end{subfigure}
	 \begin{subfigure}[b]{0.35\textwidth}
		\includegraphics[width=\textwidth]{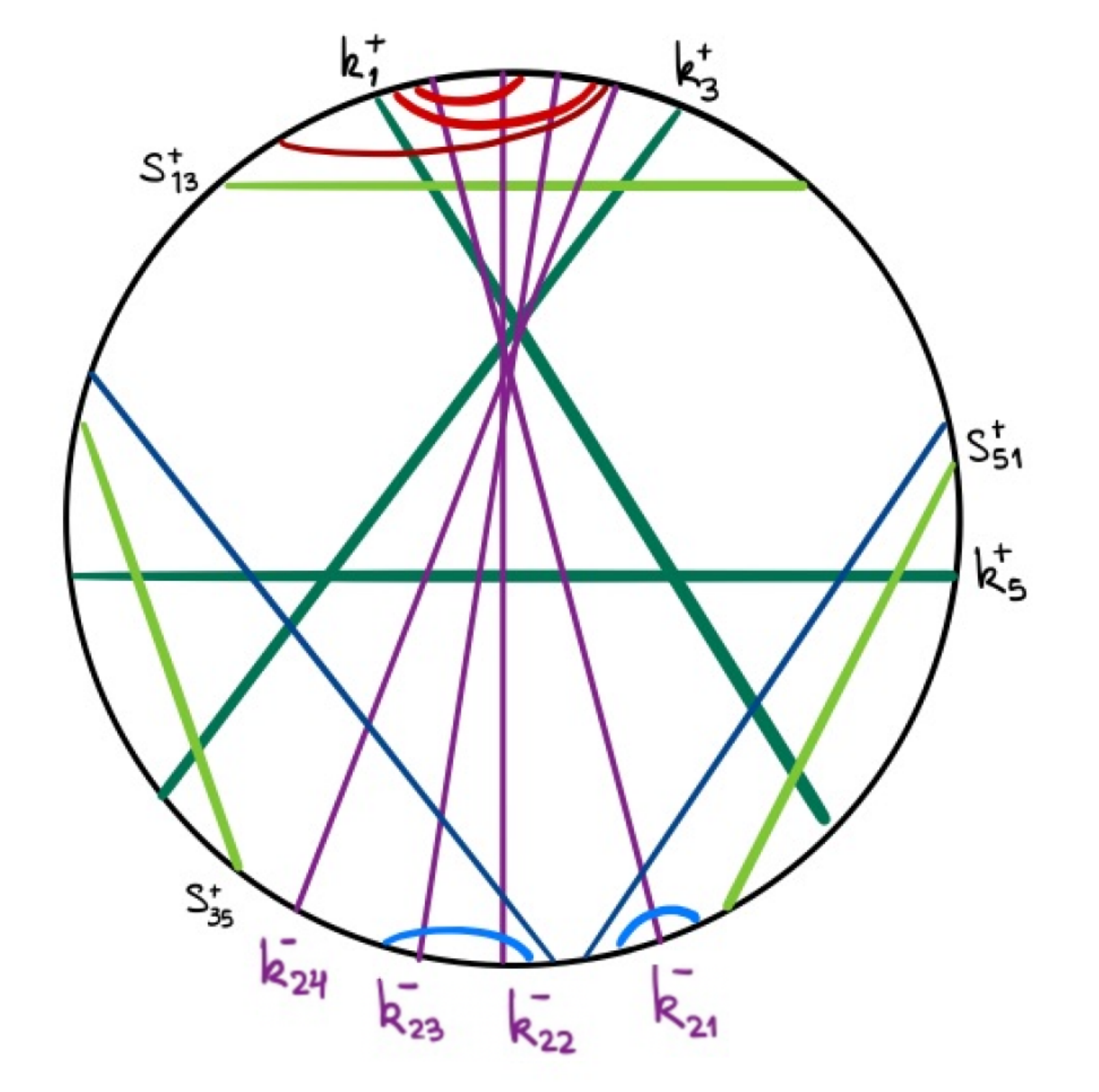}
		\caption{A circle model for $G'$.}
		\label{fig:example_model1}
	\end{subfigure}
	\caption{Graph and circle model of Example $2$.}
\end{figure}
Let us see one more example. Consider $G'$ to be the split graph depicted in Figure~\ref{fig:example_graph1}. %the circle model given in Figure~\ref{fig:example_model1}. 
Notice that $G'$ arises from $G$ by adding three new vertices.
%In this case, we have added three new chords corresponding independent vertices: let $s_5$ be the vertex corresponding the chord that intersects $k_1$, $k_{21}$, $k_{22}$ and $k_{23}$, $s_6$ be the vertex corresponding the chord that intersects $k_5$, $k_1$ and $k_{21}$ and $s_7$ be  the vertex corresponding the chord that intersects $k_{22}$, $k_{23}$, $k_{24}$, $k_3$ and $k_5$. Hence, $s_5$ lies in $S_{12}$, $s_6$ lies in $S_{52}$ and $s_7$ in $S_{25}$.
Unlike what happens with the chords corresponding to $s_1$, $s_2$, $s_3$ and $s_4$, the chords that represent the new vertices $s_5$, $s_6$ and $s_7$ have only one of its endpoints in the arcs corresponding to the area of the circle designated for $K_2$, this is, in the arcs $k^+_1 k^+_3$ and $s_{51}^- s_{35}^+$. Furthermore, each of these new vertices has a unique possible placement %\todo{``unique possible placement''} 
for each endpoint of their corresponding chord --meaning that there exists a unique arc of the circumference in which we can place the endpoint. Let us consider $S' =S\setminus V(H)= \{ s_1, \ldots, s_7 \}$.
If we consider the rows given by the vertices in $S'$ and the columns given by $k_{21}, \ldots, k_{24}$, then the adjacency matrix $A(S',K_2)$ in this example is as follows:
\vspace{-.5mm}
\[
A(S',K_2) =\scriptsize{ \begin{pmatrix}
	1 1 0 0 \\
	1 1 1 0 \\
	0 1 1 0 \\
	1 0 0 0 \\
	1 1 1 0 \\
	1 0 0 0 \\
	0 1 1 1 
\end{pmatrix}}
\]

As in the previous example, $A(S',K_2)$ is not a nested matrix. Also notice that $s_5$, $s_6$ and $s_7$ all are adjacent to at least one vertex in $K \setminus K_2$. % In particular, in this example, there are adjacencies in the complete partition of $G'$ of some of the independent vertices $s_1, \ldots, s_7$ that are adjacent to $K_2$ that are not considered in this matrix.
Let us concentrate in the placement of the endpoints of the chords corresponding to $s_5$, $s_6$ and $s_7$ that lie between the arcs $k_1 k_3$ and $s_{51} s_{35}$. Notice that the ``nested or disjoint'' property must still hold, and not only for those vertices in $K_2$. More precisely, since $s_5$ is adjacent to $k_{24}$, $k_{23}$ and $k_1$, while $s_1$ is nonadjacent to $k_1$ and adjacent to $k_{23}$ and $k_{24}$, then necessarily $s_1$ must be contained in $s_5$. Something similar occurs with $s_7$ and $s_3$, whereas $s_6$ and $s_3$ are disjoint. 

There is one situation in this example that did not occur in Example $1$. Since $s_6$ is adjacent to $k_{21}$, $k_1$ and $k_5$, the chord corresponding to the vertex $k_{21}$ is forced to be placed first within every chord corresponding to $K_2$. This follows from the fact that a chord that represents $s_6$ must have one of its endpoints placed inside the arc $s_{51}^- s_{35}^+$, for we need $k_{21}$ to be the first chord of $K_2$ that comes right after $s_{51}$. Moreover, this is once again confirmed by the fact that $s_5$ is adjacent to $k_1$ and $k_{21}$, thus the chord corresponding to the vertex $k_{21}$ must be drawn first when considering the ordering given by the neighbourhoods of those vertices in $S$ that have at least one endpoint lying in $k_1^+ k_3^+$. It follows that $k_{21}$ being the first vertex in the ordering is a necessary condition when searching for a consecutive-ones ordering for the matrix $A(S',K_2)$. See Figure~\ref{fig:example_model1}, where we give a circle model for the graph $G'$.

\begin{comment}
\begin{figure}[h!] 	
	\centering
	\includegraphics[scale=1]{example_model1.pdf}
	\caption{A circle model for the split graph $G'$.}
	\label{fig:example_model1}
\end{figure}
\end{comment}

The previously described situations must also hold for each set $K_i$. For each $K_i\subseteq K$, there are exactly two arcs of the circumference in which we can place the endpoints of a chord corresponding to a vertex in $K_i$. Moreover, since these vertices lie in $K$, such a chord must have precisely one endpoint on each of these arcs. We denote these two arcs as $K_i^+$ and $K_i^-$. 
%\todo{Ac\'a se deber\'ia explicar lo m\'as brevemente posible lo de las ``two portions'', ``arcs'', $K_i^+$, $K_i^-$... } 

We translate the problem of giving a circle model to the fullfilment of some properties for each of the matrices $A(S_i,K_i)$, where $\{K_i\}$ is a partition of $K$ depending on some induced subgraph $H$ isomorphic to tent, $4$-tent or co-$4$-tent, as described in the introduction.
In order to express the corresponding properties in matrix terms, we define enriched matrices. %, which allow us to model not only some of the above mentioned properties, but also others that arose when considering split graphs containing a $4$-tent and a co-$4$-tent.

\begin{definition} \label{def:enriched_matrix}
	An \emph{enriched matrix} $A$ is a $(0,1)$-matrix together with an assignment of labels and  colors to some (possibly none) of its rows such that all the following assertions hold:
%	Let $A$ be a $(0,1)$-matrix. We say $A$ is an \emph{enriched matrix} if all of the following conditions hold:
	\begin{enumerate}
		\item Each row of $A$ is either unlabeled or labeled with one of the following labels: L or R or LR. We say that a row is an \emph{LR-row  (resp.\ L-row, R-row)} if it is labeled with LR (resp.\ L, R).
		\item Each row of $A$ is either uncolored or colored with either blue or red.
			\item The only rows that might be colored are those labeled with L or R, and those empty LR-rows. % having a $0$ in every column.
		\item All the empty LR-rows are colored with the same color. 
	\end{enumerate}
	
	The \emph{underlying matrix of an enriched matrix $A$} is the $(0,1)$-matrix with the same entries as $A$ but that has neither labeled nor colored rows.
\end{definition}

We denote the color assignment for a row with a colored bullet at the right side of the row.
The color assignment for some of the rows represents in which arc of the circle corresponding to $K_i$ we must draw one or both endpoints when considering the placement of the chords. Some of the vertices in $S$ have a unique possible placement for the endpoints of its chords %\todo{``unique possible placement''}
, and some of them can \emph{a priori} be drawn in either two of the arcs $K_i^+$ and $K_i^-$. %corresponding to $K_i$  \todo{``corresponding to $K_i$''}
Moreover, the labeling of the rows indicates ``from which direction does the chord come from'' if we are standing in a particular portion of the circle. 
For example, the following is the matrix $A(S',K_2)$ for the circle model represented in Figure~\ref{fig:example_model1} turned into an enriched matrix by taking into account all the information on the placement of the chords in the model: %the row in $A(S,K_2)$ that represents the vertex $s_5$ will be colored with red and labeled with L, the row that represents the vertex $s_7$ will be colored with blue and labeled with R and the row that represents the vertex $s_1$ will be uncolored and unlabeled.

\vspace{-5mm}
\[
A(S',K_2) = 
\scriptsize{ \bordermatrix{ &  \cr
	& 1 1 0 0 \cr
	& 1 1 1 0 \cr
	& 0 1 1 0 \cr
	& 1 0 0 0 \cr
	\textbf{L} & 1 1 1 0 \cr
	\textbf{L} & 1 0 0 0 \cr
	\textbf{R} & 0 1 1 1 }\,
	\begin{matrix} 
   \cr  \cr \cr  \cr \textcolor{dark-red}{\bullet} \cr \textcolor{blue}{\bullet} \cr \textcolor{blue}{\bullet} 
\end{matrix}}
\]
\vspace{.5mm}

In this example, the rows of the enriched matrix are $s_1, \ldots, s_7$, in that order. Notice that $s_5$ is adjacent to $k_1 \in K_1$ and some vertices in $K_2$, thus $s_5$ lies in $S_{12}$. The chords of the vertices in $S_{12}$ can only have its endpoints placed in the arcs $s_{13}^+ k_1^+$ and $k_1^+ k_3^+$. More precisely, in terms of intersecting the chords of $K_2$, those vertices in $S_{12}$ have one of its endpoints placed in $K_2^+$. Something similar happens with the vertex $s_6 \in S_{52}$, whose endpoint intersecting chords of $K_2$ must be placed somewhere in the arc $K_2^-$.
Hence, we need to assign distinct colors to this rows in order to denote in which of $K_2^+$ or $K_2^-$ we are able to draw the endpoint of the chord that should intersect its neighbours in $K_2$.
Furthermore, notice that both rows are labeled with L in $A(S',K)$. If we stand in $K_2^+$ and consider from which direction should the chord corresponding to $s_5$ come from, then we notice that it should come from the left. The same holds for $s_6$ if we stand in $K_2^-$. On the other hand, the chord corresponding to $s_7$ should come from the right, and this is why these three rows are labeled with L or R.

%\todo{Ac\'a agregar justificaci\'on sobre los labels y los colores de este ejemplo en particular}

\begin{definition} \label{def:LR-orderable}
	Let $A$ be an enriched matrix. We say $A$ is \emph{LR-orderable} if there is a linear ordering $\Pi$ for the columns of $A$ such that each of the following assertions holds:
	\begin{itemize}
	    \item $\Pi$ is a consecutive-ones ordering for every non-LR-row of $A$.
	    
		\item The ordering $\Pi$ is such that the $1$'s in every nonempty row labeled with L (resp.\ R) start in the first column (resp.\ end in the last column). %labeled row the rows labeled with L start at the first column and those labeled with R end at the last column.
		
		\item $\Pi$ is a consecutive-ones ordering for the complements of every LR-row of $A$.
     \end{itemize} 
Such an ordering is called an \emph{LR-ordering}. For each row of $A$ labeled with L or LR and having a $1$ in the first column of $\Pi$, we define its \emph{L-block (with respect to $\Pi$)} as the maximal set of consecutive columns of $\Pi$ on which the row has a 1 starting in the first column. \emph{R-blocks} are defined on an entirely analogous way.
For each unlabeled row of $A$, we say its \emph{U-block (with respect to $\Pi$)} is the set of columns having a $1$ in the row.
The blocks of $A$ with respect to $\Pi$ are its L-blocks, its R-blocks and its U-blocks. 
\end{definition} 

\begin{definition} \label{def:block-bicoloring}
	Let $A$ be an enriched matrix with an LR-ordering. %\todo{Ac\'a se est\'a asumiendo un LR-ordering, no?}
We say an \emph{L-block (resp.\ R-block, U-block) is colored} if there is a color assignment for every entry of the block. %\todo{? (ver)} %such that they are all colored with the same color.
	
	A \emph{basic bi-coloring for the blocks of $A$} is a color assignment with either red or blue for some L-blocks, U-blocks and R-blocks of $A$.
		A basic bi-coloring is \emph{total }if every L-block, R-block and U-block of $A$ is colored, and is \emph{partial }if otherwise.
\end{definition}

Recall that, according to the definition, in an enriched matrix, the only rows that might be colored are those labeled with L or R and those empty LR-rows. Moreover, in an LR-ordering, every row labeled with L (resp.\ R) starts in the first column (resp.\ ends in the last column), and the $1$'s in each row appear consecutively.  
Thus, if an enriched matrix is also LR-orderable, then the given coloring induces a partial basic bi-coloring (see Figure~\ref{fig:example_LR-ord_a}) %\todo{Resulta confuso al ver la figura porque block bi-coloring no est\'a todav\'ia definido}
, in which every empty LR-row remains unchanged, whereas for every nonempty colored labeled row, we color all its $1$'s with the color given in the definition of the matrix.
%\todo{Poner etiquetas (a), (b), (c) a las subfiguras y hacer referencia a subfiguras}

\begin{figure}[h!]
    \centering
	\begin{subfigure}[b]{0.28\textwidth}
		\begin{align*}
			A = \scriptsize{ \bordermatrix{ &  \cr
			\textbf{LR} & 1  0  0  0  1 \cr
			\textbf{LR} & 1  1  0  0  1 \cr
							& 0  1 1  0  0 \cr
			\textbf L & \textcolor{dark-red}{1}  \textcolor{dark-red}{1}  \textcolor{dark-red}{1}  0  0 \cr
			\textbf{LR} & 0  0  0   0  0 \cr
			\textbf R & 0  0  \textcolor{blue}{1} \textcolor{blue}{1} \textcolor{blue}{1} }\,
			\begin{matrix} 
		   \cr  \cr  \cr \textcolor{dark-red}{\bullet} \cr \textcolor{blue}{\bullet} \cr \textcolor{blue}{\bullet} 
		\end{matrix}}
		\end{align*}
		\caption{An enriched LR-orderable matrix $A$.} %(where the column ordering from left to right is an LR-ordering}}
		\label{fig:example_LR-ord_a}
	\end{subfigure}
	\begin{subfigure}[b]{0.28\textwidth}
		\begin{align*}
			A' = \scriptsize{ \bordermatrix{  &  \cr
			\textbf{LR} &  \textcolor{blue}{1}  0  0  0  \textcolor{dark-red}{1} \cr
			\textbf{LR} &  \textcolor{blue}{1}   \textcolor{blue}{1}  0  0   \textcolor{dark-red}{1} \cr
							 & 0  \textcolor{dark-red}{1}  \textcolor{dark-red}{1}  0  0 \cr
			\textbf L &  \textcolor{dark-red}{1}   \textcolor{dark-red}{1}   \textcolor{dark-red}{1}  0  0 \cr
			\textbf{LR} &  0   0   0   0   0 \cr
			\textbf R & 0  0  \textcolor{blue}{1}  \textcolor{blue}{1}  \textcolor{blue}{1} } \,
			\begin{matrix}
			\\ \\  \\  \textcolor{dark-red}{\bullet} \\ \textcolor{blue}{\bullet} \\ \textcolor{blue}{\bullet} \\
			\end{matrix}}
		\end{align*}
		\caption{A total block bi-coloring of the blocks of $A$.} 
		\label{fig:example_LR-ord_b}
	\end{subfigure}
	\begin{subfigure}[b]{0.28\textwidth}
		\begin{align*}
			B = \scriptsize{ \bordermatrix{ & \cr
			\textbf{LR} & 1  0  1  0 1 \cr
			\textbf L & \textcolor{dark-red}{1}  \textcolor{dark-red}{1}  0  0  0 \cr
			\textbf R & 0  0  0  \textcolor{blue}{1}  \textcolor{blue}{1} \cr
						& 0  0  1  1  0 }\,
			\begin{matrix} 
		   \cr  \textcolor{dark-red}{\bullet} \cr \textcolor{blue}{\bullet} \cr \cr
		\end{matrix}}
		\end{align*}
		\caption{An enriched non-LR-orderable matrix $B$.} 
		\label{fig:example_LR-ord_c}
	\end{subfigure}
%\caption{Examples of an enriched LR-orderable matrix $A$, (where the column ordering from left to right is an LR-ordering}). A} total block bi-coloring $A'$ of the blocks of $A$. %block bi-coloring?
%$B$ is an enriched non-LR-orderable matrix and $A$ is $2$-nested.} 
\caption{Examples of enriched LR-orderable and non-LR-orderable matrices.}
\label{fig:example_LR-ord} %tambien son admisibles eh?
\end{figure}

We now define $2$-nested matrices, which allow us to solve both the problem of ordering the columns in each adjacency matrix $A(S_i,K_i)$ of a split graph for each set $K_i$, and the problem of deciding if there is a feasible distribution of the vertices in $S_i$ between the two arcs $K_i^+$ and $K_i^-$. %\todo{``two portions''} 
This allows to obtain a circle model for the given graph. We give a complete characterization of $2$-nested matrices by forbidden subconfigurations at the end of Section~\ref{section:2nested_matrices}. 

\begin{definition} \label{def:2-nested}
	Let $A$ be an enriched matrix. We say $A$ is \emph{$2$-nested} if there exists an LR-ordering $\Pi$ of $A$ and an assignment of colors red or blue to the blocks of $A$ (with respect to $\Pi$) such that all of the following conditions hold:
\begin{enumerate}
	\item If an LR-row has an L-block and an R-block, then they are colored with distinct colors. \label{item:2nested1}
	\item For each nonempty colored row $r$ in $A$, its only block is colored with the same color as $r$ in $A$. \label{item:2nested2}
	\item If an L-block of an LR-row is properly contained in the L-block of an L-row, then both blocks are colored with different colors. \label{item:2nested3}
	%\item Each two L-blocks are nested. Each two R-blocks are nested.
	\item Every L-block of an LR-row and any R-block are disjoint. The same holds for an R-block of an LR-row and any L-block. \label{item:2nested4} %D5
	%D0, D2, D3, D6 D7 D8 D11 son por LR-ordering
	\item If an L-block and an R-block are not disjoint, then they are colored with distinct colors.	\label{item:2nested5} %If an L-block of a non-LR-row and an R-block of a non-LR-row are not disjoint, then they are colored with distinct colors.	
	\item Each two U-blocks colored with the same color are either disjoint or nested. \label{item:2nested6}
	\item If an L-block and a U-block are colored with the same color, then either they are disjoint or the U-block is contained in the L-block. The same holds replacing L-block for R-block. \label{item:2nested7}
	\item If two distinct L-blocks of non-LR-rows are colored with distinct colors, then every LR-row has an L-block. The same holds replacing L-block for R-block.  \label{item:2nested8} %D4 es esto mas la cond 3.
%	\item If two L-blocks of LR-rows are colored with the same color, then the LR-rows are nested. The same holds for the R-blocks. %????D9 D10 D12 
	\item If two LR-rows overlap, then the L-block of one and the R-block of the other are colored with the same color. \label{item:2nested9}
	
\end{enumerate}	

An assignment of colors red and blue to the blocks of $A$ that satisfies all these properties is called a \emph{total block bi-coloring}. %\todo{El total con par\'entesis es ambiguo}
\end{definition}

\begin{remark}
We now give some insight on what properties are behind each assertion in Definition~\ref{def:2-nested}. All these properties are necessary conditions for each matrix $A(S_i,K_i)$, in order to give a circle model for any split graph containing a tent, $4$-tent or co-$4$-tent.

The LR-rows represent those independent vertices whose chords have its endpoints placed inside distinct arcs corresponding to $K_i$. %, one on each portion \todo{``arcs'', ``portions''}. 
More precisely, the difference between an LR-row and an unlabeled row, is that one endpoint of the chord corresponding to an LR-row must be placed in $K_i^+$ and the other in $K_i^-$, whereas for an unlabeled row, either both endpoints lie in $K_i^+$ or in $K_i^-$.
%one of the arcs corresponding to $K_i$ and the other endpoint must be placed in the other arc corresponding to $K_i$. 
Hence, assertion~\ref{item:2nested1} of Definition~\ref{def:2-nested} ensures that, when deciding where to place the chord corresponding to an LR-row, if the ordering indicates that the chord intersects some of its adjacent vertices in one arc and the other in the other arc, then the distinct blocks corresponding to the row must be colored with distinct colors. 

With assertion~\ref{item:2nested2} of Definition~\ref{def:2-nested}, we ensure that the colors that are pre-assigned remain unchanged, since they correspond to vertices in $S$ whose chords admit a placement of the endpoint corresponding to $K_i$ in either $K_i^+$ or $K_i^-$ and not in both indistinctly. %\todo{``unique possible placement''}.

The third property refers to the ordering given by containment for the vertices. In~\cite{BDPS20,P20}, we see that every LR-row represents a vertex that is adjacent to almost every vertex in the complete partition $K$ of $G$. Hence, when dividing the LR-rows into blocks, we need to ensure that each of its block is not properly contained in the neighbourhoods of vertices that are nonadjacent to at least one partition of $K$. Something similar holds for L-rows (resp.\ R-rows) and U-rows, and L-rows (resp.\ R-rows) and LR-rows. This property is reflected in assertions~\ref{item:2nested7} and~\ref{item:2nested8}.

Assertions~\ref{item:2nested4},~\ref{item:2nested5},~\ref{item:2nested6} and~\ref{item:2nested9} of the definition refer to the previously discussed ``nested or disjoint'' property that we need to ensure in order to give a circle model for $G$.
\end{remark}

%\subsection{A characterization for $2$-nested matrices} \label{section:defs_para2nested}

\section{Characterization by forbidden subconfigurations} \label{section:2nested_matrices}

In this section, we begin giving some definitions that are necessary to state Theorem~\ref{teo:2-nested_charact}, which is presented at the end of this section and is the main result of this work. In Section~\ref{subsec:admissibility} we define and characterise admissible matrices, which give necessary conditions for a matrix to admit a total block bi-coloring. In Section~\ref{subsec:part2nested} we define and characterise LR-orderable and partially $2$-nested matrices, and then we prove some properties of LR-orderings in admissible matrices. Finally, in Section~\ref{subsec:teo_2nestedmatrices} we give the proof of Theorem~\ref{teo:2-nested_charact}, which characterises $2$-nested matrices by forbidden subconfigurations.

\begin{definition}\label{def:subconfiguration}
Let $A$ and $B$ be enriched matrices. We say that \emph{$B$ is a subconfiguration of $A$} if $B$ equals some submatrix of $A$ up to permutations of rows and/or columns and such that the labels and colors remain the same.
Given a subset of rows $R$ (resp.\ of columns $C$) of $A$, we say that \emph{$R$ (resp.\ $C$) induces a matrix $B$} if $B$ is a subconfiguration of the submatrix of $A$ formed by the rows in $R$ (resp.\ the columns in $C$).

Let $\mathcal{F}$ be a family of enriched matrices. We say that $A$ is $\mathcal{F}$-free if $A$ contains no $F$ as a subconfiguration, for every $F \in \mathcal{F}$.
\end{definition}

%\todo{Por ac\'a definir ``subconfigurations'' dentro de un entorno Definition.  Definir tambi\'en $\mathcal F$-free donde $\mathcal F$ es una familia de enriched matrices}
The $0$-gem, $1$-gem and $2$-gem are the following enriched matrices:
\[ 		\scriptsize{ \begin{pmatrix}
		 1 1 0 \cr
		0 1 1
		\end{pmatrix}, \qquad
		\bordermatrix{ & \cr
		 & 1 0 \cr
		 & 1 1 }\ , \qquad
		\bordermatrix{ & \cr
		\textbf{LR} & 1 1 0 \cr
		\textbf{LR} & 1 0 1  }\ } \]
respectively.

%Notice that a $2$-gem is induced by two non-nested LR-rows.

\begin{definition}  \label{def:gems}
	Let $A$ be an enriched matrix. We say that $A$ \emph{contains a gem} (resp.\ \emph{doubly-weak gem}) if it contains a $0$-gem (resp.\ a $2$-gem) as a subconfiguration.
	We say that $A$ \emph{contains a weak gem} if it contains a $1$-gem such that, either the first 
	corresponds to an L-row (resp.\ R-row) of $A$ and the second 
	corresponds to a U-row of $A$, or the first corresponds to an LR-row of $A$ and the second corresponds to a non-LR-row of $A$.
	We say that a $2$-gem is \emph{badly-colored} if the entries in the corresponding column of $A$ in which both rows have a $1$ are in blocks colored with the same color.
\end{definition}

\begin{definition} \label{def:dual}
Let $A$ be an enriched matrix. The dual matrix of $A$ is defined as the enriched matrix $\tilde{A}$ that has the same underlying matrix as $A$ and for which every row of $A$ that is labeled with L (resp.\ R) is now labeled with R (resp.\ L) and every other row remains the same. Also, the color or lack of color assignment of each row remains as in $A$.
\end{definition}

In Figures~\ref{fig:forb_D},~\ref{fig:forb_F},~\ref{fig:forb_S},~\ref{fig:forb_P} and~\ref{fig:forb_LR-orderable} we define some special matrices that play an important role in the sequel. 
We will use green and orange to represent either red and blue or blue and red, respectively. For every enriched matrix represented in the figures of this section, if a row labeled with L or R appears in black, then it may be colored with either red or blue indistinctly.
Also, if a row is labeled with ``\textbf{L (LR)}'' (resp.\ ``\textbf{R (LR)}''), then such a row is a row labeled with either L or LR (resp.\ R or LR) indistinctly and independently from one another. %\todo{and independently from one another?}.

%%%%%%%%%%%%%%%%%%%%%%         matrices M chiquitas        %%%%%%%%%%%%%%%%%%%
\vspace{-7mm}
\begin{figure}[H]
    \centering
    \footnotesize{
    \begin{align*}
            M_0 = \scriptsize{ \bordermatrix{ & \cr
                         & 1 0 1 1 \cr
                         & 1 1 1 0  \cr
                         & 0 1 1 1 }\ }
            &&
            M_{II}(4) = \scriptsize{ \bordermatrix{ & \cr
                         & 0 1 1 1 \cr
                         & 1 1 0 0  \cr
                         & 0 1 1 0 \cr
                         & 1 1 0 1 }\ }
            &&
            M_V = \scriptsize{ \bordermatrix{ & \cr
                         & 1 1 0 0 0 \cr
                         & 0 0 1 1 0  \cr
                         & 1 1 1 1 0 \cr
                         & 1 0 0 1 1 }\ }
            &&
            S_0(k)&= \scriptsize{ \begin{pmatrix}
                1 11...11\\
                110...00\\
                011...00\\
                .   .   .   .   . \\
                .   .   .   .   . \\
                .   .   .   .   . \\
                000...11\\
                100...01\\
            \end{pmatrix} }
    \end{align*}}
    \caption{The matrices $M_0$, $M_{II}(4)$, $M_V$ and $S_0(k) \in \{0,1\}^{((k+1)\times k}$ for any even $k \geq 4$.}
    \label{fig:forb_M_chiquitas}
    \end{figure}

%%%%%%%%%%%%%%%%%%%%%%         matrices D         %%%%%%%%%%%%%%%%%%%

\begin{figure}[H]
    \centering
    \includegraphics[scale=.33]{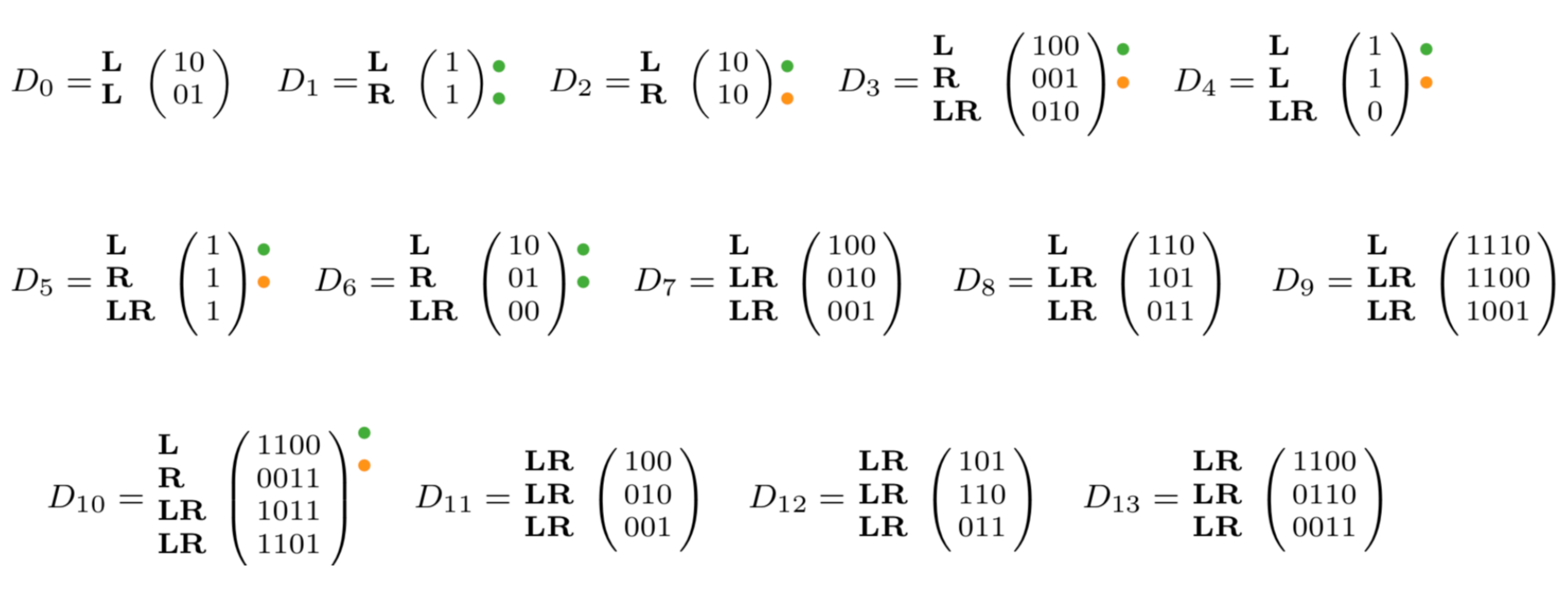}
    \caption{The family of enriched matrices $\mathcal{D}$.}
    \label{fig:forb_D}
\end{figure}

\begin{figure}[H]
\centering
\footnotesize{
    \begin{align*}
            F_0= \scriptsize{  \begin{pmatrix}
                11100\\
                01110\\
                00111\\
            \end{pmatrix} }
            &&
            F_1(k)= \scriptsize{ \begin{pmatrix}
                011...111\\
                111...110\\
                000...011\\
                000...110\\
                .   .   .   .   . \\
                .   .   .   .   . \\
                .   .   .   .   . \\
                110...000\\
            \end{pmatrix} }
            &&
            F_2(k)= \scriptsize{  \begin{pmatrix}
                0111...10\\
                1100...00\\
                0110...00\\
                .   .   .   .   . \\
                .   .   .   .   . \\
                .   .   .   .   . \\
                0000...11\\
            \end{pmatrix}  }
            &&
              F'_0=  \scriptsize{ \bordermatrix{ & \cr
                \textbf{L (LR)} & 1 1 0 0 \cr
                 &1 1 1 0 \cr
                &0 1 1 1 }\ }
            \end{align*}
            \begin{align*}
            F''_0= \scriptsize{ \bordermatrix{ & \cr
                \textbf{L} & 1 1 0 \cr
                 &1 1 1 \cr
                \textbf{R} &0 1 1 }\     }
            &&
            F'_1(k)= \scriptsize{  \bordermatrix{ & \cr
                &11\ldots1111\cr
                \textbf{L (LR)}&11\ldots1110\cr
                &00\ldots0011\cr
                &00\ldots0110\cr
                & \iddots \cr
                \textbf{L (LR)}&10 \ldots 0000 }\ }
            &&
            F'_2(k)= \scriptsize{ \bordermatrix{ & \cr
                &111\ldots10\cr
                \textbf{L (LR)}&100\ldots00\cr
                &110\ldots00\cr
                &  \ddots \cr
                &000\ldots11 }\ }
        \end{align*}}
        \vspace{-2mm}
    \caption{The enriched matrices of the family $\mathcal{F}$.}  \label{fig:forb_F}
\end{figure}

The matrices $\mathcal{F}$ represented in Figure~\ref{fig:forb_F} are defined as follows:
$F_1(k) \in \{0,1\}^{k \times (k-1)}$, $F_2(k) \in \{0,1\}^{k \times k}$, $F'_1(k) \in \{0,1\}^{k \times (k-2)}$ and $F'_2(k) \in \{0,1\}^{k \times (k-1)}$, for every odd $k \geq 5$. In the case of $F'_0$, $F'_1(k)$ and $F'_2(k)$, the labeled rows may be either L or LR indistinctly and independently from one another, %\todo{and independently from one another?}, 
and in the case of their dual matrices, the labeled rows may be either R or LR indistinctly and independently from one another.

%%%%%%%%%%%%%%%%%%%%%%         matrices S         %%%%%%%%%%%%%%%%%%%

The matrices $\mathcal{S}$ in Figure~\ref{fig:forb_S} are defined as follows.
If $k$ is odd, then $S_1(k) \in \{0,1\}^{(k+1) \times k}$ for $k \geq 3$, and if $k$ is even, then $S_1(k) \in \{0,1\}^{k \times (k-2)}$ for $k\geq 4$.
The remaining matrices have the same size whether $k$ is even or odd: $S_2(k) \in \{0,1\}^{k \times (k-1)}$ for $k \geq 3$,
$S_3(k) \in \{0,1\}^{k \times (k-1)}$ for $k \geq 3$,
$S_5(k) \in \{0,1\}^{k \times (k-2)}$ for $k \geq 4$,
$S_4(k) \in \{0,1\}^{k \times (k-1)}$, $S_6(k) \in \{0,1\}^{k \times k}$ for $k\geq 4$,
$S_7(k) \in \{0,1\}^{k \times (k+1)}$ for every $k \geq 3$
and $S_8(2j) \in \{0,1\}^{2j \times (2j)}$ for $j \geq 2$.
%With regard to the coloring of the labeled rows,
If $k$ is even, then the first and last row of $S_2(k)$ and $S_3(k)$ are colored with the same color, whereas in $S_4(k)$ and $S_5(k)$ are colored with distinct colors.

\begin{figure}[H]
    \centering
    \footnotesize{
    \begin{align*}
            S_1(2j) &= \scriptsize{ \bordermatrix{ & \cr
                \textbf{L}& 1 0 \ldots 00  \cr
                              & 1 1 \ldots 00 \cr
                               &   \ddots  \cr
                               & 0 0 \ldots 11 \cr
                \textbf{LR} &0 0  \ldots 01 \cr
                \textbf{L} & 1  1  \ldots  11 }\ }
            &
            S_1(2j+1) &= \scriptsize{  \bordermatrix{ & \cr
                \textbf{L} & 1 0 \ldots 0 0 \cr
                            & 1 1 \ldots 0 0 \cr
                            & \ddots \cr
                            & 0 0 \ldots 1 1 \cr
                \textbf{LR} &0 0 \ldots 0 1 }\ }
            &
        S_2(k) &= \scriptsize{ \bordermatrix{ & \cr
                \textbf{L}& 1 0 \ldots 0 0 \cr
                            & 1 1  \ldots 0 0 \cr
                            &  \ddots \cr
                            &0 0 \ldots 1 1 \cr
                \textbf{L} & 1 1 \ldots 1 0 }\
                \begin{matrix}
             \textcolor{dark-orange}{\bullet} \\ \\ \\ \\ \\ \textcolor{dark-green}{\bullet} \\
                \end{matrix}  }
        \end{align*}
        \begin{align*}
         S_3(k) &=  \scriptsize{ \bordermatrix{ & \cr
             \textbf{L} & 1 0 \ldots 0 0 \cr
                              & 1 1  \ldots 0 0 \cr
                              &  \ddots \cr
                              &0 0 \ldots 1 1 \cr
            \textbf{R} & 0 0 \ldots 0 1 }\
                \begin{matrix}
            \textcolor{dark-orange}{\bullet} \\ \\ \\ \\ \\ \textcolor{dark-green}{\bullet} \\
                \end{matrix} }
                &
        S_4(k) &= \scriptsize{ \bordermatrix{ & \cr
        \textbf{LR}& 1 1 \ldots 1 1 \cr
        \textbf{L} & 1 0 \ldots 0 0 \cr
                        & 1 1  \ldots  0 0 \cr
                        &  \ddots \cr
                        & 0 0  \ldots 1 1 \cr
          \textbf{R}& 0 0 \ldots 0 1  }\
                \begin{matrix}
            \\ \textcolor{dark-orange}{\bullet} \\ \\ \\ \\ \\ \textcolor{dark-green}{\bullet} \\
                \end{matrix} }
        &
        S_5(k) &= \scriptsize{ \bordermatrix{ & \cr
        \textbf{L}& 1 0 \ldots 0 0 \cr
                      & 1 1 \ldots 0 0 \cr
                      &  \ddots \cr
                      & 0 0  \ldots 1 1 \cr
        \textbf{LR}& 1 1 \ldots 1 0 \cr
        \textbf{L}& 1 1 \ldots 1 1  }\
                \begin{matrix}
             \textcolor{dark-orange}{\bullet} \\ \\ \\ \\ \\ \\  \textcolor{dark-green}{\bullet} \\
                \end{matrix} }
        \end{align*}
        \begin{align*}
        S_6(3) &= \scriptsize{ \bordermatrix{ & \cr
        \textbf{LR} & 1 1 0 \cr
          \textbf{R} & 0 1 1  \cr
                          & 1 1 0 }\ }
        &
        S_6'(3) &= \scriptsize{ \bordermatrix{ & \cr
        \textbf{LR} & 1 1 0 \cr
          \textbf{R} & 0 1 1  \cr
                          & 1 1 1 }\ }
         &
        S_6(k) &= \scriptsize{ \bordermatrix{ & \cr
        \textbf{LR}& 1 1 1 \ldots 1 1 0 \cr
          \textbf{R}& 0 1 1 \ldots 1 1 1 \cr
                        & 1 1 0  \ldots 0 0 0  \cr
                        &  \ddots \cr
                        & 0 0 0 \ldots 0 1 1 }\
                \begin{matrix}
            \\ \\ \textcolor{dark-orange}{\bullet} \\ \\ \\ \\ \\ \\
                \end{matrix} }
    \end{align*}
    \begin{align*}
        S_7(3) &= \scriptsize{ \bordermatrix{ & \cr
        \textbf{LR} & 1 1 0 0 1 \cr
        \textbf{LR} & 1 0 0 1 1  \cr
                         & 1 1 1 0 0 }\ }
        &
        S_7(2j) &= \scriptsize{ \bordermatrix{ & \cr
        \textbf{LR}& 1 1 0 0 \ldots 0 0 0 \cr
        \textbf{LR}& 1 0 0 0  \ldots 0 0 1 \cr
                        & 0 1 1 0 \ldots 0 0 0  \cr
                        &  \ddots \cr
                        & 0 0 0 0 \ldots 0 1 1 }\ }
        &
        S_8(2j) &= \scriptsize{ \bordermatrix{ & \cr
        \textbf{LR}& 1 0 0 \ldots 0 0 1 \cr
                        & 1 1 0  \ldots 0 0 0 \cr
                        &  \ddots \cr
                        & 0 0 0 \ldots 0 1 1 }\ }
    \end{align*}}
\caption{The family of matrices $\mathcal{S}$ for every $j\geq2$ and every odd $k \geq 3$} \label{fig:forb_S}
\end{figure}

%%%%%%%%%%%%%%%%%%%%%%         matrices P         %%%%%%%%%%%%%%%%%%%

In the matrices $\mathcal{P}$, the integer $l$ represents the number of unlabeled rows between the first row and the first LR-row. The matrices $\mathcal{P}$ described in Figure~\ref{fig:forb_P} are defined as follow:
$P_0(k,0) \in \{0,1\}^{k \times k}$ for every $k \geq 4$, $P_0(k,l) \in \{0,1\}^{k \times (k-1)}$ for every $k \geq 5$ and $l >0$;
$P_1(k,0) \in \{0,1\}^{k \times (k-1)}$ for every $k \geq 5$, $P_1(k,l) \in \{0,1\}^{k \times (k-2)}$ for every $k \geq 6$, $l > 0$;
$P_2(k,0) \in \{0,1\}^{k \times (k-1)}$ for every $k \geq 7$, $P_2(k,l) \in \{0,1\}^{k \times (k-2)}$ for every $k \geq 8$ and $l > 0$.
If $k$ is even, then the first and last row of every matrix in $\mathcal{P}$ are colored with distinct colors.

\begin{figure}[H]
    \centering
    \footnotesize{
        \begin{align*}
        P_0(k,0) =  \scriptsize{ \bordermatrix{ & \cr
        \textbf{L} & 1 1 0 0 0  \ldots 0 0 0   \cr
        \textbf{LR} & 1 0 0 1 1 \ldots 1 1 1 \cr
                         & 0 0  1  1  0  \ldots 0 0 0 \cr
                         &  \ddots  \cr
                        & 0 0 0 0 0 \ldots  0 1 1  \cr
        \textbf{R} & 0 0 0 0 0 \ldots 0 0 1  }\
     \begin{matrix}
      \textcolor{dark-green}{\bullet} \\ \\ \\ \\ \\ \\ \textcolor{dark-green}{\bullet}
      \end{matrix} }
        &&
        P_0(k,l) = \scriptsize{  \bordermatrix{ & \cr
        \textbf{L}& 1 0 0 \ldots 0 0 0 0 \ldots 0 \cr
                      & 1 1 0 \ldots 0 0 0 0  \ldots 0 \cr
                      &  \ddots \cr
                     & 0 0 0 \ldots 1 1 0 0 \ldots 0 \cr
    \textbf{LR} & 1 1 1 \ldots 1 0 0 1 \ldots 1 \cr
                     & 0 0 0 \ldots 0 0 1 1  \ldots 0 \cr
                     & \ddots \cr
                    & 0 0 0 \ldots 0 0 \ldots 0 1 1 \cr
        \textbf{R} & 0 0 0 \ldots 0 0 \ldots 0 0 1 }\
    \begin{matrix}
      \textcolor{dark-green}{\bullet} \\ \\ \\ \\ \\ \\ \\ \\ \\ \\ \textcolor{dark-green}{\bullet}
      \end{matrix} }
       \end{align*}
    \begin{align*}
        P_1(k,0) = \scriptsize{ \bordermatrix{ & \cr
        \textbf{L} & 1 1 0 0  \ldots 0 0 0   \cr
        \textbf{LR} & 1 0 1 1 \ldots 1 1 1 \cr
        \textbf{LR} & 1 1 0 1 \ldots 1 1 1 \cr
                         & 0 0  1  1  0  \ldots 0 0 0 \cr
                         &  \ddots  \cr
                        & 0 0 0 0 0 \ldots  0 1 1  \cr
        \textbf{R} & 0 0 0 0 \ldots 0 0 1  }\
     \begin{matrix}
      \textcolor{dark-green}{\bullet} \\ \\ \\ \\ \\ \\ \\ \textcolor{dark-green}{\bullet}
      \end{matrix} }
    &&
            P_1(k,l) = \scriptsize{ \bordermatrix{ & \cr
        \textbf{L}& 1 0 0 \ldots 0 0 0 0 \ldots 0 \cr
                      & 1 1 0 \ldots 0 0 0 0  \ldots 0 \cr
                      &  \ddots \cr
                     & 0 0 0 \ldots 1 1 0 0 \ldots 0 \cr
    \textbf{LR} & 1 1 1 \ldots 1 0 1 1 \ldots 1 \cr
    \textbf{LR} & 1 1 1 \ldots 1 1 0 1 \ldots 1 \cr
                     & 0 0 0 \ldots 0 0 1 1  \ldots 0 \cr
                     & \ddots \cr
                    & 0 0 0 \ldots 0 0 \ldots 0 1 1 \cr
        \textbf{R} & 0 0 0 \ldots 0 0 \ldots 0 0 1 }\
    \begin{matrix}
      \textcolor{dark-green}{\bullet} \\ \\ \\ \\ \\ \\ \\ \\ \\ \\ \\ \textcolor{dark-green}{\bullet}
      \end{matrix} }
       \end{align*}
    \begin{align*}
        P_2(k,0) =  \scriptsize{ \bordermatrix{ & \cr
        \textbf{L} & 1 1 0 0 0 0 \ldots 0 0 0 \cr
        \textbf{LR} & 1 0 1 1 1 1 \ldots 1 1 1 \cr
        \textbf{LR} & 1 1 1 0 1 1 \ldots 1 1 1 \cr
        \textbf{LR} & 1 1 0 1 1 1 \ldots 1 1 1 \cr
        \textbf{LR} & 1 1 1 0 0 1 \ldots 1 1 1 \cr
                         & 0 0  0 0 1  1 \ldots 0 0 0 \cr
                         &  \ddots  \cr
                        & 0 0 0 0 0 \ldots  0 1 1  \cr
        \textbf{R} & 0 0 0 0 0 \ldots 0 0 1  }\
     \begin{matrix}
      \textcolor{dark-green}{\bullet} \\ \\ \\ \\ \\ \\ \\ \\ \\ \\ \textcolor{dark-green}{\bullet}
      \end{matrix} }
    &&
            P_2(k,l) = \scriptsize{ \bordermatrix{ & \cr
        \textbf{L}& 1 0 0 \ldots 0 0 0 0 0 \ldots 0 \cr
                      & 1 1 0 \ldots 0 0 0 0 0  \ldots 0 \cr
                      &  \ddots \cr
                     & 0 0 0 \ldots 1 1 0 0 0  \ldots 0 \cr
    \textbf{LR} & 1 1 1 \ldots 1 0 0 1 1 \ldots 1 \cr
    \textbf{LR} & 1 1 1 \ldots 1 1 1 0 1 \ldots 1 \cr
    \textbf{LR} & 1 1 1 \ldots 1 1 0 1 1 \ldots 1 \cr
    \textbf{LR} & 1 1 1 \ldots 1 1 0 0 1 \ldots 1 \cr
                     & 0 0 0 \ldots 0 0 0 1 1 \ldots 0 \cr
                     & \ddots \cr
                    & 0 0 0 \ldots 0 0 0 \ldots 0 1 1 \cr
        \textbf{R} & 0 0 0 \ldots 0 0 0  \ldots 0 0 1 }\
    \begin{matrix}
      \textcolor{dark-green}{\bullet} \\ \\ \\ \\ \\ \\ \\ \\\ \\ \\ \\ \\ \\ \\ \textcolor{dark-green}{\bullet}
      \end{matrix} }
    \end{align*}}
    \caption{The family of enriched matrices $\mathcal{P}$ for every odd $k$.}
    \label{fig:forb_P}
    \end{figure}

%%%%%%%%%%%%%%%%%%%%%%         matrices M         %%%%%%%%%%%%%%%%%%%

\begin{figure}[h]
    \centering
  \footnotesize{
    \begin{align*}
            M_2'(k) = \scriptsize{ \bordermatrix{ & \cr
                        & 1 1 1 \ldots 1 1 1\cr
           \textbf L &  1 0 0 \ldots 0 0 0\cr
                        & 1  1 0  \ldots  0 0 0\cr
                        & \ddots  \cr
                        & 0 0 0 \ldots 1 1 0 \cr
           \textbf L & 1 1 1 \ldots 1 0 1 }\ }
            &&
            M_2''(k) = \scriptsize{ \bordermatrix{ & \cr
        \textbf R & 1  1  1 \ldots  1 1 1\cr
        \textbf L & 1  0 0 \ldots  0 0 0\cr
                    & 1  1 0  \ldots  0 0 0\cr
                        & \ddots \cr
                        & 0 0 0  \ldots  1 1 0 \cr
         \textbf R &  0 0 0 \ldots  0 1  0 \cr
     \textbf{L} & 1  1 1 \ldots  1 1  1  }\  }
             \end{align*}
            \begin{align*}
            M_3'(k) = \scriptsize{ \bordermatrix{ & \cr
            \textbf L &  1 0  0   \ldots   0   0   0 \cr
                &   1  1 0   \ldots   0   0   0 \cr
                & \ddots \cr
                & 0 0  0   \ldots   1   1   0 \cr
                & 1 1 1   \ldots   1   0   1  }\     }
            &&
            M_3''(k) = \scriptsize{ \bordermatrix{ & \cr
                & 1   1  0  \ldots  0 0  \cr
                & 0  1  1   \ldots  0 0  \cr
                & \ddots \cr
                & 0 0 0   \ldots  1 1 \cr
                \textbf R & 0 1 1  \ldots  1  0  }\ }
            &&
            M_4' =  \scriptsize{ \bordermatrix{ & \cr
                    \textbf L & 1  0  0  0 0 \cr
                    & 0  1   1   0  0 \cr
                    & 0  0  0  1   1 \cr
                    & 1  0  1  0  1 }\ }
            \end{align*}
            \begin{align*}
            M_4'' = \scriptsize{  \bordermatrix{ & \cr
                    \textbf L & 1  0  0  0 \cr
                    \textbf R & 0  1  0  0 \cr
                                & 0  0  1  1 \cr
                                 &  1  1   0   1 }\ }
            &&
            M_5' = \scriptsize{ \bordermatrix{ & \cr
                    & 1   1   0   0   \cr
                    & 0   0   1   1   \cr
                    \textbf R & 1   0   0   1  \cr
                    & 1   1   1   }\ }
            &&
            M_5'' = \scriptsize{ \bordermatrix{ & \cr
                    \textbf L & 1  0   0   0 \cr
                    & 0  1   1   0 \cr
                    & 1   0   1   1 \cr
                    \textbf L & 1  1   1  0 }\ }
    \end{align*}
       }
    \caption{The enriched matrices in family $\mathcal{M}$: $M_{2}'(k)$, $M_{3}'(k)$, $M_{3}''(k)$, $M_{3}'''(k)$ for $k \geq 4$, and $M_{2}''(k)$ for $k \geq 5$. }  \label{fig:forb_LR-orderable}
\end{figure}

%\todo{Decir cu\'ales son las Tucker matrices}

\begin{definition} \label{def:A*}
	Let $A$ be an enriched matrix and let $\Pi$ be a LR-ordering. %Let $m_L$ be the inclusion-wise maximal L-block among the rows labeled with L, and let $m_R$ be the inclusion-wise maximal R-block among the rows labeled with R. 
	We define $A^*$ as the enriched matrix that arises from $A$ by:
\begin{itemize}
	\item replacing each LR-row by its complement, and
	\item adding two distinguished rows: one labeled with L and the other labeled with R, and both of them having a $1$ in every column. %$L (1, \ldots, 1, 0)$ as the first row and $(0, 1, \ldots, 1) R$ as the last row.
\end{itemize}	 
\end{definition}

In \cite{T72}, Tucker characterised all the minimal forbidden submatrices for the C$1$P, later known as \emph{Tucker matrices}. 

\begin{theorem}\label{teo:tucker}
The $(0,1)$--matrix $M$ has the consecutive-ones property for columns and rows if and only if no submatrix of $M$, or of the transpose of $M$, is a member of the families depicted in Figure~\ref{fig:tucker_matrices}.
\end{theorem}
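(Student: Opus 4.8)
The plan is to reduce the two-sided statement to the classical one-sided characterization and then to describe how that one-sided statement is proved. Note first that $M$ has the C$1$P for columns exactly when $M^{\mathsf T}$ has the C$1$P for rows, so it suffices to establish the single assertion ``a $(0,1)$-matrix has the C$1$P for rows if and only if it contains no Tucker matrix as a submatrix'' and then apply it to both $M$ and $M^{\mathsf T}$; the hypothesis ``no submatrix of $M$ \emph{or} of $M^{\mathsf T}$ is a Tucker matrix'' is precisely the conjunction of the two one-sided obstructions. For the necessity (easy) direction I would use that the C$1$P is hereditary: if a column ordering $\Pi$ witnesses the C$1$P of a matrix, then the order it induces on any subset of columns witnesses the C$1$P of any submatrix obtained by deleting rows and columns. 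Hence, if $M$ had the C$1$P and also contained some Tucker matrix $T$ as a submatrix, then $T$ would itself have the C$1$P. It therefore suffices to check, once and for all, that no matrix in any Tucker family has the C$1$P: for the finitely many exceptional matrices ($M_{IV}$, $M_V$) this is a direct finite check, while for the parametric families $M_I(k)$, $M_{II}(k)$, $M_{III}(k)$ one shows that any candidate consecutive ordering of the columns forces a cyclic system of ``betweenness'' constraints that cannot be realized on a line.

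For the converse (the substantive direction) I would argue by contradiction, choosing a matrix $M$ that contains no Tucker submatrix, nor does $M^{\mathsf T}$, yet fails the C$1$P, and taking it minimal in the number of columns and then of rows; the aim is to exhibit a forbidden submatrix, contradicting the hypothesis. The analysis is organized around the overlap relation on the rows in the sense of Section~\ref{section:basic_defs} (two nonempty rows overlap when they are neither disjoint nor nested). A first reduction handles connectivity: if the columns partition into blocks so that each row's support lies in a single block, then each block is a proper submatrix and hence has the C$1$P by minimality, and the witnessing orderings can be concatenated, contradicting the failure for $M$; so the row--column incidence is connected. Next I would fix a maximal set of columns on which some subfamily of rows does admit a consecutive arrangement, select a row that first obstructs extending this arrangement, and follow how it overlaps the already-placed rows, thereby producing a chain of forced column relations converging to a local incompatibility.

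The hard part is the exhaustive structural case analysis that converts such a minimal local obstruction into one of the five Tucker patterns. Concretely, one must show that the incompatibility is witnessed by a cyclic or ``asteroidal'' configuration among three suitably chosen rows (or, dually, three columns), and that minimality pins down the surrounding entries to be exactly those of $M_I(k)$, $M_{II}(k)$, $M_{III}(k)$, $M_{IV}$ or $M_V$ rather than a larger pattern. An equivalent way to marshal the same case analysis is to run the PQ-tree reduction of Booth and Lueker on $M$: the construction succeeds precisely when the C$1$P holds, and each way a reduction step can fail corresponds, after pruning to a minimal witnessing set of rows and columns, to one of the Tucker families. I expect the bookkeeping in this final step to be the main obstacle, namely verifying that every failure mode is accounted for and that each extracted submatrix is genuinely one of the listed matrices; this is where minimality of $M$ is used repeatedly to delete superfluous rows and columns until only a Tucker matrix remains.
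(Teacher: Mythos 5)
The first thing to note is that the paper does not prove this statement at all: it is Tucker's 1972 theorem, imported by citation to \cite{T72} and used as a black box (it feeds into Lemma~\ref{lema:LR-orderable_caract_bymatrices} through the matrix $A^*_\tagg$). So there is no proof in the paper to compare against; the only question is whether your blind attempt would stand on its own as a proof of Tucker's theorem.

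As written, it would not. Your reduction of the two-sided statement to the one-sided one (apply the row statement to $M$ and to $M^{\mathsf T}$) is correct, and the necessity direction is fine: the C$1$P is hereditary under taking submatrices, and one can verify family by family that no Tucker matrix has the C$1$P. But the sufficiency direction, which is the entire content of the theorem, is never carried out. You set up a minimal counterexample, reduce to the connected case, and then assert that a chain of forced column relations yields a local incompatibility which an ``exhaustive structural case analysis'' converts into one of the five Tucker patterns; alternatively you invoke the Booth--Lueker PQ-tree reduction and assert that each failure mode prunes to a Tucker matrix. Both assertions are restatements of the theorem, not arguments for it. Tucker's actual proof requires substantial machinery (a translation to asteroidal triples in the bipartite incidence graph and an induction that pins down the minimal obstructions), and the PQ-tree route requires a complete enumeration of how each template reduction can fail; neither enumeration appears in your sketch, and it cannot be dismissed as bookkeeping. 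Showing that the minimal obstructions are exactly $M_I(k)$, $M_{II}(k)$, $M_{III}(k)$, $M_{IV}$, $M_V$ --- no more and no fewer --- is precisely where the difficulty lives, so your outline is a viable plan of attack but not a proof.
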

 
\begin{figure}[h!] 
	\centering
	\begin{align*}
			M_I(k)&= \scriptsize{ \begin{pmatrix}
				110...00\\
				011...00\\
				.   .   .   .   . \\
				.   .   .   .   . \\
				.   .   .   .   . \\
				000...11\\
				100...01\\
			\end{pmatrix}}
			&
			M_{II}(k)&= \scriptsize{  \begin{pmatrix}
				011...111\\
				110...000\\
				011...000\\
				.   .   .   .   . \\
				.   .   .   .   . \\
				000...110\\
				111...101\\
			\end{pmatrix}}
			&
			M_{III}(k)&= \scriptsize{  \begin{pmatrix}
				110...000\\
				011...000\\
				.   .   .   .   . \\
				.   .   .   .   . \\
				000...110\\
				011...101\\
			\end{pmatrix}}
			\\
			\end{align*}
			\begin{align*}
			M_{IV}&= \scriptsize{ \begin{pmatrix}
				110000\\
				001100\\
				000011\\
				010101\\
			\end{pmatrix}}
			&
			M_{V}&= \scriptsize{  \begin{pmatrix}
				11000\\
				00110\\
				11110\\
				10011\\
			\end{pmatrix}}
	\end{align*}
	\caption{Tucker matrices $M_{I}(k) \in \{0,1\}^{k \times k}$, $M_{III}(k) \in \{0,1\}^{k \times (k+1)}$ with $k \geq 3$, and $M_{II}(k) \in \{0,1\}^{k \times k}$ with $k \geq 4$} \label{fig:tucker_matrices}
\end{figure} 

Now we can state Theorem~\ref{teo:2-nested_charact}, which characterises $2$-nested matrices by forbidden subconfigurations and is the main result of this work. The proof for this theorem will be given at the end of the section.
 
\begin{theorem} \label{teo:2-nested_charact}
	Let $A$ be an enriched matrix. Then, $A$ is $2$-nested if and only if $A$ contains none of the following listed matrices or their dual matrices as subconfigurations: 
	\begin{itemize}		
	\item $M_0$, $M_{II}(4)$, $M_V$ or $S_0(k)$ for every even $k$ (see Figure~\ref{fig:forb_M_chiquitas})
	\item every enriched matrix in the family $\mathcal{D}$ (see Figure~\ref{fig:forb_D})  
	\item every enriched matrix in the family $\mathcal{F}$ (see Figure~\ref{fig:forb_F}) 
	\item every enriched matrix in the family $\mathcal{S}$ (see Figure~\ref{fig:forb_S})
	\item every enriched matrix in the family $\mathcal{P}$ (see Figure~\ref{fig:forb_P})
	\item monochromatic gems, monochromatic weak gems, badly-colored doubly-weak gems 
	\end{itemize}
and $A^*$ contains no Tucker matrices (see Figure~\ref{fig:tucker_matrices}) and none of the enriched matrices in $\mathcal{M}$ or their dual matrices as subconfigurations (see Figure~\ref{fig:forb_LR-orderable}).

\end{theorem}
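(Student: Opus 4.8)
The plan is to prove both implications by separating the two independent obligations hidden in the definition of a $2$-nested matrix: the existence of an \emph{LR-ordering} $\Pi$ of the columns, and the existence of a \emph{total block bi-coloring} of the blocks determined by $\Pi$. Accordingly I would first settle orderability using the auxiliary matrix $A^*$ of Definition~\ref{def:A*}, and only then attack the colouring. The guiding observation is that $A$ is LR-orderable precisely when $A^*$ has the consecutive-ones property: complementing every LR-row turns the requirement that the complement of each LR-row be consecutive into an ordinary consecutive-ones requirement, while the two added all-ones rows labelled L and R force, in any consecutive-ones ordering of $A^*$, every L-row to be anchored at the first column and every R-row at the last column. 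By Tucker's theorem (Theorem~\ref{teo:tucker}) the C$1$P of $A^*$ is equivalent to $A^*$ containing no Tucker matrix, and the residual label bookkeeping that Tucker's theorem does not see is exactly what the family $\mathcal{M}$ of Figure~\ref{fig:forb_LR-orderable} records; this is the content I would establish in Section~\ref{subsec:part2nested}, yielding that $A$ is LR-orderable iff $A^*$ contains neither a Tucker matrix nor a member of $\mathcal{M}$ (nor a dual thereof).

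With orderability in hand, the \emph{only if} direction is the routine one. Given a $2$-nested $A$ with witnessing $\Pi$ and total block bi-colouring, each listed obstruction is ruled out locally: a monochromatic gem, a monochromatic weak gem, or a badly-coloured doubly-weak gem directly violates one of conditions \ref{item:2nested1}--\ref{item:2nested9} on the two rows involved, and each of $M_0$, $M_{II}(4)$, $M_V$, $S_0(k)$ and each member of $\mathcal{D}$, $\mathcal{F}$, $\mathcal{S}$, $\mathcal{P}$ is checked to be non-$2$-nested by exhibiting, for every candidate LR-ordering and colouring, a forced clash among the same conditions; the admissibility results of Section~\ref{subsec:admissibility} reduce this to finitely many essentially different colourings per family member. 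Since subconfigurations are inherited by submatrices, their absence in $A$ follows, and the absence of the $A^*$ obstructions follows from the orderability equivalence above.

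The substance is the \emph{if} direction. Assuming all the listed subconfigurations are absent and $A^*$ avoids Tucker matrices and $\mathcal{M}$, the first step fixes, by the orderability characterisation above, some LR-ordering $\Pi$; the characterisation of \emph{partially} $2$-nested matrices from Section~\ref{subsec:part2nested} then guarantees that the pre-assigned colours can be realised, reducing the problem to extending a partial basic bi-colouring to a total one satisfying conditions \ref{item:2nested1}--\ref{item:2nested9}. I would encode this as a proper $2$-colouring problem on a conflict graph $\mathcal{C}$ whose vertices are the blocks of $A$ with respect to $\Pi$: conditions \ref{item:2nested1}, \ref{item:2nested3} and \ref{item:2nested5} contribute ``different colour'' edges; the pre-colour condition \ref{item:2nested2} and the overlap condition \ref{item:2nested9} are imposed as fixed colours or as identifications of vertices; and the disjointness and containment conditions \ref{item:2nested4}, \ref{item:2nested6}, \ref{item:2nested7}, \ref{item:2nested8} restrict which block pairs may be identified or joined. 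A valid total block bi-colouring then exists iff $\mathcal{C}$ admits a proper $2$-colouring consistent with the fixed colours, i.e.\ iff $\mathcal{C}$ has no fixed-colour inconsistency and no odd closed walk whose parity forces a clash. The heart of the argument is to show that every such minimal obstruction in $\mathcal{C}$ pulls back to a subconfiguration of $A$ equal to one of the forbidden matrices: $M_0$, $M_{II}(4)$, $M_V$ and the members of $\mathcal{D}$ account for the short clashes and fixed-colour inconsistencies, while the parametrised families $S_i(k)$, $F_i(k)$ and $P_i(k)$ are exactly the minimal odd cycles of $\mathcal{C}$ --- here the parity hypotheses (even versus odd $k$) correspond to whether the chain of overlapping blocks closes up through a same-colour or a different-colour constraint, which is precisely what makes the closing cycle odd.

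The main obstacle I anticipate is this pull-back step in the sufficiency proof. Translating a shortest bad cycle of $\mathcal{C}$ into an explicit forbidden submatrix requires controlling how consecutive overlapping blocks chain together along $\Pi$ while simultaneously tracking the L/R/LR labels and the pre-assigned colours, which multiplies the number of configurations to be matched against the families $\mathcal{S}$, $\mathcal{F}$ and $\mathcal{P}$ and their duals. A second difficulty is that $\Pi$ is not unique, so I must argue either that a colouring obstruction is independent of the chosen LR-ordering, or that among all admissible LR-orderings one can be selected that introduces no spurious conflict; the structural properties of LR-orderings of admissible matrices proved in Section~\ref{subsec:part2nested} are the tool I would lean on to make this canonical choice and to keep the case analysis finite.
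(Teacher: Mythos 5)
Your proposal follows essentially the same route as the paper: the paper likewise splits the problem into LR-orderability, characterised via the (tagged) matrix $A^*$ through Tucker's theorem plus the family $\mathcal{M}$, and a colouring-extension problem on a conflict graph whose vertices are the blocks (the paper's auxiliary matrix $A+$ and graph $H(A+)$), resolved by a lemma characterising when a partial proper $2$-colouring extends (in terms of forbidden partially coloured paths and odd cycles) and then pulling each such obstruction back to a member of $\mathcal{D}$, $\mathcal{F}$, $\mathcal{S}$, $\mathcal{P}$. Even the two difficulties you flag are exactly the ones the paper handles, via its notion of suitable LR-ordering (Lemma~\ref{lema:hay_suitable_ordering}) and the long case analysis in the proof of Theorem~\ref{teo:case2_2nested}.
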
 

Throughout the following sections we give some definitions and char\-ac\-ter\-i\-za\-tions that will allow us to prove the above theorem. In Section~\ref{subsec:admissibility} we define and characterise the notion of admissibility, which encompasses all the properties we need to consider when coloring the blocks of an enriched matrix. In Section~\ref{subsec:part2nested}, we give a characterization for LR-orderable matrices by forbidden subconfigurations. Afterwards, we define and characterise partially $2$-nested matrices, which are those enriched matrices that admit an LR-ordering and for which the given coloring %\todo{`given coloring'? (muchas veces)} 
of those labeled rows induces a partial block bi-coloring.
These definitions and characterizations allow us to prove Lemmas~\ref{lema:2-nested_if} and~\ref{lema:2-nested_onlyif}, which are of fundamental importance for the proof of Theorem~\ref{teo:2-nested_charact}.

\subsection{Admissibility} \label{subsec:admissibility}

In this section we define the notion of admissibility for an enriched $(0,1)$-matrix, which allows us to describe the necessary conditions for an enriched matrix to admit a block bi-coloring. %, which will allow us to characterise those enriched matrices for which there is a total block bi-coloring for $A$. In the next section, we will see that such a block bi-coloring is a necessary condition to give a circle model.
Notice that the existence of a block bi-coloring for an enriched matrix is a property that can be defined and characterised by forbidden subconfigurations. 

Let us consider the matrices defined in~\ref{fig:forb_D}. These matrices are all examples of enriched matrices that do not admit a total block bi-coloring (recall Definition~\ref{def:2-nested}).
For example, let us consider $D_0$. In order to have a total block bi-coloring of $D_0$, it is necessary that $D_0$ admits an LR-ordering of its columns. In particular, in such an ordering every row labeled with L starts in the first column. Hence, if there is indeed an LR-ordering for $D_0$, then the existence of two distinct non-nested rows labeled with L is not possible. The same holds if both rows are labeled with R.
We can use similar arguments to see that neither $D_2$, $D_3$, $D_7$ and $D_{11}$ admit an LR-ordering and thus they do not admit a total block bi-coloring. 

If instead we consider the enriched matrix $D_1$, then it is straightforward to see that assertion~\ref{item:2nested5} of Definition~\ref{def:2-nested} does not hold for this matrix.
Consider now the matrix $D_4$. It follows from assertion~\ref{item:2nested8} of Definition~\ref{def:2-nested} that if an enriched matrix has two distinct rows labeled with L and colored with distinct colors, then every LR-row has an L-block, and thus $D_4$ does not admit a total block bi-coloring.
Suppose now that $D_4$ is a submatrix of some enriched matrix and that the corresponding LR-row is nonempty in $A$. 
Notice that, if the LR-row has an L-block then it is properly contained in both rows labeled with L. It follows from this and assertion~\ref{item:2nested3} of Definition~\ref{def:2-nested} that the L-block of the LR-row must be colored with a distinct color than the one given to each row labeled with L. However, each of these rows is colored with a distinct color, thus a total block bi-coloring is not possible in that case. 
If we consider the enriched matrix $D_5$, then it follows from assertion~\ref{item:2nested4} of Definition~\ref{def:2-nested} that there is no possible LR-ordering such that the L-block of the LR-row does not intersect the L-row, and the same follows for the R-block of the LR-row and the R-row of $D_5$.

Let us consider the enriched matrix in which we find $D_6$ as a subconfiguration. If the LR-row has an L-block, then it is contained in the L-row, and the same holds for the R-block of the LR-row and the R-row. By assertion~\ref{item:2nested3} of Definition~\ref{def:2-nested}, the L-block must be colored with a distinct color than the L-row, and the R-block must be colored with a distinct color than the R-row. Equivalently, the L-block and the R-block of the LR-row are colored with the same color. However, this is not possible by assertion~\ref{item:2nested1} of Definition~\ref{def:2-nested}.
Similarly, we can see that $D_8$, $D_9$, $D_{10}$, $D_{12}$ and $D_{13}$ do not admit a total block bi-coloring, also having in mind that assertion~\ref{item:2nested9} of Definition~\ref{def:2-nested} must hold pairwise for LR-rows.

\begin{definition} \label{lista:prop_adm}
Let $A$ be an enriched matrix. We define the following properties: 
%\todo{Hay que armonizar: ac\'a se usa \emph{Adm}$_i$, pero cuando se cita se usan n\'umeros solos}

\begin{enumerate}[(a)]  %[${(Adm}_\bgroup1\egroup)$]
	    \item If two rows are labeled both with L or both with R, then they are nested.  \label{item:0_def_adm}
	    
	 	\item If two rows with the same color are labeled one with L and the other with R, then they are disjoint. \label{item:1_def_adm}

	 	\item If two rows with distinct colors are labeled one with L and the other with R, then either they are disjoint or there is no column where both have $0$ entries. \label{item:2_def_adm}

		\item If two rows $r_1$ and $r_2$ have distinct colors and are labeled one with L and the other with R, then any LR-row with at least one non-zero column has nonempty intersection with either $r_1$ or $r_2$. \label{item:3_def_adm}
		
	 	\item If two rows $r_1$ and $r_2$ with distinct colors are labeled both with L or both with R, then for any LR-row $r$, $r_1$ is contained in $r$ or $r_2$ is contained in $r$. \label{item:4_def_adm}		

%	 	Let $r_1$ and $r_2$ be two rows labeled with either L or R, and $r_3$ labeled with LR. If $r_1$ and $r_2$ are labeled with the same letter and have distinct colors, then either $r_1$ or $r_2$ is nested in $r_3$.  
	
		\item If two non-disjoint rows $r_1$ and $r_2$ with distinct colors, one labeled with L and the other labeled with R, then any LR-row is disjoint with regard to the intersection of $r_1$ and $r_2$. \label{item:5_def_adm}

	%Let $r_1$ and $r_2$ be two rows labeled with either L or R, and $r_3$ labeled with LR. If $r_1$ and $r_2$ are labeled with distinct letters and have distinct colors, and $r_1$ and $r_2$ are not disjoint, then $r_3$ is disjoint with regard to the intersection of $r_1$ and $r_2$. 
	
		\item If two rows with the same color are labeled one with L and the other with R, then for any LR-row $r$ one of them is contained in $r$. %In particular, this implies that there are no LR-rows having a $0$ in every column.
		Moreover, the same holds for any two rows with distinct colors and labeled with the same letter. \label{item:6_def_adm}
		%If there is a row $r_1$ labeled with L and a row $r_2$ labeled with R such that $r_1$ and $r_2$ are both colored with the same color, then $\mathcal{U}$ is the emptyset. The same holds for any two rows $r_1$ and $r_2$, both labeled with L and colored with distinct colors. 
		
	%	\item If there is a row $r_1$ labeled with L and a row $r_2$ labeled with R such that $r_1$ and $r_2$ are both colored with the same color, then for any LR-row $r_3$, either $r_1$ or $r_2$ are nested in $r_3$. 
		%Moreover, the same holds for any two rows $r_1$ and $r_2$ labeled with L and colored with distinct colors. \label{item:4_def_adm_ext}

		\item For each three non-disjoint rows such that two of them are LR-rows and the other is labeled with either L or R, two of them are nested. \label{item:7_def_adm}

		\item If two rows $r_1$ and $r_2$ with distinct colors are labeled one with L and the other with R, and there are two LR-rows $r_3$ and $r_4$ such that $r_1$ is neither disjoint or contained in $r_3$ and $r_2$ is neither disjoint or contained in $r_4$, then $r_3$ is nested in $r_4$ or viceversa. \label{item:8_def_adm}

		\item For each three LR-rows, two of them are nested. \label{item:9_def_adm}
    \end{enumerate} 	
    
\end{definition}
    
\vspace{2mm}
For each of the above properties, we characterise the set of minimal forbidden subconfigurations with the following Lemma.

\begin{lemma}
	For any enriched matrix $A$, all of the following assertions hold:
	\begin{enumerate}
		\item $A$ satisfies~assertion~(\ref{item:0_def_adm}) of Definition~\ref{lista:prop_adm} if and only if $A$ contains no $D_0$ or its dual matrix as a subconfiguration.
		\item $A$ satisfies~assertion~(\ref{item:1_def_adm}) of Definition~\ref{lista:prop_adm} if and only if $A$ contains no $D_1$ or its dual matrix as a subconfiguration.
		\item $A$ satisfies~assertion~(\ref{item:2_def_adm}) of Definition~\ref{lista:prop_adm} if and only if $A$ contains no $D_2$ or its dual matrix as a subconfiguration.
		\item $A$ satisfies~assertion~(\ref{item:3_def_adm}) of Definition~\ref{lista:prop_adm} if and only if $A$ contains no $D_2$, $D_3$ or their dual matrices as subconfigurations.
		\item $A$ satisfies~assertion~(\ref{item:4_def_adm}) of Definition~\ref{lista:prop_adm} if and only if $A$ contains no $D_0$, $D_4$ or their dual matrices as subconfigurations.
		\item $A$ satisfies~assertion~(\ref{item:5_def_adm}) of Definition~\ref{lista:prop_adm} if and only if $A$ contains no $D_5$ or its dual matrix as a subconfiguration.
		\item $A$ satisfies~assertion~(\ref{item:6_def_adm}) of Definition~\ref{lista:prop_adm} if and only if $A$ contains no $D_0$, $D_1$, $D_4$, $D_6$ or their dual matrices as subconfigurations.
		%\item $A$ satisfies~\ref{item:4_def_adm_ext} if and only if $A$ contains no $D_4$ as a subconfiguration. %quitar al reformular
		\item $A$ satisfies~assertion~(\ref{item:7_def_adm}) of Definition~\ref{lista:prop_adm} if and only if $A$ contains no $D_7$, $D_8$, $D_9$ or their dual matrices as subconfigurations.		
		\item $A$ satisfies~assertion~(\ref{item:8_def_adm}) of Definition~\ref{lista:prop_adm} if and only if $A$ contains no $D_5$, $D_9$, $D_{10}$ or its dual matrix as a subconfiguration.		
		\item $A$ satisfies~assertion~(\ref{item:9_def_adm}) of Definition~\ref{lista:prop_adm} if and only if $A$ contains no $D_{11}$, $D_{12}$, $D_{13}$ or their dual matrices as subconfigurations.	
	\end{enumerate}
\end{lemma}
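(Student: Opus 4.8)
The plan is to prove each of the ten biconditionals separately and, within each, to establish the two implications by contraposition. The guiding observation is that every property in Definition~\ref{lista:prop_adm} is a \emph{local} condition: it is a universally quantified statement whose hypothesis and conclusion involve only a bounded number of rows (two rows for~(\ref{item:0_def_adm})--(\ref{item:2_def_adm}) and~(\ref{item:5_def_adm}); two labeled rows together with one or two LR-rows for~(\ref{item:3_def_adm}), (\ref{item:4_def_adm}), (\ref{item:6_def_adm}), (\ref{item:8_def_adm}); and three rows for~(\ref{item:7_def_adm}) and~(\ref{item:9_def_adm})) carrying prescribed labels, colors, and containment or disjointness relations. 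Each matrix $D_i$ in the family $\mathcal{D}$ is designed to be a minimal witness to a violation, so the proof of each part reduces to (i) verifying that each listed $D_i$ (and, where relevant, its dual $\tilde{D}_i$) violates the property, and (ii) showing that any violation of the property exhibits one of the listed matrices as a subconfiguration.

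Direction (i) is a direct inspection and forms the routine half of every part. For each listed $D_i$, I would read off its rows together with their labels and colors, confirm that they instantiate the hypothesis of the property, and then point to the column, or the absence of a column, that breaks the required conclusion. Two conventions must be respected throughout. First, whenever a property is symmetric under interchanging the labels L and R, its violations split into those witnessed by $D_i$ and those witnessed by the dual $\tilde{D}_i$ (Definition~\ref{def:dual}), so the dual matrices must be listed exactly when the property does not distinguish L from R. Second, because green and orange stand for red and blue in either order, the color hypotheses (``same color'' versus ``distinct colors'') are matched against the bicoloring of $D_i$ without committing to a fixed assignment.

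Direction (ii) is the substantive half. Assuming a property fails, one obtains witness rows in the prescribed label and color pattern whose required nesting or disjointness fails, and the task is to select a bounded set of columns realizing the exact $0/1$ pattern of some listed $D_i$. The column choices are dictated by the failed conclusion. For instance, a failure of~(\ref{item:0_def_adm}) gives two rows labeled alike that are not nested, so there is a column where the first has a $1$ and the second a $0$ and another where the roles are reversed; restricting to these two columns yields $D_0$ (or its dual). A failure of~(\ref{item:2_def_adm}) gives two distinct-colored rows labeled one L and one R that share a $1$-column and also share a $0$-column, so selecting one column of each kind yields $D_2$. For the properties that guarantee containment in an LR-row, the degenerate case of an \emph{empty} LR-row must be isolated first, since it produces the thin matrices such as $D_4$ and $D_6$ directly, whereas a nonempty LR-row forces a column in which the prescribed containment is broken. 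In every case, after restricting to the chosen rows and columns and deleting repeated columns, the resulting submatrix equals some $D_i$ or its dual up to permutation, with labels and colors preserved, which is precisely the subconfiguration relation of Definition~\ref{def:subconfiguration}.

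The hard part will be the multi-row properties~(\ref{item:7_def_adm}), (\ref{item:8_def_adm}) and~(\ref{item:9_def_adm}), where several forbidden matrices are listed for a single property and the extraction in direction (ii) requires a case analysis on how the rows interrelate. For~(\ref{item:7_def_adm}) one must decide how each of the two LR-rows meets the L- or R-row and sort the failures among $D_7$, $D_8$, and $D_9$ according to their mutual overlaps. For~(\ref{item:8_def_adm}) the hypothesis already fixes a rich incidence pattern between the two labeled rows and the two LR-rows, and the columns witnessing ``neither disjoint nor contained'' must be chosen so that the non-nested pair of LR-rows is exposed, landing on $D_5$, $D_9$, or $D_{10}$. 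For~(\ref{item:9_def_adm}) one shows that three pairwise non-nested LR-rows always expose exactly one of $D_{11}$, $D_{12}$, $D_{13}$, corresponding to the pairwise-disjoint, all-overlapping, and chain-overlapping shapes. The delicate point throughout these cases is to verify that the minimal witness extracted carries exactly the labels L, R, or LR appearing in the listed matrix, and not some more special configuration, so that the subconfiguration relation holds on the nose.
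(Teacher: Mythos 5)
Your proposal is correct and follows essentially the same route as the paper's proof: the substantive work is, for each property, taking a violating set of rows and selecting columns that realize the exact pattern of one of the listed $D_i$ (or its dual), with the multi-row properties handled by a case analysis on the mutual overlaps of the witnesses, exactly as the paper does (the paper treats the converse direction — that each $D_i$ violates its property — as immediate, just as you classify it as routine inspection).
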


\begin{proof}

\vspace{1mm}
First we find every forbidden subconfiguration corresponding to assertion~\ref{item:0_def_adm} of Definition~\ref{lista:prop_adm}.
Let $f_1$ and $f_2$ be two rows labeled both with L or both with R, and suppose they are not nested. Thus, there is a column in which $f_1$ has a $1$ and $f_2$ has a $0$, and another column in which $f_2$ has a $1$ and $f_1$ has a $0$. 
%Since the color of each row is irrelevant in the definition, 
In this way, we find $D_0$ or its dual as a
%forbidden 
subconfiguration of $A$.

Let us find now every forbidden subconfiguration corresponding to assertion~\ref{item:1_def_adm} of Definition~\ref{lista:prop_adm}.
Let $f_1$ and $f_2$ be rows labeled one with L and the other with R and colored with the same color. If $f_1$ and $f_2$ are not disjoint, then there is a column in which both rows have a $1$. In this case, we find $D_1$ or its dual as a 
%forbidden 
subconfiguration of $A$.

For assertion~\ref{item:2_def_adm} of Definition~\ref{lista:prop_adm}, let $f_1$ and $f_2$ be two rows labeled one with L and the other with R and colored with distinct colors, and suppose they are not disjoint and there is a column $j_1$ such that both rows have a $0$ in column $j_1$. Thus, there is a column $j_2 \neq j_1$ such that both rows have a $1$ in column $j_2$.
Hence, $D_2$ is a 
%forbidden 
subconfiguration of $A$.

With regard to assertion~\ref{item:3_def_adm} of Definition~\ref{lista:prop_adm}, let $f_1$ and $f_2$ be two rows labeled one with L and the other with R and colored with distinct colors. Let $f_3$ be a nonempty LR-row. Suppose that $f_3$ is disjoint with both $f_1$ and $f_2$. 
Hence, there is a column $l_1$ such that $f_1$ and $f_2$ have a $0$ and $f_3$ has a $1$. Moreover, either there are two distinct columns $j_1$ and $j_2$ such that the column $j_i$ has a $1$ in row $f_i$ and a $0$ in the other rows, for $i=1, 2$, or there is a column $l_2$ such that $f_1$ and $f_2$ both have a $1$ in column $l_2$ and $f_3$ has a $0$.
If the latter holds, we find $D_2$ as a subconfiguration induced by the rows $f_1$ and $f_2$. 
If instead there are two distinct columns $j_1$ and $j_2$ as described above, then we find $D_3$ as a 
%minimal forbidden 
subconfiguration of $A$.

For assertion~\ref{item:4_def_adm} of Definition~\ref{lista:prop_adm}, let $f_1$ and $f_2$ be two rows labeled with L and colored with distinct colors, and let $r$ be an LR-row. If $f_1$ and $f_2$ are not nested, then we find $D_0$ as a subconfiguration. Suppose that $f_1$ and $f_2$ are nested. If neither $f_1$ nor $f_2$ is contained in $r$, then there is a column $j$ in which $f_1$ and $f_2$ have a $1$ and $r$ has a $0$.
Thus, $D_4$ is a 
%forbidden 
subconfiguration of $A$.

For assertion~\ref{item:5_def_adm} of Definition~\ref{lista:prop_adm}, let $f_1$ and $f_2$ be two non-disjoint rows colored with distinct colors, $f_1$ labeled with L and $f_2$ labeled with R. Since they are non-disjoint, there is at least one column $j$ in which both rows have a $1$. Suppose that for every such column $j$, there is an LR-row $f$ having a $1$ in that column. Then, we find $D_5$ as a subconfiguration of $A$.

For assertion~\ref{item:6_def_adm} of Definition~\ref{lista:prop_adm}, let $f$ be an LR-row and let $f_1$ and $f_2$ be two rows labeled with L and R respectively, and colored with the same color. If $f_1$ and $f_2$ are not disjoint, then we find $D_1$ as a subconfiguration. Suppose that $f_1$ and $f_2$ are disjoint.
If neither $f_1$ nor $f_2$ is contained in $f$, then there are columns $j_1 \neq j_2$ such that $f_i$ has a $1$ and $f$ has a $0$, for $i=1, 2$. Thus, we find $D_6$ as a subconfiguration of $A$. If instead $f_1$ and $f_2$ are both labeled with L and colored with distinct colors, and neither is contained in $f$, then we find $D_4$  or $D_0$ as a subconfiguration in $A$ depending on whether or not $f_1$ and $f_2$ are nested.

Suppose that $A$ satisfies~assertion~\ref{item:7_def_adm} of Definition~\ref{lista:prop_adm}. Let $f_1$ be a row labeled with L, and $f_2$ and $f_3$ two distinct LR-rows such that none of them is nested in the others. Thus, we have three possibilities. If there are three columns $j_i$, $i=1,2,3$, such that $f_i$ has a $1$ and the other rows have a $0$, then we find $D_7$ as a subconfiguration of $A$.
If instead there are three rows $j_i$, $i=1,2,3$, such that $f_i$ and $f_{i+1}$ have a $1$ and $f_{i+2}$ has a $0$ in $j_i$ (where subdindices are modulo 3), then we find $D_8$ as a subconfiguration.
The remaining possibility is that there are 4 columns $j_1, j_2, j_3, j_4$ such that $f_1$ and $f_2$ have a $1$ and $f_3$ has a $0$ in $j_1$, $f_1$ has a $1$ and $f_2$ and $f_3$ have a $0$ in $j_2$, $f_3$ has a $1$ and $f_1$ and $f_2$ have a $0$ in $j_3$, and $f_2$ and $f_3$ have a $1$ and $f_1$ has a $0$ in $j_4$. Moreover, since all three rows are pairwise non-disjoint, either there is a fifth column for which $f_1$ and $f_3$ have a $1$ and $f_2$ has a $0$ (in which case we find $D_8$ as a subconfiguration), or $f_2$ has a $1$ and $f_1$ and $f_3$ have a $0$ (in which case we have $D_7$ as a subconfiguration), or all three rows have a $1$ in such column. In this case, we find $D_9$ as a subconfiguration of $A$.

For assertion~\ref{item:8_def_adm} of Definition~\ref{lista:prop_adm}, let $f_1$ and $f_2$ be two rows labeled with L and R, respectively, and colored with distinct colors. Let $f_3$ and $f_4$ be two LR-rows such that $f_1$ is neither disjoint nor contained in $f_3$ and $f_2$ is neither disjoint nor contained in $f_4$. 
If $f_1$ is also not contained in $f_4$ or $f_2$ is not contained in $f_3$, then we find $D_9$ as a subconfiguration. Thus, suppose that $f_1$ is contained in $f_4$ and $f_2$ is contained in $f_3$. Moreover, we may assume that for any column such that $f_1$ and $f_3$ have a $1$, $f_2$ has a $0$ (and analogously for $f_2$ and $f_4$ having a $1$ and $f_1$), for if not we find $D_5$ as a subconfiguration.
Hence, there is a column $j_1$ in $A$ having a $1$ in $f_1$ and $f_4$ and having a $0$ in $f_3$ and $f_2$, and another column $j_2$ having a $1$ in $f_2$ and $f_3$ and having a $0$ in $f_1$ and $f_4$. Moreover, since $f_1$ and $f_3$ are not disjoint and $f_2$ and $f_4$ are not disjoint (and $f_1$ is nested in $f_4$ and $f_2$ is nested in $f_3$), there are columns $j_3$ and $j_4$ such that $f_1$, $f_3$ and $f_4$ have a $1$ and $f_2$ has a $0$ in $j_3$ and  $f_2$, $f_3$ and $f_4$ have a $1$ and $f_1$ has a $0$. Therefore, we find $D_{10}$ as a subconfiguration of $A$.

It follows by using a similar argument as for the previous assertions that, if $A$~does not satisfy assertion \ref{item:9_def_adm}, then that there is $D_{11}$, $D_{12}$ or $D_{13}$ as a subconfiguration in $A$, and this finishes the proof.
\end{proof}

\begin{corollary}
Every enriched matrix $A$ that admits a total block bi-coloring contains none of the matrices in $\mathcal{D}$. Equivalently, if $A$ admits a total block bi-coloring, then every property listed in~\ref{lista:prop_adm} hold. 
\end{corollary}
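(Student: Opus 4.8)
The plan is to deduce the corollary directly from the Lemma above together with Definitions~\ref{def:LR-orderable} and~\ref{def:2-nested}. First I would record that the two displayed statements are equivalent. The ten items of the preceding Lemma characterise properties~(\ref{item:0_def_adm})--(\ref{item:9_def_adm}) of Definition~\ref{lista:prop_adm} one by one, and a quick bookkeeping over those items shows that every member $D_0,\dots,D_{13}$ of $\mathcal{D}$ occurs in at least one of the characterisations. Hence ``$A$ is $\mathcal{D}$-free'' holds if and only if all of properties~(\ref{item:0_def_adm})--(\ref{item:9_def_adm}) hold, and it suffices to prove the second formulation.

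To do this I would assume that $A$ admits a total block bi-coloring, fix an LR-ordering $\Pi$ as in Definition~\ref{def:LR-orderable} and a colouring of the blocks of $A$ with respect to $\Pi$ satisfying conditions~(\ref{item:2nested1})--(\ref{item:2nested9}) of Definition~\ref{def:2-nested}, and then verify each property~(\ref{item:0_def_adm})--(\ref{item:9_def_adm}) as an intrinsic statement about the rows of $A$. The key observation is that each property was extracted precisely as a necessary condition read off from the colouring conditions, so the verification can be carried out directly under $\Pi$ and no restriction-to-subconfiguration argument is needed. I would also use condition~(\ref{item:2nested2}), which identifies the pre-assigned colour of a nonempty labeled row with the colour of its unique block, so that the row-colours appearing in Definition~\ref{lista:prop_adm} match the block-colours governed by conditions~(\ref{item:2nested1})--(\ref{item:2nested9}).

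For the properties involving two rows the argument is short. Since $\Pi$ is an LR-ordering, the $1$'s of every L-row (resp.\ R-row) form an initial (resp.\ final) segment of $\Pi$; two initial segments are always nested, which gives~(\ref{item:0_def_adm}). For~(\ref{item:1_def_adm}) I would compare an L-block sitting at the left with an R-block sitting at the right: if they are not disjoint, condition~(\ref{item:2nested5}) forces distinct colours, contradicting a common colour. Properties~(\ref{item:2_def_adm})--(\ref{item:6_def_adm}) are handled in the same spirit, invoking conditions~(\ref{item:2nested3}),~(\ref{item:2nested4}),~(\ref{item:2nested5}) and~(\ref{item:2nested8}) to rule out exactly the configurations $D_2,D_3,D_4,D_5,D_6$ analysed in the discussion preceding Definition~\ref{lista:prop_adm}.

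The main obstacle will be properties~(\ref{item:7_def_adm})--(\ref{item:9_def_adm}), which involve three or more rows and, crucially, pairs of LR-rows: an LR-row satisfies the consecutive-ones property only on its complement under $\Pi$, so it may contribute \emph{both} an L-block and an R-block, and its nesting behaviour is subtler than that of a labeled row. Here I would apply condition~(\ref{item:2nested9}) pairwise to the LR-rows to pin down the relative placement and colours of their L- and R-blocks, and then argue by a short case analysis that three pairwise non-nested rows of the relevant types cannot coexist with the consecutiveness of the complements forced by $\Pi$, reproducing the contradictions described for $D_7,\dots,D_{13}$. Once all ten properties are verified, the equivalence established in the first paragraph, via the preceding Lemma, yields that $A$ contains none of the matrices in $\mathcal{D}$, completing the proof.
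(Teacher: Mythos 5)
Your proposal is correct in outline, but it runs in the opposite direction to the paper's (implicit) proof, and the difference is worth spelling out. The paper never verifies the ten properties of Definition~\ref{lista:prop_adm} on $A$ directly: its proof of this corollary is the discussion \emph{preceding} that definition, where each matrix $D_0,\dots,D_{13}$ of Figure~\ref{fig:forb_D} is shown, one at a time, not to admit a total block bi-coloring (LR-orderability alone kills $D_0$, $D_2$, $D_3$, $D_7$, $D_{11}$; assertions~\ref{item:2nested1}, \ref{item:2nested3}, \ref{item:2nested4}, \ref{item:2nested5}, \ref{item:2nested8} and~\ref{item:2nested9} of Definition~\ref{def:2-nested} handle the rest). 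Combined with the heredity of bi-colorability under subconfigurations --- a fact the paper invokes explicitly only afterwards, for $\mathcal{S}$ and $\mathcal{P}$ --- this gives ``$A$ is $\mathcal{D}$-free'', and the preceding lemma then supplies the ``equivalently'' clause. You instead verify the properties intrinsically on $A$ under $\Pi$ and recover $\mathcal{D}$-freeness from the lemma. Your route has a real advantage: it bypasses the heredity step, which the paper never proves (one would have to check that an LR-ordering and a block bi-coloring actually restrict to subconfigurations, which is not completely immediate for conditional assertions such as~\ref{item:2nested8}). The price is that you use the ``properties imply $\mathcal{D}$-free'' direction of the lemma, which is exactly the direction the paper's proof of that lemma leaves implicit (it only derives a forbidden configuration from the failure of a property), so you are trusting the lemma's statement rather than anything that is actually proved; the paper's route only needs the proved direction.

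One concrete soft spot in your sketch of properties~(\ref{item:7_def_adm})--(\ref{item:9_def_adm}): the claim that three pairwise non-nested rows of the relevant types ``cannot coexist with the consecutiveness of the complements forced by $\Pi$'' is false if it is the ordering that is supposed to do the work. Three LR-rows, each a prefix together with a suffix under $\Pi$ (for instance $\{1,2,3,5\}$, $\{1,2,4,5\}$, $\{1,3,4,5\}$ on five columns, whose complements are singletons), are pairwise non-nested yet perfectly compatible with an LR-ordering. What rules such configurations out is the colouring, not the consecutiveness: conditions~\ref{item:2nested1} and~\ref{item:2nested9} impose equalities and inequalities among the colours of the L- and R-blocks that amount to $2$-colouring an odd cycle, and when a row labeled L or R is involved (the $D_9$, $D_{10}$ cases) conditions~\ref{item:2nested3} and~\ref{item:2nested8} must be brought in as well. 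Since you do invoke condition~\ref{item:2nested9} pairwise before your case analysis, this is a fixable misstatement rather than a fatal gap, but as written it understates where the actual work lies --- which is precisely the part the paper itself treats most briefly.
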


Another example of families of enriched matrices that do not admit a total block bi-coloring are $\mathcal{S}$ and $\mathcal{P}$, which are the matrices shown in Figures~\ref{fig:forb_S} and~\ref{fig:forb_P}, respectively.
Therefore, since the existence of a total block bi-coloring is a property inherited by subconfigurations, if an enriched matrix $A$ admits a total block bi-coloring, then $A$ contains none of the matrices in $\mathcal{S}$ or $\mathcal{P}$. 
With this in mind, we give the following definition which is also a characterization by forbidden subconfigurations.

\begin{definition} \label{teo:caract_admissible}
	Let $A$ be an enriched matrix. We say $A$ is \emph{admissible} if and only if $A$ is $ \{ \mathcal{D}, \mathcal{S}, \mathcal{P} \}$-free.
\end{definition}

\subsection{Partially 2-nested matrices} \label{subsec:part2nested}

This section is organized as follows. First, we give some definitions that will help us obtain a characterization of LR-orderable matrices (see Definition~\ref{def:LR-orderable}) by forbidden subconfigurations. 
Afterwards, we define and characterise partially $2$-nested matrices, which are those enriched matrices that admit an LR-ordering and for which the given coloring of the labeled rows of $A$ induces a partial block bi-coloring.

\begin{definition}
A \emph{tagged matrix} is a matrix $A$, each of whose rows is either uncolored or colored with blue or red, together with a set of at most two dis\-tin\-guished columns of $A$. These dis\-tin\-guished columns will be refered to as \emph{tag columns}.
\end{definition}

\begin{definition}  \label{def:tagged_matrixA}
Let $A$ be an enriched matrix. The \emph{tagged matrix of $A$}, denoted by $A_\tagg$, is a tagged matrix whose underlying matrix is obtained from $A$ by adding two columns, $c_L$ and $c_R$, such that:
\begin{enumerate}[(1)]
\item the column $c_L$ has a $1$ if $f$ is labeled L or LR and $0$ otherwise, 
\item the column $c_R$ has a $1$ if $f$ is labeled R or LR and $0$ otherwise, and 
\item the set of distinguished columns of $A_{\tagg}$ is $\{ c_L, c_R\}$. 
\end{enumerate}
We denote $A^*_\tagg$ to the tagged matrix of $A^*$ (recall Definition~\ref{def:A*}). By simplicity we will consider column $c_L$ as the first and column $c_R$ as the last column of $A_\tagg$ and $A^*_\tagg$. We consider all the rows of $A_\tagg$ and $A^*_\tagg$ colored with the same (or no) color as in $A$.
%\todo{Falta explicar qu\'e pasa con los colores}. 
\end{definition}

\begin{figure}[h!]
	\begin{align*}
		A_\tagg = \scriptsize{ \bordermatrix{& c_L \hspace{7mm}  c_R \cr
		& \pmb 1 1 0 0 0 1 \pmb 1 \cr
		& \pmb 1 1 1 0 0 1 \pmb 1 \cr
	 	& \pmb 0 0 1 1 0 0 \pmb 0 \cr
		& \pmb 1 1 1 1 0  0  \pmb 0 \cr
		& \pmb 1 0 0 0 0 0 \pmb 1 \cr
		& \pmb 0 0  0  1 1 1 \pmb 1 }\ 
		\begin{matrix} 
   \\  \\ \\  \textcolor{dark-red}{\bullet} \\ \textcolor{blue}{\bullet} \\ \textcolor{blue}{\bullet} 
\end{matrix}}
		&&
		A^{*}_\tagg = \scriptsize{ \bordermatrix{& c_L \hspace{7mm} c_R \cr
		& \pmb 0 \pmb 1 \pmb 1 \pmb 1 \pmb  1 \pmb 1  \pmb 1 \cr
		& \pmb 1 \pmb 1 \pmb 1 \pmb 1 \pmb 1 \pmb 1 \pmb 0 \cr
		& \pmb 0 0 1 1  1 0  \pmb 0 \cr
		& \pmb 0 0 0 1 1 0  \pmb 0 \cr
	 	& \pmb 0 0 1 1 0 0 \pmb 0 \cr
		& \pmb 1 1 1 1 0 0 \pmb 0 \cr
		& \pmb 0 0 0 1 1 1 \pmb 1  }\ 
		\begin{matrix} 
   \\  \\ \\  \\ \textcolor{dark-red}{\bullet} \\ \textcolor{blue}{\bullet} \\ \textcolor{blue}{\bullet} 
\end{matrix}}
	\end{align*}
	\caption{Example of $A_\tagg$ and $A^*_\tagg$ for the matrix $A$ in Figure~\ref{fig:example_LR-ord_a}. The first two bold rows of $A^*_\tagg$ are its distinguished rows, added by definition.} \label{fig:example_tagg}
\end{figure}

%\todo{Agregar ejemplo de A, $A^*$ y $A^*_{\tagg}$ (tesis)}

The following remarks allow us to simplify the proof of the characterization of LR-orderable matrices.

\begin{remark}
	If $A^*_{\tagg}$ has the C$1$P, then the distinguished rows force the tag columns $c_L$ and $c_R$ to be the first and last columns of $A^*_{\tagg}$, respectively. %\todo{(ver)}.
\end{remark}

\begin{remark}
	An admissible matrix $A$ is LR-orderable if and only if the tagged matrix $A^*_{\tagg}$ has the C$1$P.%\todo{no coincide con el lema que sigue (ver)}
\end{remark}

\begin{lemma} \label{lema:LR-orderable_caract_bymatrices}
	An admissible matrix $A$ is LR-orderable if and only if the tagged matrix $A^*_{\tagg}$ contains neither Tucker matrices nor the tagged matrices of the family $\mathcal{M}$ depicted in Figure~\ref{fig:forb_LR-orderable} as subconfigurations.  
\end{lemma}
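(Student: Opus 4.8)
The plan is to reduce the statement to Tucker's theorem and then to translate its conclusion from plain submatrices of $A^*_\tagg$ into labelled subconfigurations. By the preceding Remark (the one stating that an admissible matrix $A$ is LR-orderable if and only if $A^*_\tagg$ has the C$1$P), it suffices to characterise when $A^*_\tagg$ has the C$1$P; and by Theorem~\ref{teo:tucker} this holds precisely when $A^*_\tagg$ contains no Tucker matrix (Figure~\ref{fig:tucker_matrices}) as a submatrix up to permutations of its rows and columns. Only the matrices of Figure~\ref{fig:tucker_matrices} themselves arise here, not their transposes, since the relevant C$1$P permutes the columns so that the $1$'s of each row become consecutive. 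Thus the whole task is to prove the equivalence: $A^*_\tagg$ has a Tucker matrix as a submatrix if and only if it contains a Tucker matrix or the tagged matrix $M^*_\tagg$ of some member $M$ of $\mathcal{M}$ (Figure~\ref{fig:forb_LR-orderable}) as a subconfiguration. The dichotomy behind the two forbidden families is according to whether a Tucker submatrix avoids the two tag columns $c_L,c_R$ and the two distinguished all-ones rows of $A^*_\tagg$ (such submatrices appear as genuine Tucker subconfigurations) or meets them (these are exactly what the tagged members of $\mathcal{M}$ record on the original columns of $A$).

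For the direction asserting that the listed subconfigurations obstruct LR-orderability, I would argue as follows. A Tucker subconfiguration of $A^*_\tagg$ is in particular a Tucker submatrix, so $A^*_\tagg$ fails the C$1$P and $A$ is not LR-orderable. For each $M\in\mathcal{M}$ I would verify by direct inspection that its associated tagged matrix $M^*_\tagg$ already contains a Tucker matrix as a submatrix: complementing the LR-rows, adjoining the two all-ones distinguished rows, and appending the tag columns $c_L,c_R$ turns each $M$ into the corresponding Tucker matrix $M_{II}$, $M_{III}$, $M_{IV}$ or $M_V$ (this is the content of the indices $2,3,4,5$ in the naming of $\mathcal{M}$), a verification handled uniformly in the parameter $k$. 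Consequently any occurrence of $M^*_\tagg$ as a subconfiguration of $A^*_\tagg$ carries a Tucker submatrix with it, so again the C$1$P fails.

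The substantive direction is the converse. Assuming $A$ is not LR-orderable, $A^*_\tagg$ lacks the C$1$P, so it contains some Tucker matrix $T$ as a submatrix, and I would case split on how the rows and columns of $T$ meet the tag columns $c_L,c_R$ and the two distinguished all-ones rows. If $T$ meets none of these, then $T$ lies entirely in the part of $A^*_\tagg$ coming from $A$ (with its LR-rows complemented), and it is therefore a Tucker subconfiguration, settling that subcase. Otherwise $T$ uses at least one tag column or one distinguished row, and here I would exploit their rigid structure: a tag column carries a $1$ exactly in the rows labelled accordingly, and each distinguished row is all-ones on the original columns, so the rows and columns of $T$ coinciding with them are forced. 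Running through the Tucker types $M_I(k),\dots,M_V$ and the admissible positions of these forced rows and columns, each surviving configuration is identified with the tagged matrix $M^*_\tagg$ of a specific member $M\in\mathcal{M}$, the member being determined by the Tucker type of $T$ together with which of $c_L$, $c_R$ and the distinguished rows participate.

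The main obstacle, and where admissibility is used decisively, is keeping this case analysis both finite and complete. Admissibility, i.e.\ $\{\mathcal{D},\mathcal{S},\mathcal{P}\}$-freeness, is exactly what prunes the degenerate embeddings: for instance property~(\ref{item:0_def_adm}) of Definition~\ref{lista:prop_adm} (equivalently $D_0$-freeness) forbids two non-nested L-rows or two non-nested R-rows, so at most one L-labelled and one R-labelled row can play the role of a given row of $T$, while the remaining $\mathcal{D}$-, $\mathcal{S}$- and $\mathcal{P}$-conditions eliminate the patterns in which a Tucker submatrix would use the tags in a way not recorded by $\mathcal{M}$. The care needed is to confirm that every combination of Tucker type and interaction pattern with $\{c_L,c_R\}$ and the distinguished rows is either ruled out by admissibility or matched to a member of $\mathcal{M}$; this exhaustive matching, rather than any single clever step, is the bulk of the work.
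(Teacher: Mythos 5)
Your proposal follows essentially the same route as the paper's own proof: reduce via the preceding Remark to the C$1$P of $A^*_{\tagg}$, invoke Theorem~\ref{teo:tucker}, and then case-split on how a Tucker submatrix meets the tag columns and the distinguished rows, using admissibility (in particular $D_0$-, $D_1$-, $D_2$-freeness and the $\mathcal{S}$-type configurations) to prune the possible embeddings and matching each surviving pattern to a member of $\mathcal{M}$, whose indices indeed encode the corresponding Tucker types. The paper's proof is precisely this case analysis carried out in full (its Cases 1 and 2, running over $M_V$, $M_I$, $M_{II}(k)$, $M_{III}(k)$, $M_{IV}$ and the admissible tag-column positions), so your outline is correct and the exhaustive matching you defer is exactly the substance of the published argument.
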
 

\begin{proof}
$\Rightarrow )$ This follows from the last remark and Theorem~\ref{teo:tucker}.
%\todo{Falta enunciar antes el teorema de Tucker y citarlo ac\'a para explicar lo que pasa}.

$\Leftarrow )$ Suppose that the tagged matrix $A^*_{\tagg}$ does not contain any of the above listed matrices as subconfigurations, and still the C$1$P does not hold for the rows of $A^*_{\tagg}$. 

Hence, there is a Tucker matrix $M$ such that $M$ is a subconfiguration of $A^*_{\tagg}$. Notice that, if $M$ does not have any tag column of $A^*_{\tagg}$, then $M$ is a subconfiguration of $A$. Hence, $A$ does not have the C$1$P and therefore the result follows. Henceforth, we assume that at least one column of $M$ is a tag column of $A^*_{\tagg}$. % we conclude that $A$ is not LR-orderable.
Suppose without loss of generality that, if $M$ intersects only one tag column, then this tag column is $c_L$, since the analysis is symmetric if assumed otherwise and gives as a result in each case the dual matrix.

\vspace{3mm}
\begin{mycases}
\case \textit{Suppose first that $M$ intersects one or both of the distinguished rows.} 
Thus, $M$ %(i.e., the matrix without the tags\todo{without the tags? $M$ no tiene tags, no?}) 
is \textit{$M_V$, $M_I(3)$, or $M_{II}(k)$ for some $k \geq 3$.} We consider each case separately. 

\vspace{2mm}
\subcase $M= M_V$.
In this case, the distinguished row is $(1,1,1,1,0)$ and thus the last column of $M$ %\todo{cu\'al last column?} 
is a tag column. 
Hence we find $M_5'$ in $A^*_{\tagg}$, %\todo{pero no es que $M=M_{V}$?}, 
which results in a contradiction. 
 
\subcase $M=M_I(3)$. 
If $(1,1,0)$ is a distinguished row, then we find $D_0$ as a %\todo{forbidden?} 
subconfiguration given by the second and third rows. It is symmetric if the distinguished row is either the second or the third row, and therefore this case is not possible.

\subcase $M= M_{II}(k)$. 
In this case, the distinguished rows of $A^*_{\tagg}$ can correspond only to the first and the last row of $M$. %\todo{Falta explicar de qui\'en son las rows y las columnas, ac\'a y en general en toda la proof}
Suppose only the first row $(0, 1, \ldots, 1)$ of $M$ corresponds to a distinguished row of $A^*_{\tagg}$. Thus, the first column of $M$ is precisely a tag column of $A^*_{\tagg}$. 
Hence, $M_2'(k)$ is a subconfiguration of $A_{\tagg}$, and this results in a contradiction. The same holds if instead the last row is the sole distinguished row.

Finally, suppose both the first and the last row of $M$ correspond to the distinguished rows of $A^*_{\tagg}$. If this is the case, then the columns $1$ and $k-1$ of $M$ correspond to the tag columns of $A^*_{\tagg}$.
Suppose first that $M = M_{II}(4)$. In this case, every row of $M$ is colored, since every row corresponds to a row of $A$ labeled with either L or R. Moreover, the first and second row of $M$ are colored with distinct colors, for if not we find $D_1$ as a subconfiguration of $A$. The same holds for the second and third row of $M$, and also for the third and fourth row of $M$.  
However, this implies that the second and third row of $M$ induce $D_2$ in $A$, hence this case is not possible.

If instead $M = M_{II}(k)$ for $k \geq 5$, then $M_2''(k)$ is a subconfiguration of $A^*_{\tagg}$, and thus we reach a contradiction. 
 
\case \textit{Suppose that $M$ does not intersect any distinguished rows of $A^*_{\tagg}$.}
%If $M$ does not have any tag column of $A^*_{\tagg}$, then $M$ is a subconfiguration of $A$. Hence, $A$ does not have the C$1$P and therefore we conclude that $A$ is not LR-orderable. %is a Tucker matrix. %\todo{$M$ is a Tucker matrix por hip\'otesis, no?}.

Suppose first that exactly one of the columns in $M$ is a tag column.

\subcase $M= M_I(k)$.
Notice that, if any of the columns of $M$ is a tag column of $A^*_{\tagg}$, then we find $D_0$ as a subconfiguration of $A$, which results in $A$ not being admissible and thus reaching a contradiction.

\subcase $M=M_{II}(k)$. 
As in the previous case, some of the columns of $M$ cannot be tag columns of $A^*_{\tagg}$. If there is only one tag column, then he only remaining possibilities for tag columns in $M$ are column $1$ or column $k-1$, for in any other case we find $D_0$ as a subconfiguration of $A$. Analogously, if instead $M$ intersects both tag columns of $A^*_{\tagg}$, then such columns are also columns $1$ and $k-1$.
However, if $c_L$ is either column $1$ or column $k-1$, then $M_2''(k)$ is a subconfiguration of $A^*_{\tagg}$. 
Analogously, if $c_R$ is either column $1$ or $k-1$ of $M$, then we find the dual matrix of $M_2'(k)$ as a subconfiguration.

Finally, suppose that two columns are tag columns.
Notice that the first and second rows of $M$ are colored with distinct colors, for if not we find $D_1$ as a subconfiguration of $A$. The same holds for the last two rows of $M$.
Hence, if $k = 4$, then we find $D_2$ as a subconfiguration od $A$ given by the second and third rows. If instead $k > 5$, then $M_2''(k)$ is a subconfiguration of $A^*_{\tagg}$, which results once more in a contradiction.

\subcase $M=M_{III}(k)$. 
In this case, the only possibilities for tag columns in $M$ are column $1$, column $k-1$ and column $k$, for if not we find $D_0$ as a subconfiguration of $A$. %Once more, it is easy to see that we can reorder the columns in such a way to have the same disposition of the rows with column $k-1$ or column $k$ replacing column 1\todo{?}.

Suppose first that the tag column of $A^*_{\tagg}$ is the first column of $M$. In that case, we find $M_3'(k)$ as a subconfiguration of $A^*_{\tagg}$, which also results in a contradiction. % since $A$ is admissible\todo{$M$ o $A$?}.
If instead the tag column is column $k$, then we use an analogous reasoning to find $M_3''(k)$ as a subconfiguration of $A^*_{\tagg}$ and thus reaching a contradiction.

Suppose now that both the first column and the last column of $M$ are tag columns.
Since $A$ is admissible, this case is not possible for the first and last row induce $D_1$ or $D_2$ as subconfigurations, depending on whether the rows are colored with the same color or with distinct colors, respectively.

\subcase $M=M_{IV}$. 
In this case, the columns of $M$ that could be tag columns are column $1$, column $3$ and column $5$, for if any other column of $M$ is a tag column of $A^*_{\tagg}$, we find $D_0$ as a subconfiguration of $A$, thus contradicting the hypothesis of admissibility of $A$. 
Further\-more, the choice of the tag column is symmetric since there is a reordering of the rows that allows us to obtain the same matrix if the tag column is either column $1$, column $3$ or column $5$, regardless of the choice of the column. 
Hence, there are two possibilities: when column $1$ is the only tag column of $A^*_{\tagg}$ in $M$, and when the two tag columns of $A^*_{\tagg}$ are columns $1$ and $3$ of $M$.
If column $1$ is the only tag column, then we find $M_4'$ as a subconfiguration of $A^*_{\tagg}$.
If instead the columns $1$ and $3$ are both tag columns, then the first and the second row of $M$ are colored with the same color, for if not there is $S_3(3)$ as a subconfiguration of $A$ and this is not possible since $A$ is admissible. Thus, in this case we find $M_4''$ as a subconfiguration of $A^*_{\tagg}$. 

\subcase $M=M_{V}$.
Once more and using the same argument, the only columns that could be tag columns of $A^*_{\tagg}$ are columns $2$, $3$ or $5$. Moreover, if the second column of $M$ is the sole tag column, then there is a reordering of the rows such that the matrix obtained is the same as the matrix when the third column is the tag column.
If column $5$ is the only tag column, then we find $M_5'$ as a subconfiguration as in Case 1.1.
If instead column $2$ is the only tag column, then the first and second rows of $M$ have the same color, for if not we find $S_2(3)$ as a subconfiguration of $A$, and thus we find $M_5''$ as a subconfiguration of $A^*_{\tagg}$. 
Finally, if columns $2$ and $5$ are both tag columns of $A^*_{\tagg}$, then the first and last row of $M$ induce $D_2$ as a subconfiguration, disregarding the coloring of the rows and thus this case is also not possible. 

\end{mycases}

Therefore, we reached a contradiction by assuming that the C$1$P does not hold for $A^*_{\tagg}$ if $A^*_{\tagg}$ contains the listed subconfigurations. 
\end{proof}

%%%%%%%%%%%%%%
%(FIN) prueba de LR-sortable
%%%%%%%%%%%%%%

Let $A$ be an enriched matrix and let $A_{LR}$ be the enriched submatrix of $A$ consisting precisely of all the LR-rows of $A$. We give a useful property for this enriched submatrix when $A$ is admissible.

\begin{lemma} \label{lema:A_LR_2nested}
If $A$ is admissible, then $A_{LR}$ contains no $F_1(k)$ or $F_2(k)$, for every odd $k \geq 5$.
\end{lemma}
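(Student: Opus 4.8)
The matrices $F_1(k)$ and $F_2(k)$ (Figure~\ref{fig:forb_F}) are Tucker-type matrices $M_{II}(k)$ and $M_I(k)$ respectively; they are among the minimal matrices violating the C$1$P. The submatrix $A_{LR}$ consists only of the LR-rows of $A$. The plan is to argue that if either $F_1(k)$ or $F_2(k)$ appeared as a subconfiguration of $A_{LR}$, then its rows — all of them LR-rows — would violate one of the admissibility properties of Definition~\ref{lista:prop_adm}, contradicting that $A$ is $\{\mathcal{D},\mathcal{S},\mathcal{P}\}$-free.

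**Main approach.** The cleanest route is through property~(\ref{item:9_def_adm}): \emph{for each three LR-rows, two of them are nested.} By the Lemma just proved, this is equivalent to $A$ containing no $D_{11}$, $D_{12}$, or $D_{13}$ as a subconfiguration, which in turn is guaranteed by admissibility. So the heart of the argument is to show that both $F_1(k)$ and $F_2(k)$, when all their rows are regarded as LR-rows, contain three rows that are pairwise non-nested, i.e.\ contain one of $D_{11}$, $D_{12}$, $D_{13}$. First I would examine $F_2(k) = M_I(k)$: this is the circulant-like matrix whose rows are consecutive pairs $\{i,i{+}1\}$ (cyclically), so rows $1$, $2$, $3$ give the supports $\{1,2\}$, $\{2,3\}$, $\{3,4\}$ (for $k\geq 5$ these are genuinely distinct columns), and no two of these three are nested while each overlaps the next — this is exactly the pattern of $D_{13}$. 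Hence $F_2(k)$ restricted to three suitable rows is a $D_{13}$ subconfiguration, so admissibility forbids it inside $A_{LR}$.

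**Handling $F_1(k)$.** For $F_1(k) = M_{II}(k)$ I would similarly identify three rows that are pairwise non-nested. Reading off the displayed form of $F_1(k)$ in Figure~\ref{fig:forb_F}, the first row is almost full ($0\,1\cdots1$), the second is full-minus-last ($1\cdots1\,0$), and the lower rows form the same consecutive-pair pattern as before; picking the two ``long'' rows together with one short row, or three short rows, yields a triple realizing $D_{12}$ or $D_{13}$. The key point is that these rows overlap pairwise without containment, which is precisely what the $D$-matrices in property~(\ref{item:9_def_adm}) encode. Since every row of $F_1(k)$ and $F_2(k)$ is an LR-row inside $A_{LR}$, any such triple is an LR-only subconfiguration, hence an honest $D_{11}/D_{12}/D_{13}$ subconfiguration of $A$, contradicting admissibility.

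**Anticipated obstacle.** The only delicate part is bookkeeping: I must verify for the \emph{generic} odd $k\geq 5$ (not just a single small case) that a fixed choice of three rows is pairwise non-nested, and must confirm that the chosen triple's induced three-column pattern matches one of $D_{11}$, $D_{12}$, $D_{13}$ exactly up to permutation. This requires carefully writing out $F_1(k)$ and $F_2(k)$ from their definitions, selecting the three rows and the three (or fewer) relevant columns, and checking the $0$/$1$ incidences. I would organize this as: (1) recall that all rows of $A_{LR}$ are LR-rows, so property~(\ref{item:9_def_adm}) applies; (2) exhibit, for each of $F_1(k)$ and $F_2(k)$, an explicit triple of rows that is pairwise non-nested; (3) conclude via the characterization of~(\ref{item:9_def_adm}) by $\{D_{11},D_{12},D_{13}\}$ that such a triple cannot occur in an admissible $A$, so neither $F_1(k)$ nor $F_2(k)$ is a subconfiguration of $A_{LR}$.
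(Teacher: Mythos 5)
Your proposal is correct and takes essentially the same approach as the paper: for odd $k\geq 5$, extract three consecutive ``staircase'' rows of $F_1(k)$ or $F_2(k)$ (supports $\{1,2\},\{2,3\},\{3,4\}$ up to shift), note that as rows of $A_{LR}$ they are all LR-rows, and observe that they induce $D_{13}$, contradicting admissibility. One small caution: your alternative triple for $F_1(k)$ (the two long rows plus one short row) does not actually work, since every short row of $F_1(k)$ is nested in one of the two long rows, and $F_1(k)$, $F_2(k)$ are not literally the Tucker matrices $M_{II}(k)$, $M_I(k)$ -- but neither slip affects the argument, because the three-short-rows choice you also give is exactly the paper's.
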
 

\begin{proof}
Toward a contradiction, suppose that $A_{LR}$ contains $F_1(k)$ or $F_2(k)$ as a subconfiguration, for some odd $k \geq 5$. Since $k\geq5$, we find the following subconfiguration in $A$:
\begin{align*}
	\scriptsize{ \bordermatrix{ & \cr
		\textbf{LR} & 1 1 0 0 \cr
		\textbf{LR} & 0 1 1 0 \cr
		\textbf{LR} & 0 0 1 1 }\ }
\end{align*}
Since these three rows induce $D_{13}$, we reach a contradiction. 
It follows analogously that there is no $F_2(k)$ in $A_{LR}$. %Once more we find the previous submatrix in $A_{\tagg}$ and thus resulting in a contradiction for the last three rows induce an $D_{13}$ matrix in $A_{LR}$. 
Therefore, $A_{LR}$ contains neither $F_1(k)$ nor $F_2(k)$, for every odd $k \geq 5$. 
\end{proof}

\begin{remark} \label{rem:A_LR_2nested}
	It follows from Lemma~\ref{lema:A_LR_2nested} that, if $A$ is admissible, then there is a partition of the LR-rows of $A$ into two subsets $S_1$ and $S_2$ such that every pair of rows in each subset are either nested or disjoint. Moreover, since $A$ contains no $D_{11}$ as a subconfiguration, every pair of LR-rows that lie in the same subset $S_i$ are nested, for each $i=1,2$. Equivalently, the LR-rows in each subset $S_i$ are totally ordered by inclusion, for each $i=1,2$.
\end{remark}

When giving the guidelines to draw a circle model for any split graph $G=(K,S)$, not only is it important that the matrix $A(S_i,K_i)$ for each partition $K_i$ of $K$ results admissible and LR-orderable. We also need to ensure that there exists an LR-ordering in which every LR-row that is split into an L-block and an R-block satisfies a couple more properties than the ones given by the LR-ordering itself.
%that satisfies a couple more properties regarding on how every LR-row is split into its L-block and its R-block. 
The following definition states the necessary conditions for the LR-ordering that we need to consider to obtain a circle model. We call this a \emph{suitable LR-ordering}. The lemma that follows ensures that, if a matrix $A$ is admissible and LR-orderable, then we can always find a suitable LR-ordering for the columns of $A$.

\begin{definition} \label{def:suitable_ordering}
	An LR-ordering $\Pi$ is \emph{suitable} if the L-blocks of those LR-rows with exactly two blocks are disjoint with every R-block, the R-blocks of those LR-rows with exactly two blocks are disjoint with the L-blocks and, for each LR-row, the intersection with any U-block is empty with either its L-block or its R-block.
\end{definition}

\begin{lemma} \label{lema:hay_suitable_ordering}
If $A$ is admissible, LR-orderable and contains no $M_0$, $M_{II}(4)$, $M_V$ or $S_0(k)$ as a subconfiguration for every even $k \geq 4$, then it has at least one suitable LR-ordering. 
\end{lemma}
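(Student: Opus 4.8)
The plan is to start from an arbitrary LR-ordering, whose existence is guaranteed by LR-orderability (Lemma~\ref{lema:LR-orderable_caract_bymatrices}), and to show that the three defects forbidden by suitability can always be removed, using the hypotheses that $M_0$, $M_{II}(4)$, $M_V$ and $S_0(k)$ do not occur. First I would record the block geometry of any LR-ordering $\Pi$: every L-block is anchored at the first column, every R-block at the last column, every U-block is an arbitrary interval, and each LR-row either has a single block or splits into an L-block and an R-block separated by a genuine interior gap of $0$'s (the complement of the row being a single interval). I would also invoke Remark~\ref{rem:A_LR_2nested}, so that the LR-rows come in two inclusion chains; this controls how the interior gaps of distinct two-block LR-rows nest or are disjoint.

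Then I would set up an extremal argument: among all LR-orderings of $A$ (equivalently, among the orderings read off the PQ-tree of the C$1$P matrix $A^{*}_{\tagg}$), choose one minimizing a potential that counts, say, the number of column pairs witnessing an overlap between a left-anchored and a right-anchored block, together with the number of U-blocks meeting both sides of some LR-gap. The claim is that such a minimal ordering is suitable. Supposing it is not, exactly one of the three suitability conditions fails, and I would treat the three failures separately, in each case either producing a reversal of a segment of the ordering that keeps it an LR-ordering and strictly decreases the potential (contradicting minimality), or else extracting one of the forbidden subconfigurations.

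The case analysis is where the work lies. For a failure of the first (or, symmetrically, the second) condition, a two-block LR-row $r$ has its left-anchored L-block overlapping an R-block $B$ while $r$ still has a true interior gap; here I would use the admissibility constraints of Definition~\ref{lista:prop_adm} governing how L-, R-, and LR-rows nest or are disjoint to show that, once the overlap cannot be undone by a local reversal, the row $r$, the row of $B$, and a row witnessing the gap realize $M_0$ or $M_{II}(4)$ in the underlying matrix. For a failure of the third condition, a U-row spans the gap of an LR-row $r$, that is, its U-block meets both blocks of $r$; combining the anchoring of the two blocks of $r$ with a maximal ``staircase'' of further blocks forced around the gap, I would show that if this configuration cannot be straightened then it reproduces $M_V$ or, when the staircase has even length, the parametrized family $S_0(k)$.

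The main obstacle is precisely this last reduction to $S_0(k)$: controlling the alternating chain of blocks that accumulates around the interior gap of a two-block LR-row, and proving that an irreducible such chain is exactly the even-$k$ matrix $S_0(k)$, requires careful bookkeeping, because the repair moves available (reversing a Q-node segment, or sliding a gap to a boundary so as to merge an LR-row's two blocks into one) must be shown not to create new violations of the other two conditions. Keeping the three conditions simultaneously satisfied under these modifications, and pinning down the exact parity of the staircase that yields $S_0(k)$ with even $k$ rather than a configuration already excluded by admissibility, is the delicate part of the argument.
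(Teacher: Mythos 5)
Your overall skeleton --- take an LR-ordering, argue that a persistent defect of suitability must be \emph{blocked} by other rows, and extract a forbidden subconfiguration from an irreducible blocking configuration --- is essentially the strategy of the paper, which phrases the extremal step as a direct contradiction (suppose every LR-ordering is non-suitable; then no reordering removes the defect, so some non-LR-row blocks it) rather than via an explicit potential function, and which likewise leans on the two-chain structure of the LR-rows from Remark~\ref{rem:A_LR_2nested}. To that extent the plan is sound and not a different route.

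The genuine gap is that the substance of the proof is exactly the case analysis you defer, and the concrete predictions you do make about it come out wrong, which shows it was not carried out. In the paper's argument, the defect in which a U-block meets both blocks of an LR-row is the one that yields $M_0$ (single unlabeled blocking row), $M_{II}(4)$ (chained sequence of unlabeled blocking rows), and $M_V$ or $S_5(k)$, $S_6(k)$, $S_6'(3)$ (labeled blocking rows); the defect in which the L-block of an LR-row meets an R-block is the one that yields $S_0(k)$ or $S_8(k)$, depending on parity, as well as $S_1(k)$ and $S_4(k)$. Your proposal assigns these the other way around. More importantly, most irreducible blocking configurations yield none of the four matrices in the hypothesis: they yield admissibility-forbidden matrices --- $D_1$, $D_5$, $D_7$, $D_8$, $D_9$, $S_1(k)$, $S_3(3)$, $S_4(k)$, $S_5(k)$, $S_6(k)$, $S_6'(3)$, $S_8(k)$ --- and the contradiction in those branches is with admissibility, not with the exclusion of $M_0$, $M_{II}(4)$, $M_V$, $S_0(k)$. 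Consequently the case analysis must be organized by the type of the blocking row(s) (unlabeled, L, R, or LR) and by whether a single row or a chained sequence does the blocking, with each branch matched to a specific member of $\mathcal{D}\cup\mathcal{S}$; your sketch treats admissibility only as a side constraint governing nesting, so the central mechanism of the proof is missing. As written, the proposal restates the goal (an irreducible defect produces a forbidden matrix) rather than establishing it.
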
 

\begin{proof}
	Let $A$ be an admissible LR-orderable matrix. Toward a contradiction, suppose that every LR-ordering is non-suitable.
	If $\Pi$ is an LR-ordering of $A$, then either (1) there is a U-block $u$ and an LR-row $f_1$ such that $u$ is not disjoint with the L-block and the R-block of $f_1$, or (2) there is an LR-row $f_1$ such that its L-block is not disjoint with some R-block, since $\Pi$ is non-suitable. In both cases, there is no possible reordering of the columns to obtain a suitable LR-ordering. 	
	
	\vspace{1mm}
	Since $A$ is admissible, it follows from Lemma~\ref{rem:A_LR_2nested} that the LR-rows can be split into a two set partition such that the LR-rows in each set are totally ordered. Moreover, any two LR-rows for which the L-block of one intersects the R-block of the other are in distinct sets of the partition and thus the columns may be reordered by moving the portion of the block that one of the rows has in common with the other all the way to the right (or left). Hence, if two such blocks intersect and there is no possible LR-reordering of the columns, then there is at least one non-LR-row blocking the reordering.
	Throughout the proof and for simplicity, we will say that a row or block \emph{$a$ is chained to the left (resp.\ to the right) of another row or block $b$} if $a$ and $b$ overlap and $a$ intersects $b$ in column $l(b)$ (resp.\ $r(b)$). %\todo{explicar qu\'e son $l(b)$ y $r(b)$} esto ya esta def en las basic definitions
	
\begin{mycases}	
	\vspace{1mm}
	\case
	\textit{Let $a_1$ be the L-block of $f_1$ and $b_1$ be the R-block of $f_1$.
	Suppose first there is a U-block $u$ such that $u$ intersects both $a_1$ and $b_1$.}
	
	Let $j_1 = r(a_1)+1$, this is, the first column in which $f_1$ has a $0$, $j_2= r(a_1)$ and $j_3= l(b_1)$ in which both rows $f_1$ and $u$ have a $1$.
	Since it is not possible to rearrange the columns to obtain a suitable LR-ordering, in particular, there are two columns $j_4 < j_2$ and $j_5 > j_3$ in which $u$ has $0$, one before and one after the string of $1$'s of $u$. Moreover, there is at least one row $f_2$ distinct to $f_1$ and $u$ blocking the reordering of the columns $j_1$, $j_2$ and $j_3$. 
	\subcase Suppose $f_2$ is the only row blocking the reordering. Notice that $f_2$ is neither disjoint nor nested with $u$ and there is at least one column in which $f_1$ has a $0$ and $f_2$ has a $1$. We may assume without loss of generality that this is column $j_1$. Suppose $f_2$ is unlabeled.
	The only possibility is that $f_2$ overlaps with $u$, $a_1$ and $b_1$, for if not we can reorder the columns to obtain a suitable LR-ordering. In that case, we find $M_0$ as a subconfiguration in $A$.
	%Then, we find $M_I(3)$ in $A$ as a subconfiguration considering columns $j_2$, $j_1$ and $j_5$ (or $j_4$), since $f_2$ and $u$ overlap. 
	If instead $f_2$ is labeled with either L or R, then we find $S_6'(3)$ as a subconfiguration in $A$ considering columns $j_4$, $j_2$, $j_1$, $j_3$, $j_5$ and both tag columns. If $f_2$ is an LR-row and $f_2$ is the only row blocking the reordering, then either the L-block of $f_2$ is nested in the L-block of $f_1$ and the R-block of $f_2$ contains the R-block of $f_1$, or vice versa. However, in that case we can move the portion of the L-block of $f_1$ that intersects $u$ to the right and thus we find a suitable LR-ordering, therefore this case is not possible.
	
	\subcase Suppose now there is a sequence of rows $f_2, \ldots, f_k$ for some $k \geq 3$ blocking the reordering such that $f_i$ and $f_{i+1}$ overlap for each $i \in \{2, \ldots, k\}$. 
	Moreover, there is either --at least-- one row that overlaps $a_1$ or $b_1$. We may assume without loss of generality that $f_2$ is such a row and that $f_2$ and $b_1$ overlap.
	Suppose that $f_2$ and $f_3$ are unlabeled rows. Notice that, either all the rows are chained to the left of $f_2$ or to the right. Furthermore, since $A$ contains no $M_0$ as a subconfiguration and we assumed that $b_1$ and $f_2$ overlap, if $f_i$ is chained to the left of $f_2$, then $f_i$ is contained in $b_1$ for every $i \geq 3$, and if $f_i$ is chained to the right of $f_2$, then $f_i$ is contained in $u$ for every $3 \leq i <k$.
	In either case, we find $M_{II}(4)$ as a subconfiguration considering the columns $j_2$, $j_1$, $j_3$ and $j_5$. %en este caso encontramos tambien MII4.... es decir, encontramos S6''3 si alguna de las que interseca los limites izq o der de u tiene tres 1's, o hace algo similar a la tercera fila de S6''3, o, si no es ese caso, la primera (o la ultima) de la cadena esta incluida en uno de los bloques de f1, la segunda interseca el final de ese bloque y el extremo de u que tambien interseca a ese bloque, y ya con esas dos filas +f1 +f2 encuentro un MII4.
	%If instead $f_3$ is chained to the left of $f_2$, then there is at least one column in which both $f_2$ and $f_3$ have $1$ and $f_1$ has a $0$. Thus, we find $M_I(3)$ considering rows $f_1$, $f_2$ and $f_3$ and the columns as in the previous paragraph.
	Suppose that $f_2$ is the only labeled row in the sequence and that $f_2$ is labeled with R. If $u$ and $f_2$ overlap, then we find $S_6'(3)$ as a subconfiguration as in the previous paragraphs.
	Thus, we assume $u$ is nested in $f_2$. Since the sequence of rows is blocking the reordering, the rows $f_3, \ldots, f_k$ are chained one to one to the right and $f_k = u$, therefore we find $S_6(k)$ as a subconfiguration.
	The only remaining possibility is that there are two labeled rows in the sequence blocking the reordering. Since $A$ contains neither $D_1$ nor $S_3(3)$ as a subconfiguration, then either these two rows are labeled with the same letter and nested, or they are labeled one with L and the other with R and are disjoint. We may assume without loss of generality that $f_2$ and $f_k$ are such labeled rows.
	
	If $f_2$ and $f_k$ are both labeled with L, then necessarily one is nested in the other, for $\Pi$ is an LR-ordering. In that case, one has a $0$ in column $j_1$ and the other has a $1$ there, for if not we can reorder the columns moving $j_1$ --and maybe some other columns in which $f_1$ has a $0$-- to the right. Hence, in this case we find $S_5(k)$ as a subconfiguration induced by rows $f_1, f_2, \ldots, f_k$%\todo{se defini\'o `induced' para esto. 
	It is analogous if $f_2$ and $f_k$ are labeled with R.

	If instead $f_2$ and $f_k$ are labeled one with L and the other with R, then there are two possibilities. Either $f_2, \ldots, f_{k-1}$ are nested in $a_1$, or $f_2$ is chained to the right of $u$ and $f_3$ is chained to the left. In either case, if $f_2$ or $f_3$ have a $1$ in some column in which $f_1$ has a $0$ and $u$ has a $1$, then we find $S_6'(3)$ as a subconfiguration. If instead $f_3$ is nested in $a_1$ and $f_2$ is nested in $b_1$, then we find $M_V$ as a subconfiguration considering the columns $j_4$, $j_2$, $j_1$, $j_3$ and $j_5$.

	\vspace{2mm}	
	%\textbf{Case 2:}
	\case
	\textit{Suppose now that there is a row $f_2$ such that the L-block $a_1$ of $f_1$ and the R-block $b_2$ of $f_2$ are not disjoint.} Notice that, by definition of R-block, $f_2$ is labeled with either R or LR.
	Once more, we consider $j_1=r(a_1) + 1$, the first column in which $f_1$ has a $0$.

	Since $a_1$ and $b_2$ intersect, there is a column $j_2 < j_1$ such that $a_1$ and $b_2$ both have a $1$ in column $j_2$.
	\subcase Suppose first that there is exactly one row $f_3$ blocking the possibility of reordering the columns to obtain a suitable LR-ordering.
	Notice that, for a row to block the reordering of the columns, such row must have a $1$ in $j_2$ and at least one column with a $0$. We have three possible cases:

 	\subsubcase Suppose first that $f_3$ is unlabeled. If $f_2$ is labeled with LR and $f_3$ does not intersect the L-block of $f_2$, then we can move to the R-block of $f_1$ those columns in which $f_3$ has $0$ and $a_1$ has $1$. If $f_3$ intersects the L-block of $f_2$, then this is precisely as in the previous case. Thus, we assume $f_2$ is labeled with R. 
	If $f_3$ is not nested in either $f_1$ nor $f_2$, then there is a column $j_3$ in which $f_3$ and $f_2$ have a $1$ and $f_1$ has a $0$, and a column $j_4$ in which $f_3$ and $f_1$ have a $1$ and $f_2$ has a $0$. 
	In that case, we find $S_6(3)$ as a subconfiguration considering the columns $j_1$, $j_2$, $j_3$, $j_4$ and both tag columns.
	 If $f_3$ is nested in $f_2$, then we can rearrange the columns by moving to the right all the columns in which $a_1$ and $f_2$ both have $1$ and mantaining those columns in which $f_3$ has a $1$ together.
	 If instead $f_3$ is nested in $f_1$, then we find $S_6'(3)$ as a subconfiguration. 
	 
	\vspace{1mm}	
	\subsubcase Suppose now that $f_3$ is labeled with L. If $f_2$ is labeled with R, then $f_2$ and $f_3$ are colored with distinct colors, for if not we find $D_1$ as a subconfiguration. Thus, we find $D_5$ as a subconfiguration induced by $f_1$, $f_2$, $f_3$.
	 Moreover, notice that, if $f_3$ is also labeled with R, then it is possible to move all those columns of $a_1$ that have a $1$ and intersect $f_2$ (and $f_3$) in order to obtain a suitable LR-ordering and thus $f_3$ did not block the reordering.
	If instead $f_2$ is an LR-row, then we find either $D_7$, $D_8$ or $D_9$ as a subconfiguration, depending on where is the string of $0$'s in row $f_3$. Also notice that it is indistinct in this case if $f_3$ is labeled with R.
	\vspace{1mm} 
	 \subsubcase Suppose $f_3$ is labeled with LR. Since $A$ is admissible, if $f_2$ is an LR-row, then either $f_3$ is nested in $f_1$ or $f_3$ is nested in $f_2$ (we may assume this since it is analogous if $f_3$ contains $f_1$ or $f_2$: we will see that $f_3$ is not blocking the reordering). If $f_3$ is nested in $f_2$, then we can move the part of the L-block $a_1$ that intersects $b_2$ all the way to the right and then we have a suitable reordering. It is analogous if $f_3$ is nested in $f_1$.
	 If $f_2$ is labeled with R, then we may assume that $f_2$ is not nested in $f_3$, for if not we have a similar situation as in the previous paragraphs. The same holds if $f_1$ and $f_3$ are nested LR-rows. 
	 We know that the L-block $a_3$ of $f_3$ intersects the R-block $b_2$ %=r_2$\todo{$r_2$?}. 
	 Hence, in the column $j_3=r(a_3)+1$ the row $f_3$ has a $0$ and $f_2$ has a $1$, and in the column $j_4=l(b_2)-1$ the row $f_3$ has a $1$ and $f_2$ has a $0$. Moreover, since $f_1$ and $f_3$ are not nested, then there is a column greater than $j_2$ in which $f_1$ has a $0$ and $f_2$ and $f_3$ have a $1$. In this case, we find $D_8$ as a subconfiguration.

	\vspace{1mm}
	\subcase Suppose now that it is not possible to reorder the columns to obtain a suitable LR-ordering, since there is a sequence of rows $f_3, \ldots, f_k$, with $k > 3$, blocking --in particular-- the reordering of the columns $j_1=r(a_1)+1$ and $j_2=r(a_1)$.
	
	%If it is exactly one row that is blocking the reordering, then it is necessarily a row labeled with L or R that has a $1$ in $j_1$ and a $0$ in $j_2$, or viceversa. This follows from the fact that, since $A$ is admissible, then the LR-rows are split into two disjoint subsets in which they are all nested, for if not we find $D_{11}$, $D_{12}$ or $D_{13}$ as a subconfiguration. Hence, if the only row that is blocking the columns is an LR-row, then it is not nested in $f_1$ and thus the columns may be reordered such that the corresponding blocks do not intersect, for if not we find $D_{12}$ if $f_2$ is an LR-row and $D_8$ if $f_2$ is labeled with R.
	
%	It follows that $f_i$ and $f_{i+1}$ overlap for every $i \in \{3, \ldots, k-1 \}$ and $f_i$ is unlabeled for every $i \in \{4, \ldots, k-1\}$. Moreover, we may assume that $f_3$ is also unlabeled for if not, either this row has no influence on the blocking of the ordering or this can be reduced to the previous case. The row $f_k$ may be a labeled row. Notice that $f_3$ has a $1$ in column $j_1$ and a $0$ in column $j_2$, or viceversa.
	We may assume that the sequence of rows is either chained to the right --and thus $f_k$ is labeled with R-- or to the left --and thus $f_k$ is labeled with L, for if not we find $M_V$ as a subconfiguration as in the first case.
	Suppose that $f_2$ is labeled with R. If the sequence $f_3, \ldots, f_k$  is chained to the left, then we find $S_4(k)$ as a subconfiguration. If instead the sequence $f_3, \ldots, f_k$ is chained to the right, then we find $S_1(k)$ as a subconfiguration.
	Suppose now that $f_2$ is an LR-row. 
	Since the L-block of $f_1$ and the R-block of $f_2$ intersect, then these rows are not nested. Whether the sequence is chained to the right or to the left, we may assume that $f_3$ is nested in $a_1$ and is disjoint with $a_2$. Let $k$ be the number of $0$'s between the L-block and the R-block of $f_2$. Depending on whether $k$ is odd or even, we find $S_0(k)$ or $S_8(k)$, respectively, as a subconfiguration of the subconfiguration given by considering the rows $f_1, f_2, \ldots, f_{k+3}$. This finishes the proof.
\end{mycases}

\end{proof}

\begin{definition} \label{def:partially_2-nested}
	Let $A$ be an enriched matrix. %\todo{junto con un coloreo de los bloques, no?}. 
	We say $A$ is \emph{partially $2$-nested} if all the following assertions hold:
	 \begin{enumerate}
	 	\item $A$ is admissible, LR-orderable and contains no $M_0$, $M_{II}(4)$, $M_V$ or $S_0(k)$ as a subconfiguration for any even $k \geq 4$.
	 	\item Each pair of non-LR-rows colored with the same color are either disjoint or nested in $A$.
	 	\item If an L-block (resp.\ R-block) of an LR-row is colored, then any non-LR-row colored with the same color is either disjoint or contained in such L-block (resp.\ R-block).
	  	\item If an L-block (resp.\ R-block) of an LR-row $f_1$ is colored and there is a distinct LR-row $f_2$ for which its L-block (resp.\ R-block) is also colored with the same color, then $f_1$ and $f_2$ are nested in $A$.
	 \end{enumerate}
\end{definition}

\begin{remark} \label{obs:partially2nested_hasnogems}
The second assertion of the definition of partially $2$-nested implies that there are no monochromatic gems or monochromatic weak gems in $A$ because $A$ is admissible and thus any two labeled non-LR-rows induce no $D_1$ as a subconfiguration. 
Moreover, the third assertion implies that there are no monochromatic weak gems in $A$.
Furthermore, the last statement implies that there are no badly-colored doubly-weak gems in $A$.
\end{remark}

The following Corollary is a straightforward consequence of Remark~\ref{obs:partially2nested_hasnogems} and Lemma~\ref{lema:LR-orderable_caract_bymatrices}.

\begin{corollary} \label{cor:partially_2-nested_caract}
	An admissible matrix $A$ is partially $2$-nested if and only if $A$ contains as a subconfiguration no $M_0$, $M_{II}(4)$, $M_V$, monochromatic gems nor monochromatic weak gems nor badly-colored doubly-weak gems and the tagged matrix $A^*_{\tagg}$ does not contain any Tucker matrices, $M_2'(k)$, $M_2''(k)$, $M_3'(k)$, $M_3''(k)$ for $k\geq 3$, $M_4'$, $M_4''$, $M_5'$, $M_5''$ as a subconfiguration.
\end{corollary}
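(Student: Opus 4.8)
The plan is to establish the biconditional by matching, one by one, the four assertions of Definition~\ref{def:partially_2-nested} with the forbidden configurations in the statement, using admissibility of $A$ as a standing hypothesis so that Lemma~\ref{lema:LR-orderable_caract_bymatrices} and the results of this subsection are available.

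For the forward implication I would start from a partially $2$-nested $A$. Its first assertion says in particular that $A$ is LR-orderable, so Lemma~\ref{lema:LR-orderable_caract_bymatrices} gives at once that $A^*_{\tagg}$ contains no Tucker matrix and none of the matrices of $\mathcal{M}$; the same assertion also supplies that $A$ contains none of $M_0$, $M_{II}(4)$, $M_V$. The colored part is then handed to us by Remark~\ref{obs:partially2nested_hasnogems}, which records that assertions~2,~3 and~4 force the absence, respectively, of monochromatic gems, monochromatic weak gems and badly-colored doubly-weak gems. Assembling these facts yields every item on the right-hand side.

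For the converse I would assume the forbidden-configuration conditions and reconstruct the four assertions. Freeness of $A^*_{\tagg}$ from Tucker matrices and from $\mathcal{M}$ gives LR-orderability through Lemma~\ref{lema:LR-orderable_caract_bymatrices}, and the hypotheses on $M_0$, $M_{II}(4)$, $M_V$ complete assertion~1 except for the exclusion of $S_0(k)$ with $k$ even. I would obtain the latter for free: deleting the all-ones top row of $S_0(k)$ leaves exactly the Tucker matrix $M_I(k)$, whose rows are unlabeled and hence are copied verbatim into $A^*$, so an occurrence of $S_0(k)$ in $A$ would force a Tucker submatrix of $A^*_{\tagg}$, against our hypothesis. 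This is precisely why $S_0(k)$ may be omitted from the explicit list of forbidden subconfigurations of $A$. For assertions~2,~3 and~4 I would read Remark~\ref{obs:partially2nested_hasnogems} backwards: two equally colored non-LR-rows that are neither disjoint nor nested overlap and exhibit a monochromatic $0$-gem; an equally colored non-LR-row that is neither disjoint from nor contained in a colored L- or R-block of an LR-row exhibits a monochromatic weak gem; and two LR-rows with equally colored, non-nested L-blocks (or R-blocks) necessarily overlap---both blocks contain the extreme column---and so exhibit a badly-colored doubly-weak gem. Excluding these configurations forces the three assertions, so $A$ is partially $2$-nested.

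The substance of the argument is concentrated in this last step, the converse of Remark~\ref{obs:partially2nested_hasnogems}. The care required is the bookkeeping of which rows and blocks actually carry a color: in an enriched matrix only L-rows, R-rows and empty LR-rows are colored, and in the induced partial block bi-coloring that color is transferred to the row's L-, R- or U-block. I would use admissibility (absence of $D_0$ and $D_1$) to dispose of the degenerate situations in which two like-labeled rows are automatically nested or two oppositely-labeled equally colored rows are automatically disjoint, so that a genuine failure of assertion~2,~3 or~4 can only occur through the overlap/containment pattern that is exactly a $0$-gem, a label-restricted $1$-gem, or a badly-colored $2$-gem. Once this matching is checked, the corollary is just the conjunction of Lemma~\ref{lema:LR-orderable_caract_bymatrices} and the two directions of Remark~\ref{obs:partially2nested_hasnogems}.
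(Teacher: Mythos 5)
Your proposal is correct and is essentially the paper's own argument: the paper derives this corollary in a single line from Remark~\ref{obs:partially2nested_hasnogems} and Lemma~\ref{lema:LR-orderable_caract_bymatrices}, and your two directions simply make explicit the ``straightforward'' content it elides (the converse reading of the Remark, using admissibility via $D_0$ and $D_1$ to settle the degenerate cases). Your additional observation---that the corollary may safely omit $S_0(k)$ because its rows are unlabeled, hence an occurrence in $A$ passes unchanged into $A^*$ and, after deleting the all-ones row, produces the Tucker matrix $M_I(k)$ inside $A^*_{\tagg}$---correctly supplies the one detail the paper leaves unaddressed.
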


\subsection{A characterization of $2$-nested matrices} \label{subsec:teo_2nestedmatrices}

We begin this section by stating and proving a lemma that characterises when a partial $2$-coloring can be extended to a total proper $2$-coloring, for every partially $2$-colored connected graph $G$.
Then, we give the definition and some properties of the auxiliary matrix $A+$. These properties will be helpful throughout the proof of Theorem~\ref{teo:2-nested_charact} at the end of the section.

\begin{lemma} \label{lema:2-color-extension}
	Let $G$ be a connected graph with a partial proper $2$-coloring of the vertices. Then, the partial $2$-coloring can be extended to a total proper $2$-coloring of the vertices of $G$ if and only if all of the following conditions hold:
	\begin{itemize}
		\item There are no even induced paths such that the only colored vertices of the path are its endpoints, and they are colored with distinct colors.
		\item There are no odd induced paths such that the only colored vertices of the path are its endpoints, and they are colored with the same color.
		\item There are no induced uncolored odd cycles.
		\item There are no induced odd cycles with exactly one colored vertex.
		\item There are no induced odd cycles with exactly two consecutive colored vertices.
	\end{itemize}	  
\end{lemma}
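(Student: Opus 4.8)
The plan is to prove both implications, treating necessity as a short warm-up and concentrating on sufficiency. For necessity I would observe that a total proper $2$-coloring witnesses that $G$ is bipartite, so $G$ has no odd cycle at all; in particular all three cycle conditions hold vacuously. The two path conditions then follow from the elementary fact that colors alternate along any path of a properly $2$-colored graph: the endpoints of an even induced path receive equal colors and those of an odd induced path receive distinct colors, so neither forbidden pattern can survive in the extended coloring.

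For sufficiency I would first show that the five conditions force $G$ to be bipartite. Assuming $G$ has an odd cycle, it has a shortest odd cycle $C$, which is chordless and hence induced. I would argue by the number $m$ of colored vertices on $C$. If $m=0$ or $m=1$, then $C$ directly contradicts the uncolored-odd-cycle condition or the one-colored-vertex condition; if $m=2$ with the two colored vertices adjacent, then $C$ contradicts the two-consecutive-colored-vertices condition. In the remaining cases ($m\ge 2$, not a single adjacent pair) I would consider the arcs of $C$ cut out by consecutive colored vertices: each arc is a subpath of the induced cycle $C$, hence an induced path whose only colored vertices are its endpoints. By the two path conditions an even arc must have equal-colored endpoints and an odd arc must have distinct-colored endpoints, so the parity of each arc records whether its endpoints differ in color. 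The total number of color changes encountered on the way around $C$ must be even, since one returns to the starting color, yet this count is congruent modulo $2$ to $\sum_i |A_i| = |C|$, which is odd ---a contradiction. Hence $G$ is bipartite.

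Once $G$ is known to be connected and bipartite, its bipartition $(X,Y)$ is unique, so a total proper $2$-coloring is one of the two colorings constant on $X$ and on $Y$. The remaining task is to show the given partial coloring is consistent with one such coloring, i.e. that two colored vertices receive equal colors exactly when they lie in the same part. I would prove this for an arbitrary pair $u,v$ of colored vertices by taking a shortest $u$-$v$ path $P$ (which is induced) and letting $x_0=u,x_1,\dots,x_s=v$ be the colored vertices along $P$ in order. Each segment from $x_i$ to $x_{i+1}$ is an induced path with only its endpoints colored, so the two path conditions force the color relationship of $x_i,x_{i+1}$ to match the parity of that segment; since $G$ is bipartite, this parity also records whether $x_i$ and $x_{i+1}$ lie in the same part. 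Telescoping the ``differ'' relation along the chain (an XOR computation) then yields that $c(u)=c(v)$ if and only if $u$ and $v$ are in the same part. As $u,v$ were arbitrary, the partial coloring respects the bipartition, and extending it to all of $X$ and $Y$ by the forced color produces the desired total proper $2$-coloring (when no vertex is colored one simply $2$-colors the bipartite graph).

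The main obstacle I anticipate is the bipartiteness step, and within it the bookkeeping of the parity argument around an induced odd cycle: one must verify that consecutive colored vertices really cut $C$ into induced paths with no interior colored vertices, and that the two path conditions translate each arc's parity into the correct equal/distinct relationship, so that the global count of color changes is forced to be simultaneously even and odd. The chaining argument for consistency is conceptually the same telescoping computation and should be routine once the single-arc version is in place; the only care needed there is to note that shortest paths and their subpaths are induced, so that the path conditions genuinely apply.
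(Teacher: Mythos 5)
Your proposal is correct, but it takes a genuinely different route from the paper. The paper proves sufficiency by induction on the number of uncolored vertices: it picks an uncolored vertex $v$ with a colored neighbour, argues that all colored neighbours of $v$ share one color (else condition violations appear), assigns $v$ the opposite color, and then verifies---through a fairly delicate case analysis involving the neighbour $w$ and its attachments to a hypothetical bad path---that the five conditions still hold for the enlarged partial coloring, so that induction applies. You instead argue globally: first that the five conditions force $G$ to be bipartite (shortest odd cycle, hence induced; the cases of $0$, $1$, or two adjacent colored vertices are killed directly by the three cycle conditions, and otherwise the arcs between consecutive colored vertices are induced subpaths to which the two path conditions apply, making the number of color changes around the cycle both even and odd), and second that the partial coloring is consistent with the unique bipartition of the connected bipartite graph $G$, via the same arc-parity bookkeeping along shortest (hence induced) paths. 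Your case split for $m=2$ with adjacent colored vertices is not cosmetic: there the long arc has length $|C|-1$ and is \emph{not} induced, so the fifth condition is exactly what rescues the argument---your plan handles this correctly. What each approach buys: yours isolates all the combinatorial content in one clean parity computation and avoids re-verifying invariants after every coloring step, which is where most of the paper's case analysis (and its somewhat hand-waved symmetric cases) lives; the paper's induction is more constructive in flavor, producing the extension vertex by vertex, which matches how the lemma is later applied to the auxiliary graph $H(A+)$.
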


\begin{proof}
The `only if' part is trivial.

On the other hand, for the `if' part, suppose all of the conditions hold. Notice that, since $G$ has a given proper partial $2$-coloring, then there are no adjacent vertices pre-colored with the same color. We denote $(G,f)$ to refer to $G$ and the given partial proper $2$-coloring $f$ of its vertices.

Let $H$ be the subgraph of $G$ induced by its uncolored vertices and let $f$ be the given proper partial $2$-coloring of $V(G)$. The proof is by induction on the number of vertices of $H$.

For the base case, this is to say when $|H| = 1$, let $v$ be in $H$. If $v$ cannot be colored, then there are two distinct vertices $x_1$ and $x_2$ such that $x_1$ and $x_2$ have different colors. Thus, the set $\{ x_1, v, x_2 \}$ either induces an even path in $G$ of length 2 with the endpoints colored with distinct colors, or an induced $C_3$ with exactly one uncolored vertex, which results in a contradiction.

For the inductive step, suppose that we can extend the partial $2$-coloring of $G$ to a proper $2$-coloring if $|V(H)| \leq k$. 
Suppose that $|V(H)| = k+1$. If $H$ is not connected, then the result follows by applying the induction hypothesis to each of its connected components. Moreover, if $H$ is disconnected from $G\left[G-H\right]$, then $H$ is an uncolored graph with no odd cycles, thus we can extend $f$ using any proper $2$-coloring for $H$. Henceforth, we assume $H$ is connected and there is a vertex in $H$ having at least one neighbour in $V(G-H)$.

Let $v$ in $H$ be any vertex such that $N(v) \cap V(G-H)\neq \emptyset$. Every vertex $w$ in $N(v)\cap V(G-H)$ must be colored with the same color, for if not we find either an induced cycle of length $3$ with exactly one uncolored vertex or an even induced path with its endpoints colored with distinct colors. Suppose that such a color is red. Thus, we extend the given partial proper $2$-coloring $f$ to $f'$ assigning the color blue to $v$. %and for each pre-colored vertex $w$ in $V(G)$, $f'(w)=f(w)$.
We will see that the graph $G$ with the partial proper $2$-coloring $f'$ fulfills all the assertions.

It is straightforward that there are no uncolored odd cycles in $(G,f')$, for there were no odd uncolored cycles in $(G,f')$. Furthermore, using the same argument, we see that there are no induced odd cycles with exactly one colored vertex nor induced odd cycles with exactly two consecutive colored vertices, for this would imply that there is either an odd uncolored cycle or an odd cycle with exactly one colored vertex in $(G,f)$. %, such that $v$ is a vertex of such cycle.

Since all the assertions hold for $(G,f)$, if there was an even induced path $P=<v_1, \ldots, v_j>$ such that the only colored vertices are its endpoints and they are colored with distinct colors, then the only possibility is that one of such endpoints is $v$. Let $v=v_1$. Notice that $v_2$ lies in $H\setminus \{v\}$, thus it is uncolored in both $(G,f)$ and $(G,f')$.
Let $w \in N(v) \cap V(G-H)$. Notice that $w v_j \not\in E(G)$, for they are both colored with red.
If $w$ is nonadjacent to every vertex in $P$ with the exception of $v=v_1$, then $<P,w>$ is an odd induced path in $(G,f)$ such that the only colored vertices are its endpoints and are both colored with the same color.
Suppose to the contrary that $w$ is adjacent to at least one vertex in $P$.
If $w$ is adjacent to a vertex in $P$ with an even index, then let us consider $v_i$ to be the neighbour of $w$ in $P$ with the smaller even index. In this case, we find an induced odd cycle  $<w, v=v_1, \ldots, v_i, w>$ with exactly one colored vertex in $(G,f')$, and this is equivalent to having an induced odd cycle with exactly one colored vertex in $(G,f)$, which results in a contradiction.
If instead $w$ is adjacent to a vertex in $P$ having an odd index, then we consider $v_i$ to be the neighbour of $w$ in $P$ with the largest odd index. In this case, we find $<w,v_i, \ldots, v_j>$ which is an induced path of length $j-i+1$ --which is odd-- in $(G,f)$ such that the only colored vertices are $w$ and $v_j$ and they are colored with the same color, and this contradicts one of the assertions.

The same argument holds if there is an odd induced path in $(G,f')$.
%If $w$ is adjacent to $v_2$, then we find $C_3$ with exactly two colored vertices in $(G,f')$, thus we find $C_3$ with exactly one colored vertex in $(G,f)$ and this results in a contradiction. The same holds if $w$ is adjacent to any vertex in $P$ with an even index, thus finding an induced odd cycle with exactly one colored vertex in $(G,f)$ considering the smaller even index $i$ in $P$ such that $w v_i \in E(G)$.
%If instead $w$ is adjacent to 
\end{proof}

	 Let $A$ be an admissible matrix, let $S_1$ and $S_2$ be a partition of the LR-rows of $A$ such that every pair of rows in $S_i$ is nested, for each $i=1,2$. 
	 Since $A$ contains no $D_0$ as a subconfiguration, there is a row $m_L$ such that $m_L$ is labeled with L and contains every L-block of those rows in $A$ that are labeled with L. Analogously, we find a row $m_R$ such that every R-block of a row in $A$ labeled with R is contained in $m_R$. Moreover, there are two rows $m_1$ in $S_1$ and $m_2$ in $S_2$ such that every row in $S_i$ is contained in $m_i$, for each $i=1,2$.
	 This property allows us to define the following auxiliary matrix, which will be helpful throughout the proof of Theorem~\ref{teo:2-nested_charact}.
	 
\begin{definition} \label{def:A+}
Let $A$ be an admissible matrix and let $\Pi$ be a suitable %\todo{hay que postergar esta definici\'on para despu\'es de definir suitable. $A^+$ no se usa hasta la pr\'oxima secci\'on}\
%todo{Ayudar\'ia un ejemplo de una matriz $A$ y su $A^+$} 
LR-ordering of $A$. The enriched matrix $A+$ is the result of applying the following rules to $A$:
\begin{itemize}
	\item Every empty row is deleted.

	\item Each LR-row $f$ with exactly one block is replaced by a row labeled with either L or R, depending on whether it has an L-block or an R-block.

	\item Each LR-row $f$ with exactly two blocks, is replaced by two uncolored rows, one having a $1$ in precisely the columns of its L-block and labeled with L, and another having a $1$ in precisely the columns of its R-block and labeled with R. We add a column $c_f$ with $1$ in precisely these two rows and $0$ in the remaining ones. 

	\item If there is at least one row labeled with L or R in $A$, then each LR-row $f$ whose entries are all $1$'s is replaced by two uncolored rows, one having a $1$ in precisely the columns of the maximum L-block and labeled with L, and another having a $1$ in precisely the complement of the maximum L-block and labeled with R. We add a column $c_f$ with $1$ in precisely these two rows and $0$ in the remaining ones.
\end{itemize}
Notice that every non-LR-row remains the same. See Figure~\ref{fig:example_B+} for an example.

\end{definition} 

\begin{figure}
\begin{align*}
	B = \scriptsize{\bordermatrix{ ~ &  ~ \cr
	\textbf{LR} & 1 0 0 0 0  \cr
	\textbf{LR} & 1 1 0 0 1 \cr
 	 \textbf{LR} & 1 1 1 1 1  \cr
					 & 0 1  1  0  0 \cr
	\textbf L & 1 1 1 0 0 \cr
	\textbf{LR} & 0 0 0 0 0 \cr
	\textbf R & 0 0 0 1 1  }\,
	\begin{matrix}
	\\ \\ \\ \\ \textcolor{dark-red}{\bullet} \\ \textcolor{blue}{\bullet} \\ \textcolor{blue}{\bullet} 
	\end{matrix} }
	&&
B+ = \scriptsize{ \bordermatrix{ & \cr
	\textbf{L} & 1 0 0 0 0 0 0 \cr
	\textbf{L} & 1 1 0 0 0 1 0  \cr
	\textbf{R} & 0 0 0 0 1 1 0 \cr
	\textbf{L} & 1 1 1 0 0 0 1 \cr
	\textbf{R} & 0 0 0 1 1  0  1\cr
					 & 0 1  1  0  0  0  0 \cr
	\textbf L & 1 1 1 0 0 0 0 \cr
	\textbf R & 0 0 0 1 1 0 0 }\,
	\begin{matrix}
	\\ \\ \\ \\ \\ \\ \textcolor{dark-red}{\bullet} \\ \textcolor{blue}{\bullet} \\ 
	\end{matrix} }
\end{align*}
\caption{Example of an enriched admissible matrix $B$ and $B+$. The last two columns of $B+$ are $c_{r_2}$ and $c_{r_3}$.} \label{fig:example_B+}
\end{figure}

\begin{remark}\label{obs:props_de_A+_suitable}
Let $A$ be a partially $2$-nested matrix. Since $A$ is admissible, LR-orderable and contains no $M_0$, $M_{II}(4)$, $M_V$ or $S_0(k)$ for every even $k \geq 4$ as a subconfiguration, then by Lemma~\ref{lema:hay_suitable_ordering} we know that there exists a suitable LR-ordering $\Pi$. Hence, whenever we define the auxiliary matrix $A+$ for such a matrix $A$, we will always consider a suitable LR-ordering $\Pi$ to do so.

Let us consider $A+$ as defined in Definition~\ref{def:A+} according to a suitable LR-ordering $\Pi$. Suppose there is at least one LR-row in $A$. Recall that, since $A$ is admissible, the LR-rows may be split into two disjoint subsets $S_1$ and $S_2$ such that the LR-rows in each subset are totally ordered by inclusion. This implies that there is an inclusion-wise maximal LR-row $m_i$ for each $S_i$, $i=1,2$. If we assume that $m_1$ and $m_2$ overlap, then either the L-block of $m_1$ is contained in the L-block of $m_2$ and the R-block of $m_1$ contains the R-block of $m_2$, or vice versa. Since $\Pi$ is suitable and $A$ contains neither $D_1$ nor $D_4$ as a subconfiguration and has no uncolored rows labeled with either L or R, if there is at least one LR-row in $A$, then the following holds:
	\begin{itemize}
		\item There is an inclusion-wise maximal L-block $b_L$ in $A+$ such that every R-block in $A+$ is disjoint with $b_L$. 
		\item There is an inclusion-wise maximal R-block $b_R$ in $A+$ such that every L-block in $A+$ is disjoint with $b_L$. 
	\end{itemize}
	
Therefore, when defining $A+$ we replace each LR-row having two blocks by two distinct rows, one labeled with L and the other labeled with R, such that the new row labeled with L does not intersect with any row labeled with R and the new row labeled with R does not intersect with any row labeled with L. %Moreover, if there is at least one LR-row, then there are no uncolored non-disjoint rows labeled with distinct letters.
\end{remark}

%aca saque esto porque no lo usamos nunca!
%We denote $A+ \setminus C_f$ to the enriched submatrix induced by considering every non-$c_f$ column of $A+$. 
Notice that $A$ differs from $A+$ only in its LR-rows, which are either deleted or replaced in $A+$ by labeled uncolored rows. %The following is a straightforward consequence of this fact.

%\begin{lemma} \label{lema:A+_tb_es_adm} 
	%If $A$ is admissible and LR-orderable, then $A+ \setminus C_f$ is admissible and LR-orderable.
%\end{lemma} 

	%Let us consider an enriched matrix $A$. From now on, for each row $f$ in $A$ that is colored, we consider its blocks colored with the same color as $f$ in $A$.\todo{?}

\begin{definition} \label{def:proper2coloring}
	A color assignment to some (eventually none) of the blocks of an enriched matrix $A$ using two colors is a \emph{proper $2$-coloring} if $A$ is admissible, the L-block and R-block of each LR-row of $A$ are colored with distinct colors, and $A$ contains no monochromatic gems, weak monochromatic gems or badly-colored doubly-weak gems as subconfigurations. If there is no (resp.\ at least one) uncolored row in $A$, then we say that the proper $2$-coloring is \emph{total} (resp.\ \emph{partial}).
	
	Given an assignment of two colors to some (eventually none) of the blocks an enriched matrix $A$ using two colors, we say it is a \emph{proper $2$-coloring of $A+$} if it is a proper $2$-coloring of $A$. 
	
	%$A+ \setminus C_f$ is admissible and contains no monochromatic gems, and $A+$ contains no weak monochromatic gems with at most one $c_f$ column as subconfigurations.	
\end{definition}  

\begin{remark} \label{obs:admisible_espartialproper2color}
Let $A$ be an enriched matrix. If $A$ is admissible, then the given coloring of the blocks is a proper $2$-coloring. This follows from the fact that every colored row is either labeled with L or R, or is an empty LR-row, thus there are no monochromatic gems, monochromatic weak gems or badly-colored weak gems in $A$ for they would induce $D_1$.
\end{remark}

The following is a straightforward consequence of Remark~\ref{obs:partially2nested_hasnogems}.

\begin{lemma}\label{lema:part2nested_is_2_colored}
	Let $A$ be an enriched matrix. If $A$ is partially $2$-nested, then the given coloring of $A$ is a proper partial $2$-coloring. 
	Moreover, if $A$ is partially $2$-nested and admits a total $2$-coloring, then $A$ with such $2$-coloring is partially $2$-nested.
\end{lemma}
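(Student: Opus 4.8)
The plan is to verify, in each direction, the defining clauses of the relevant notion directly, leaning on Remark~\ref{obs:partially2nested_hasnogems} for the first statement and on its (equally easy) converse for the second. For the first assertion, recall that under the coloring carried by an enriched matrix the only colored rows are the colored L-rows, the colored R-rows and the empty LR-rows, so that no \emph{nonempty} LR-row has a colored L-block or R-block. I would then check the three requirements of Definition~\ref{def:proper2coloring} in turn: admissibility is exactly the first clause of Definition~\ref{def:partially_2-nested}; the requirement that the L-block and the R-block of each LR-row receive distinct colors holds vacuously, since no nonempty LR-row carries colored blocks under the given coloring; and the absence of monochromatic gems, monochromatic weak gems and badly-colored doubly-weak gems is precisely the content of Remark~\ref{obs:partially2nested_hasnogems}. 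Since uncolored rows remain (for instance the unlabeled rows and the nonempty LR-rows), the coloring is partial, hence a proper partial $2$-coloring.

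For the second assertion I read ``$A$ admits a total $2$-coloring'' as the existence of a total proper $2$-coloring $\phi$ in the sense of Definition~\ref{def:proper2coloring}, and I must show that $A$ equipped with $\phi$ satisfies the four clauses of Definition~\ref{def:partially_2-nested}. The first clause refers only to the underlying enriched structure and the labels (admissibility, LR-orderability and the absence of $M_0$, $M_{II}(4)$, $M_V$ and $S_0(k)$), so it is inherited from the hypothesis that $A$ is partially $2$-nested, admissibility being in any case reasserted by $\phi$ being proper.

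The remaining clauses I would establish by contraposition, exhibiting in each case one of the gems forbidden by $\phi$. If two non-LR-rows of the same $\phi$-color overlap, their (single) blocks form a $0$-gem, i.e.\ a monochromatic gem, which settles the second clause. If a non-LR-row has the color of a colored L-block $B=[1,a]$ of an LR-row $f$ but is neither disjoint from $B$ nor contained in it, then, according to whether that row begins strictly after the first column or exactly at it, one obtains either a monochromatic gem (a genuine $0$-gem) or, through the $1$-gem formed by $f$ and the non-LR-row in the columns $a$ and $a+1$, a monochromatic weak gem; this settles the third clause, and the R-block case is symmetric. Finally, if two distinct LR-rows have equally colored L-blocks but are not nested, then, both L-blocks containing the first column, the two rows overlap and realize a $2$-gem whose shared $1$-column lies in both L-blocks, that is, a badly-colored doubly-weak gem; this gives the fourth clause. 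As $\phi$ is proper none of these configurations occurs, so all four clauses hold and $A$ with $\phi$ is partially $2$-nested.

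The step demanding the most care is this contrapositive translation from the block-level ``no gem'' hypotheses of Definition~\ref{def:proper2coloring} to the row-level containment conclusions of Definition~\ref{def:partially_2-nested}: the case split in the third clause (the overlapping row starting at the first column versus later) is exactly what distinguishes a $0$-gem from a weak $1$-gem, and in the fourth clause it is admissibility, through the two-nested-families decomposition of Remark~\ref{rem:A_LR_2nested}, that guarantees the configuration has the precise shape of a $2$-gem. A minor preliminary point is to fix the intended reading of ``total $2$-coloring'' as a total proper $2$-coloring.
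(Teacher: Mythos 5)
Your proof is correct and takes essentially the same approach as the paper: the paper states this lemma without an explicit proof, presenting it as ``a straightforward consequence of Remark~\ref{obs:partially2nested_hasnogems}'', and your argument is exactly the fleshed-out version of that justification --- the Remark itself for the first assertion, and the converse gem-pattern translations (which the paper leaves implicit as straightforward) for the second. Your decision to read ``total $2$-coloring'' as ``total proper $2$-coloring'' is the reading the paper uses elsewhere (e.g.\ in Lemma~\ref{lema:2-nested_if}), so no discrepancy arises there.
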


\begin{lemma}  \label{lema:2-nested_if}
	Let $A$ be an enriched matrix. Then, $A$ is $2$-nested if $A$ is partially $2$-nested and the given partial block bi-coloring of $A$ can be extended to a total proper $2$-coloring of $A$. % such that $A$ with the total $2$-coloring of its blocks is partially $2$-nested. 
\end{lemma}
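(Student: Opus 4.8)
The plan is to exhibit a single pair $(\Pi,c)$ that witnesses $2$-nestedness and then to check the nine clauses of Definition~\ref{def:2-nested} one at a time. Since $A$ is partially $2$-nested it is admissible, LR-orderable and contains none of $M_0$, $M_{II}(4)$, $M_V$ or $S_0(k)$ for even $k\ge 4$, so Lemma~\ref{lema:hay_suitable_ordering} supplies a \emph{suitable} LR-ordering $\Pi$. I would fix $\Pi$ once and for all and read every block off $\Pi$; by hypothesis the partial block bi-coloring induced by the colors of $A$ extends to a total proper $2$-coloring $c$ (taken with respect to $\Pi$). It then suffices to argue that $(\Pi,c)$ is a total block bi-coloring, which is exactly what it means for $A$ to be $2$-nested.

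The clauses fall into three groups. The clauses concerning an LR-row's own data are immediate: clause~\ref{item:2nested1} (the L-block and R-block of an LR-row receive different colors) is built into the definition of a proper $2$-coloring, and clause~\ref{item:2nested2} (a pre-colored row keeps its color) holds because $c$ \emph{extends} the given partial coloring. Clause~\ref{item:2nested4} (an LR-row's L-block is disjoint from every R-block, and symmetrically) is precisely the defining property of a suitable ordering, so it comes for free from the choice of $\Pi$.

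The second group consists of the monochromatic clauses~\ref{item:2nested3},~\ref{item:2nested6} and~\ref{item:2nested7}, where I would use that a proper $2$-coloring contains no monochromatic gem and no monochromatic weak gem. Two same-colored U-blocks that were neither disjoint nor nested would form a monochromatic $0$-gem, giving clause~\ref{item:2nested6}; an L-block of an LR-row properly contained in the L-block of an L-row is a $1$-gem carrying exactly the labels of a weak gem, so equal colors are impossible, giving clause~\ref{item:2nested3}; and clause~\ref{item:2nested7} combines both, the absence of a monochromatic gem ruling out overlap and the absence of a monochromatic weak gem ruling out the case where the U-block strictly contains the L-block. The role of suitability here is to guarantee that a U-block meets at most one of the two blocks of any LR-row, so that the relevant $1$-gem genuinely occurs as a subconfiguration.

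The last group is the structural clauses~\ref{item:2nested5},~\ref{item:2nested8} and~\ref{item:2nested9}, and this is where I expect the real work. For clause~\ref{item:2nested5}, by clause~\ref{item:2nested4} a non-disjoint L-block/R-block pair must come from two non-LR-rows, and an L-row and an R-row that meet yet share a color form a $D_1$ (or, if they properly overlap, a monochromatic gem), forbidden by admissibility, so they get different colors. For clause~\ref{item:2nested8} I would argue the contrapositive: an LR-row with no L-block has a $0$ in the first column, where every L-row has a $1$, so two L-rows of different colors together with that LR-row exhibit $D_4$ (or $D_0$), again impossible. For clause~\ref{item:2nested9} I would invoke Remark~\ref{rem:A_LR_2nested}: two overlapping LR-rows lie in different nested chains, so up to symmetry their L-blocks and R-blocks interleave; the absence of a badly-colored doubly-weak gem forces their two L-blocks to receive distinct colors, and since each LR-row already has its L- and R-blocks oppositely colored and only two colors are available, the L-block of one must match the R-block of the other. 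The points I expect to be the crux are (i) verifying that the total coloring $c$, not merely the partial one, still avoids the colored members of $\mathcal{D}$ (so that the admissibility-based arguments for clauses~\ref{item:2nested5} and~\ref{item:2nested8} are legitimate) together with the compatibility of $c$ with the chosen suitable $\Pi$, and (ii) making the two-color bookkeeping in clause~\ref{item:2nested9} precise once the two chains of LR-rows have been fixed.
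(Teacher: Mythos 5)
Your proposal follows the paper's own proof essentially step for step: a suitable LR-ordering is obtained from Lemma~\ref{lema:hay_suitable_ordering}, the extended total proper $2$-coloring is taken as the block bi-coloring, and the nine clauses of Definition~\ref{def:2-nested} are verified exactly as the paper does -- clause~\ref{item:2nested4} from suitability, clause~\ref{item:2nested2} from the fact that the coloring extends the pre-coloring, clauses~\ref{item:2nested1},~\ref{item:2nested3},~\ref{item:2nested6},~\ref{item:2nested7} and~\ref{item:2nested9} from the absence of monochromatic gems, monochromatic weak gems and badly-colored doubly-weak gems, and clauses~\ref{item:2nested5} and~\ref{item:2nested8} from admissibility ($D_1$, $D_4$/$D_0$). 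If anything, your write-up is more explicit than the paper's at the two points you flag as cruxes, where the paper simply asserts that clauses~\ref{item:2nested8} and~\ref{item:2nested9} ``follow'' from admissibility, suitability and the gem conditions.
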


\begin{proof}
Let $A$ be an enriched matrix that is partially $2$-nested and for which the given coloring of the blocks can be extended to a total proper $2$-coloring of $A$. In particular, this induces a total block bi-coloring for $A$. Indeed, we want to see that a proper $2$-coloring induces a total block bi-coloring for $A$. 
Notice that the only pre-colored rows may be those labeled with L or R and those empty LR-rows.
Let us see that each of the assertions of Definition~\ref{def:2-nested} hold.

\begin{enumerate}
	\item Since $A$ is an enriched matrix and the only rows that are not pre-colored are the nonempty LR-rows and those that correspond to U-blocks, then there is no ambiguity when considering the coloring of the blocks of a pre-colored row (Assertion~\ref{item:2nested2} of Definition~\ref{def:2-nested}). %\todo{falta usar $\backslash$ref esta y las siguientes assertions}
	
	\item If $A$ is partially $2$-nested, then in particular is admissible, LR-orderable and contains no $M_0$, $M_{II}(4)$ or $M_V$ as a subconfiguration. Thus, by Lemma~\ref{lema:hay_suitable_ordering}, there is a suitable LR-ordering $\Pi$ for the columns of $A$. We consider $A$ ordered according to $\Pi$ from now on. Since $\Pi$ is suitable, then every L-block of an LR-row and an R-block of a non-LR-row are disjoint, and the same holds for every R-block of an LR-row and an L-block of a non-LR-row (assertion~\ref{item:2nested4} of Definition~\ref{def:2-nested}). 

	\item Since $A$ is admissible, it contains as a subconfigurations no matrix in $\mathcal{D}$. Moreover, since $A$ is partially $2$-nested, by Corollary~\ref{cor:partially_2-nested_caract} there are no monochromatic gems or weak gems and no badly-colored doubly-weak gems induced by pre-colored rows. It follows from this and the fact that the LR-ordering is suitable, that assertion~\ref{item:2nested8} of Definition~\ref{def:2-nested} holds.

	\item The pre-coloring of the blocks of $A$ can be extended to a total proper $2$-coloring of $A$. This induces a total block bi-coloring for $A$, for which we can deduce the following assertions:
	\begin{itemize}
	\item Since there is a total proper $2$-coloring of $A$, in particular the L-block and R-block of each LR-row are colored with distinct colors. (Assertion~\ref{item:2nested1} of Definition~\ref{def:2-nested}).%\todo{falta usar $\backslash$ref esta y las siguientes assertions}
	
	\item Each L-block and R-block corresponding to distinct LR-rows with nonempty intersection are also colored with distinct colors since there are no badly-colored doubly-weak gems in $A$ (Assertion~\ref{item:2nested9} of Definition~\ref{def:2-nested}).
	
	\item Since $A$ is admissible, every L-block and R-block corresponding to distinct non-LR-rows are colored with different colors since there is no $D_1$ in $A$ (Assertion~\ref{item:2nested5} of Definition~\ref{def:2-nested}).
	
	\item Since there are no monochromatic weak gems in $A$, an L-block of an LR-row and an L-block of a non-LR-row that contains the L-block must be colored with distinct colors. Furthermore, if any L-block and a U-block are not disjoint and are colored with the same color, then the U-block is contained in the L-block. (Assertions~\ref{item:2nested3} and ~\ref{item:2nested7} of Definition~\ref{def:2-nested}).
	
	\item There is no monochromatic gem in $A$, then each two U-blocks colored with the same color are either disjoint or nested. (Assertion~\ref{item:2nested6} of Definition~\ref{def:2-nested}).
	\end{itemize}
\end{enumerate}
\end{proof}

\begin{lemma} \label{lema:if_suitable_noM}
Let $A$ be an enriched matrix. If $A$ admits a suitable LR-ordering, then $A$ contains no $M_0$, $M_{II}(4)$, $M_V$ or $S_0(k)$ as a subconfiguration for any even $k \geq 4$. %and A contains no D, P or S (eso tiene que valer ya que la demo de LR-orderable usa como mucho esas matrices para reducirse y dar la caracterizacion)
\end{lemma}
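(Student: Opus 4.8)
The plan is to argue the contrapositive in spirit: assuming $A$ admits a suitable LR-ordering $\Pi$, I would show that none of $M_0$, $M_{II}(4)$, $M_V$, or $S_0(k)$ can occur as a subconfiguration of $A$. The crucial observation is that every row of each of these four matrices is \emph{unlabeled} (and hence uncolored). By the definition of subconfiguration, which preserves labels and colors, any copy of one of them sitting inside $A$ must use only unlabeled, and in particular non-LR, rows of $A$. Since a suitable LR-ordering is in particular an LR-ordering, the $1$'s of every non-LR-row of $A$ appear consecutively under $\Pi$. Restricting $\Pi$ to the columns occupied by the putative copy then yields a column ordering in which all of its rows have consecutive $1$'s, because restricting a consecutive-ones ordering to a subset of columns preserves consecutiveness. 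Thus each of the four matrices would be forced to have the consecutive-ones property, and the whole lemma reduces to checking that none of them does.

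The second step is to rule this out matrix by matrix. For $M_{II}(4)$ and $M_V$ this is immediate: as written in Figure~\ref{fig:forb_M_chiquitas} they are exactly the Tucker matrices of the same names in Figure~\ref{fig:tucker_matrices}, so Theorem~\ref{teo:tucker} says they fail the C$1$P. For $S_0(k)$ I would delete its all-ones first row; what remains is precisely the Tucker matrix $M_I(k)$. Since deleting rows preserves being a submatrix and $M_I(k)$ fails the C$1$P for every $k\geq 3$, so does $S_0(k)$ for every even $k\geq 4$. For $M_0$ I would exhibit $M_I(3)$ as the submatrix induced by its first, second, and fourth columns: those three columns contain exactly two $1$'s each and realize the $3$-cycle pattern $\{c_1,c_4\},\{c_1,c_2\},\{c_2,c_4\}$, which is $M_I(3)$ up to permutation and hence fails the C$1$P by Theorem~\ref{teo:tucker}. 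Each identification delivers the contradiction needed to complete the argument.

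I do not expect a genuine obstacle here; the argument rests on two elementary facts, namely that restriction of a consecutive-ones ordering to a column subset stays consecutive and that an LR-ordering forces consecutive $1$'s on non-LR-rows. The only point that requires care is the bookkeeping: one must confirm that the (absence of) labels on these four matrices really forces the matching rows of $A$ to be unlabeled U-rows, so that the consecutive-ones requirement of the LR-ordering genuinely applies to them. The sole calculational content is locating the right Tucker submatrix inside $M_0$ and recognizing $S_0(k)$ as $M_I(k)$ with an all-ones row adjoined, both of which are direct inspections. It is worth noting that the proof in fact establishes something slightly stronger than the statement, since these matrices already obstruct an ordinary LR-ordering; the suitability of $\Pi$ is used only through the trivial fact that a suitable LR-ordering is an LR-ordering.
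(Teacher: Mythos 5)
Your argument has a genuine gap, and it sits exactly where you flag your ``stronger'' conclusion. Your key step is that, because $M_0$, $M_{II}(4)$, $M_V$ and $S_0(k)$ are written with unlabeled rows, any copy of them inside $A$ must consist of unlabeled rows of $A$, to which the consecutive-ones requirement of an LR-ordering applies. That is a strict reading of Definition~\ref{def:subconfiguration}, but it is not the reading the paper uses for these four matrices: both in the paper's own proof of this lemma and in the places where the lemma is needed (notably the proof of Lemma~\ref{lema:hay_suitable_ordering}, where $M_0$ is exhibited using an LR-row among its rows, and the proof of Theorem~\ref{teo:case2_2nested}), these matrices are treated as patterns in the \emph{underlying} matrix whose rows may carry arbitrary labels, including LR. Under that interpretation your C$1$P argument collapses: an LR-row need not have its $1$'s consecutive in an LR-ordering (only its \emph{complement} must), so a copy of, say, $M_V$ or $M_0$ realized with some LR-rows is not obstructed by Tucker's theorem at all. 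Handling precisely those labeled realizations is the entire content of the paper's proof, which is a case analysis over which rows of $M_V$, $M_{II}(4)$, $M_0$, $S_0(k)$ are LR-rows.

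Moreover, your closing claim that suitability is superfluous (i.e.\ that these matrices already obstruct an ordinary LR-ordering) is false under the paper's interpretation, which shows the gap is not merely one of bookkeeping. Take the enriched matrix whose underlying matrix is $M_0$ with all three rows labeled LR: every complemented row has a single $1$, so \emph{any} column ordering is an LR-ordering; yet a short check (done in the paper's proof) shows that in every ordering some L-block of an LR-row with two blocks meets some R-block, so no \emph{suitable} LR-ordering exists. So the lemma is true only because of suitability, and a proof that never invokes suitability beyond ``a suitable LR-ordering is an LR-ordering'' cannot be correct for the statement as the paper uses it. The part of the lemma you do prove --- the case where all rows of the copy are non-LR rows, dispatched by restricting the ordering and quoting Tucker's theorem --- is exactly the triviality the paper disposes of in its first two sentences; your identifications of $M_I(3)$ inside $M_0$ and of $M_I(k)$ inside $S_0(k)$ are correct, but they only cover that easy case.
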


\begin{proof}
The result follows trivially if $A$ contains no LR-rows because $A$ admits an LR-ordering. Thus, if we consider $A$ without its LR-rows, that submatrix has the C$1$P and hence it contains no Tucker matrix as a subconfiguration. Toward a contradiction, suppose that $A$ contains either $M_0$, $M_{II}(4)$, $M_V$ or $S_0(k)$ as a subconfiguration for some even $k \geq 4$.
Since it contains no $M_I(k)$ as a subconfiguration for every $k \geq 3$, in particular there is no $M_0$ or $S_0(k)$ where at most one of the rows is an LR-row. Moreover, it is easy to see that, if we reorder the columns of $M_0$, then there is no possible LR-ordering in which every L-block and every R-block are disjoint.
Similarly, consider $S_0(4)$, whose first row has a $1$ in every column. We may assume that the last row is an LR-row for any other reordering of the columns yields an analogous situation with one of the rows. However, whether the first row is unlabeled or not, the first and the last row prevent a suitable LR-ordering. The reasoning is analogous for any even $k > 4$.

Suppose that $A$ contains $M_V$ as a subconfiguration, and let $f_1$,$ f_2$, $f_3$ and $f_4$ be the rows of $M_V$ depicted as follows:
\[ M_V= \scriptsize{ \bordermatrix{ &  \cr
f_1 & 1 1 0 0 0 \cr
f_2 & 0 0 1 1 0 \cr
f_3 & 1 1 1 1 0 \cr
f_4 & 1 0 0 1 1  } }
\]
If the first row is an LR-row, then either $f_3$ or $f_4$ is an LR-row, for if not we find $M_I(3)$ in $A^*$ as a subconfiguration, which is not possible because there is an LR-ordering in $A$. The same holds if the second row is an LR-row. 
If $f_3$ is an LR-row, then $f_4$ is an LR-row, for if not $f_4$ must have only one block. Thus, if $f_4$ is an unlabeled row, then it intersects both blocks of $f_3$, and if $f_4$ is an R-row, then its R-block intersects the L-block of $f_3$. 
However, if we move the columns so that the L-block of $f_3$ does not intersect the R-block of $f_4$, then we either cannot split $f_1$ into two blocks such that one starts on the left and the other ends on the right, or we cannot maintain a single block in $f_2$. It follows analogously if we assume that $f_4$ is an LR-row, thus $f_1$ is not an LR-row. By symmetry, we assume that $f_2$ is also a non-LR-row, and thus the proof is analogous if only $f_3$ and $f_4$ may be LR-rows. 

Suppose $A$ contains $M_{II}(4)$ as a subconfiguration. Let us denote $f_1$, $f_2$, $f_3$ and $f_4$ to the rows of $M_{II}(4)$ depicted as follows:
\[ M_{II}(4) = \scriptsize{ \bordermatrix{ &  \cr
f_1 & 0 1 1 1 \cr
f_2 & 1 1 0 0 \cr
f_3 & 0 1 1 0 \cr
f_4 & 1 1 0 1 } }
\]

If $f_2$ is an LR-row, then necessarily $f_3$ or $f_4$ are LR-rows, for if not we find $M_I(3)$ in $A^*$.
If only $f_2$ and $f_3$ are LR-rows, then we find $M_{II}(4)$ as a subconfiguration in $A^*$.
If instead only $f_2$ and $f_4$ are LR-rows, then --as it is-- whether $f_1$ is an R-row or an unlabeled row, the block of $f_1$ intersects the L-block and the R-block of $f_4$ (and also the L-block of $f_2$). The only possibility is to move the second column all the way to the right and split $f_2$ into two blocks and give the R-block of $f_4$ length $2$. However in this case, it is not possible to move another column and obtain an ordering that keeps all the $1$'s consecutive for $f_3$ and $f_1$ not intersecting both blocks of $f_4$ simultaneously. Thus, $f_1$ is also an LR-row. However, for any ordering of the columns, either it is not possible to simultaneously split the string of $1$'s in $f_1$ and keep the L-block of $f_2$ starting on the left, or it is not possible to simultaneously maintain the string of $1$'s in $f_3$ consecutive and the L-block of $f_1$ disjoint with the R-block of $f_4$. 
It follows analogously if both $f_3$ and $f_4$ are LR-rows. 
Hence, $f_2$ is a non-LR-row, and by symmetry, we may assume that $f_3$ is also a non-LR-row. Suppose now that $f_1$ is an LR-row. If $f_4$ is not an LR-row, then there is no possible way to reorder the columns and having a consecutive string of $1$'s for the rows $f_2$, $f_3$ and $f_4$ simultaneously, unless we move the fourth column all the way to the left. However in that case, either $f_4$ is an L-row and its L-block intersects the R-block of $f_1$ of it is an unlabeled row that intersects both blocks of $f_1$. Moreover, the same holds if $f_4$ is an LR-row, with the difference that in this case the R-block of $f_4$ intersects the L-block of $f_1$ or the string of $1$'s in $f_2$ and $f_3$ is not consecutive.\end{proof}

\begin{lemma} \label{lema:2-nested_onlyif}
	Let $A$ be an enriched matrix with a block bi-coloring. % \todo{con un coloreo de los bloques, no?}. 
	If $A$ is $2$-nested, then $A$ is partially $2$-nested and the total block bi-coloring induces a proper total $2$-coloring of $A$.
\end{lemma}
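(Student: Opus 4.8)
The plan is to read off every requirement of both conclusions directly from the data furnished by $2$-nestedness. Fix an LR-ordering $\Pi$ of $A$ together with a total block bi-coloring $c$ satisfying assertions~\ref{item:2nested1}--\ref{item:2nested9} of Definition~\ref{def:2-nested}. Two ingredients are then immediate. First, $A$ is LR-orderable, since $\Pi$ witnesses it. Second, $A$ is admissible: because $A$ admits a total block bi-coloring and, by the corollary and the discussion preceding Definition~\ref{teo:caract_admissible}, every matrix in $\mathcal{D}$, $\mathcal{S}$ and $\mathcal{P}$ fails to admit one, the fact that admitting a total block bi-coloring is inherited by subconfigurations forces $A$ to be $\{\mathcal{D},\mathcal{S},\mathcal{P}\}$-free. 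Combined with LR-orderability, Lemma~\ref{lema:LR-orderable_caract_bymatrices} also gives that $A^{*}_{\tagg}$ contains no Tucker matrix and no member of $\mathcal{M}$.

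Next I would show that the witnessing ordering $\Pi$ is \emph{suitable} in the sense of Definition~\ref{def:suitable_ordering}. Assertion~\ref{item:2nested4} supplies the first two clauses verbatim. For the third clause, suppose some U-block $u$ met both the L-block $L$ and the R-block $R$ of an LR-row $f$; since $c$ is total, $u$ is colored, and by assertion~\ref{item:2nested1} the blocks $L$ and $R$ get distinct colors, so $u$ agrees in color with exactly one of them, say $L$. Then assertion~\ref{item:2nested7} forces $u\subseteq L$, which is incompatible with $u\cap R\neq\emptyset$ because $L$ and $R$ are separated by the consecutive block of $0$'s of $f$ and hence disjoint. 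Thus no such $u$ exists and $\Pi$ is suitable, so Lemma~\ref{lema:if_suitable_noM} yields that $A$ contains none of $M_0$, $M_{II}(4)$, $M_V$ or $S_0(k)$ for even $k\geq 4$.

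It then remains to rule out monochromatic gems, monochromatic weak gems and badly-colored doubly-weak gems. A monochromatic gem is a pair of overlapping equally colored U-blocks, excluded directly by assertion~\ref{item:2nested6}. A monochromatic weak gem has one block properly containing the other with equal color in the forbidden direction, excluded by assertion~\ref{item:2nested3} (when an LR-row's L-block sits inside an L-row's L-block) and by assertion~\ref{item:2nested7} (for an L- or R-block versus a U-block). For a badly-colored doubly-weak gem, note that if two LR-rows overlap then assertion~\ref{item:2nested9} makes the L-block of one share the color of the R-block of the other, and combining this with assertion~\ref{item:2nested1} shows the two coincident $1$-columns lie in blocks of different colors, so the gem cannot be badly colored. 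At this point Corollary~\ref{cor:partially_2-nested_caract}, together with the extra exclusion of $S_0(k)$ from the second paragraph, certifies that $A$ is partially $2$-nested; alternatively assertions~2--4 of Definition~\ref{def:partially_2-nested} can be checked one by one from $c$, using that two L-blocks (resp.\ R-blocks) always share the first (resp.\ last) column of $\Pi$ and are therefore comparable. Finally, by Definition~\ref{def:proper2coloring} the same coloring $c$ is a proper total $2$-coloring: admissibility was shown, assertion~\ref{item:2nested1} gives distinct colors on the L- and R-blocks of each LR-row, the three gem conditions are exactly the ones just verified, and $c$ colors every block, so the coloring is total.

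I expect the main difficulty to be bookkeeping rather than a single hard idea: one must carefully match each of the nine local geometric assertions of Definition~\ref{def:2-nested} to the corresponding structural or gem-exclusion property, keep the two color classes consistent across assertions~\ref{item:2nested1} and~\ref{item:2nested9}, and dispose of the degenerate rows (empty rows and all-ones LR-rows, handled through the conventions of Definitions~\ref{def:A*} and~\ref{def:A+}). The genuine linchpin is the suitability argument of the second paragraph, since it is what upgrades the purely local coloring information into the global Tucker-type exclusions $M_0$, $M_{II}(4)$, $M_V$ and $S_0(k)$ that the first assertion of partially $2$-nestedness demands.
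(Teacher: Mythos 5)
Your overall architecture is sound and, for the most part, tracks the paper's own proof: like the paper, you extract suitability of $\Pi$ from assertions~\ref{item:2nested4} and~\ref{item:2nested7} (your version, which also invokes assertion~\ref{item:2nested1} and totality of the coloring, is actually more explicit than the paper's one-line claim), you then invoke Lemma~\ref{lema:if_suitable_noM} to exclude $M_0$, $M_{II}(4)$, $M_V$ and $S_0(k)$, and you finish through the gem exclusions and Corollary~\ref{cor:partially_2-nested_caract}; you even correctly notice that $S_0(k)$ must be supplied separately, since that corollary does not list it. Where you genuinely diverge is admissibility: you obtain $\{\mathcal{D},\mathcal{S},\mathcal{P}\}$-freeness wholesale from the corollary in Section~\ref{subsec:admissibility} together with the paper's claim that admitting a total block bi-coloring is inherited by subconfigurations, whereas the paper re-derives $\mathcal{D}$-freeness by hand, giving explicit contradictions for $D_4$, $D_5$, $D_9$, $D_{10}$, $D_{12}$ and $D_{13}$ from assertions~\ref{item:2nested1},~\ref{item:2nested3},~\ref{item:2nested8} and~\ref{item:2nested9}. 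Your route is shorter and legitimate given the paper's prior statements, but it leans on the inheritance claim, which the paper asserts without proof; the paper's derivation is self-contained at the cost of length.

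There is, however, one concrete gap: your claim that a monochromatic gem is ``a pair of overlapping equally colored U-blocks'' is false. A $0$-gem can be induced by labeled rows as well; in particular, an L-block and an R-block of non-LR-rows can overlap, and if they carried the same color this would be a monochromatic gem about which assertion~\ref{item:2nested6} says nothing. The paper excludes this case with assertion~\ref{item:2nested5}, which you never invoke; similarly, the mixed case of a labeled row overlapping an unlabeled row needs assertion~\ref{item:2nested7} as a gem exclusion, not only as a weak-gem exclusion. The omission is repairable with tools already present in your argument---assertion~\ref{item:2nested4} kills overlaps involving blocks of LR-rows, and for non-LR-rows assertion~\ref{item:2nested5} directly (or assertion~\ref{item:2nested2} plus admissibility, i.e.\ $D_1$-freeness) forces distinct colors---but as written your case analysis for monochromatic gems does not cover everything that Definition~\ref{def:proper2coloring} and Corollary~\ref{cor:partially_2-nested_caract} require.
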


\begin{proof}

If $A$ is $2$-nested, then in particular there is an LR-ordering $\Pi$ for the columns. Moreover, by assertions~\ref{item:2nested4} and ~\ref{item:2nested7} of Definition~\ref{def:2-nested},
such an ordering is suitable.

Suppose first there is a monochromatic gem in $A$. Such a gem is not induced by two unlabeled rows since in that case assertion~\ref{item:2nested6} of Definition~\ref{def:2-nested} would not hold. Hence, such a gem is induced by at least one labeled row. Moreover, if one is a labeled row and the other is an unlabeled row, then assertion~\ref{item:2nested7} of Definition~\ref{def:2-nested} would not hold. Thus, both rows are labeled. By assertion~\ref{item:2nested5} of Definition~\ref{def:2-nested}, if the gem is induced by two non-disjoint L-block and R-block, then it is not monochromatic, disregarding on whether they correspond to LR-rows or non-LR-rows. Hence, exactly one of the rows is an LR-row. However, by assertion~\ref{item:2nested4} of Definition~\ref{def:2-nested}, an L-block of an LR-row and an R-block of a non-LR-row are disjoint, thus they cannot induce a gem. 

Suppose there is a monochromatic weak gem in $A$, thus at least one of its rows is a labeled row. It is not possible that exactly one of its rows is a labeled row and the other is an unlabeled row, since assertion~\ref{item:2nested7} of Definition~\ref{def:2-nested} holds. Moreover, these rows do not correspond to rows labeled with L and R, respectively, for assertions~\ref{item:2nested4} and~\ref{item:2nested5} of Definition~\ref{def:2-nested} hold. Furthermore, both rows of the weak gem are LR-rows, since if exactly one is an LR-row, then assertions~\ref{item:2nested3},~\ref{item:2nested4} and~\ref{item:2nested7} of Definition~\ref{def:2-nested} hold and thus it is not possible to have a weak gem. However, in that case, assertion~\ref{item:2nested5} of Definition~\ref{def:2-nested} guarantees that this is also not possible.

Finally, there is no badly-colored doubly-weak gem since assertions~\ref{item:2nested4},~\ref{item:2nested5} and~\ref{item:2nested9} of Definition~\ref{def:2-nested} hold. 

\vspace{1mm}
Now, let us see that $A$ is admissible. Since there is an LR-ordering of the columns, there are no $D_0$, $D_2$, $D_3$, $D_6$, $D_7$, $D_8$ or $D_{11}$ contained as a subconfiguration in $A$. Moreover, by assertion~\ref{item:2nested5} of Definition~\ref{def:2-nested}, there is no $D_1$ contained in $A$ as a subconfiguration.  	%D0, D2, D3, D6 D7 D8 D11 son por LR-ordering
As we have previously seen, there are no monochromatic gems or monochromatic weak gems. Hence, it is easy to see that if there is a total block bi-coloring, then $A$ contains none of the matrices in $\mathcal{S}$ or $\mathcal{P}$ as a subconfiguration.
Suppose $A$ contains $D_4$ as a subconfiguration. By assertion~\ref{item:2nested8} of Definition~\ref{def:2-nested}, if there are two L-blocks of non-LR-rows colored with distinct colors, then every LR-row has a nonempty L-block, and in this case such an L-block is contained in both rows labeled with L. However, by assertion~\ref{item:2nested3} of Definition~\ref{def:2-nested}, the L-block of the LR-row is properly contained in the L-blocks of the non-LR-rows, thus it must be colored with a distinct color than the color assigned to each L-block of a non-LR-row, and this leads to a contradiction.
By assertion~\ref{item:2nested3} of Definition~\ref{def:2-nested}, there is no $D_5$ contained in $A$ as a subconfiguration. 
Let us suppose $A$ contains $D_9$ as a subconfiguration induced by the rows $f_1$, $f_2$ and $f_3$, where $f_1$ is labeled with L and $f_2$ and $f_3$ are LR-rows. Suppose that $f_1$ is colored with red. Since the L-block of $f_2$ is contained in $f_1$, by assertion~\ref{item:2nested3} of Definition~\ref{def:2-nested}, then the L-block of $f_2$ is colored with blue. The same holds for the L-block of $f_3$. However, $f_2$ and $f_3$ are not nested, thus by by assertion~\ref{item:2nested9} of Definition~\ref{def:2-nested}, the L-blocks of $f_2$ and $f_3$ are colored with distinct colors, which results in a contradiction.

Let us suppose $A$ contains $D_{10}$ as a subconfiguration induced by the rows $f_1$, $f_2$, $f_3$ and $f_4$, where $f_1$ is labeled with L and colored with red, $f_2$ is labeled with R and colored with blue, and $f_3$ and $f_4$ are LR-rows. 
Since the L-block of $f_3$ is properly contained in $f_1$, then by assertion~\ref{item:2nested3} of Definition~\ref{def:2-nested}, it is colored with blue. By assertion~\ref{item:2nested1} of Definition~\ref{def:2-nested}, the R-block of $f_3$ is colored with red. Using a similar argument, we assert that the R-block of $f_4$ is colored with red and the L-block of $f_4$ is colored with blue. However, $f_3$ and $f_4$ are non-disjoint and non-nested, thus the L-block of $f_3$ and the R-block of $f_4$ are colored with distinct colors, which results in a contradiction.

By Lemma~\ref{lema:if_suitable_noM}, since there is a suitable LR-ordering, then $A$ contains  no $M_0$, $M_{II}(4)$, $M_V$ or $S_0(k)$ as a subconfiguration for any even $k \geq 4$. 

It follows from assertion~\ref{item:2nested9} of Definition~\ref{def:2-nested} and the fact that there is an LR-ordering, that $A$ contains neither $D_{12}$ nor $D_{13}$ as a subconfiguration.
Therefore $A$ is partially $2$-nested.

\vspace{1mm}
Finally, we will see that the total block bi-coloring for $A$ induces a proper total $2$-coloring of $A$. 
Since all the assertions of Definition~\ref{def:2-nested} hold, it is straightforward that there are no monochromatic gems or monochromatic weak gems or badly-colored weak gems in $A$. For more details on this, see Remark~\ref{obs:partially2nested_hasnogems} and Lemma~\ref{lema:2-nested_if} since the same arguments are detailed there. 
Moreover, since assertion~\ref{item:2nested1} of Definition~\ref{def:2-nested} holds, the L-block and R-block of the same LR-row are colored with distinct colors. Therefore, it follows that a total block bi-coloring of $A$ induces a proper total $2$-coloring of $A$. 
\end{proof}

\vspace{2mm}
The following corollary is a straightforward consequence of the previous results.

\begin{corollary} \label{lema:B_ext_2-nested} %\label{lema:B_ext_LRS*}
	Let $A$ be an enriched matrix. If $A$ is partially $2$-nested and $B$ is obtained from $A$ by extending its partial coloring to a total proper $2$-coloring, then $B$ is $2$-nested if and only if for each LR-row its L-block and R-block are colored with distinct colors and $B$ contains no monochromatic gems, monochromatic weak gems or badly-colored doubly-weak gems as subconfigurations.
	
	%$B$ contains no monochromatic gems or monochromatic weak gems induced by at most one $c_f$ column as subconfigurations.
	
	% Moreover, in that case $B$ is $2$-nested if and only if $B+$ contains no monochromatic gems or monochromatic weak gems induced by at most one $c_f$ column.
\end{corollary}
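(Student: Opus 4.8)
The plan is to prove Corollary~\ref{lema:B_ext_2-nested} as a direct combination of Lemmas~\ref{lema:2-nested_if} and~\ref{lema:2-nested_onlyif}, applied to the matrix $B$ rather than doing any new structural work. The statement asserts an equivalence, so I would establish the two implications separately, and the key observation is that $B$ is already $2$-colored (no uncolored rows remain), which lets me bypass the extension machinery in one direction.

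For the forward implication, I would assume $B$ is $2$-nested. By Lemma~\ref{lema:2-nested_onlyif}, $B$ is then partially $2$-nested and its total block bi-coloring induces a proper total $2$-coloring of $B$. By the very definition of proper $2$-coloring (Definition~\ref{def:proper2coloring}), this immediately yields that the L-block and R-block of each LR-row are colored with distinct colors and that $B$ contains no monochromatic gems, no monochromatic weak gems and no badly-colored doubly-weak gems as subconfigurations. This is exactly the right-hand side of the equivalence, so this direction is essentially a restatement.

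For the reverse implication, I would assume that $A$ is partially $2$-nested, that $B$ arises from $A$ by extending the partial coloring to a total proper $2$-coloring, and that in $B$ each LR-row has its L-block and R-block colored with distinct colors while $B$ contains none of the three types of monochromatic/badly-colored gems. The goal is to invoke Lemma~\ref{lema:2-nested_if}, which concludes that a matrix is $2$-nested provided it is partially $2$-nested and its partial block bi-coloring extends to a total proper $2$-coloring. Here I must be careful about \emph{which} matrix plays the role of the hypothesis: the cleanest route is to observe that $B$ itself is partially $2$-nested (its coloring is total, hence trivially a ``partial'' coloring in the sense of Lemma~\ref{lema:part2nested_is_2_colored}, since $A$ partially $2$-nested plus the given extension being proper gives that $B$ is partially $2$-nested), and that the identity coloring trivially extends the coloring of $B$ to a total proper $2$-coloring of $B$ because the hypotheses guarantee exactly the three no-gem conditions and the distinct-color condition that Definition~\ref{def:proper2coloring} requires for a total proper $2$-coloring. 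Then Lemma~\ref{lema:2-nested_if} applied to $B$ gives that $B$ is $2$-nested.

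The main obstacle I anticipate is purely bookkeeping rather than mathematical: I must verify that the coloring of $B$ genuinely qualifies as a total proper $2$-coloring in the precise sense of Definition~\ref{def:proper2coloring}, which also requires $B$ to be admissible, and I must confirm that partial $2$-nestedness transfers from $A$ to $B$ under the extension (so that the admissibility, LR-orderability, and absence of $M_0$, $M_{II}(4)$, $M_V$, $S_0(k)$ carry over unchanged, since these depend only on the underlying matrix and labels, which are identical in $A$ and $B$). The three no-gem hypotheses together with the distinct-color-per-LR-row hypothesis are precisely the extra conditions distinguishing a proper total $2$-coloring from a mere total coloring, so once the underlying-matrix properties are seen to be invariant under recoloring, the equivalence follows by citing Lemmas~\ref{lema:2-nested_if} and~\ref{lema:2-nested_onlyif} without further computation.
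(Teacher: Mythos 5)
Your proposal is correct and is essentially the paper's own argument: the paper gives no explicit proof, stating only that the corollary "is a straightforward consequence of the previous results," namely Lemmas~\ref{lema:2-nested_if} and~\ref{lema:2-nested_onlyif} (together with Lemma~\ref{lema:part2nested_is_2_colored} and Definition~\ref{def:proper2coloring}), which is exactly the combination you spell out. Your bookkeeping—applying Lemma~\ref{lema:2-nested_onlyif} to $B$ for the forward direction and Lemma~\ref{lema:2-nested_if} to $B$ (with the identity extension, noting that admissibility comes from the properness hypothesis while LR-orderability and the absence of $M_0$, $M_{II}(4)$, $M_V$, $S_0(k)$ are color-independent and transfer from $A$) for the reverse—is the intended fleshed-out version of that one-line justification.
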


%\begin{theorem} %\label{teo:2-nested_caract}
%	Let $A$ be an enriched matrix and let $\Pi$ be a suitable LR-ordering of $A$.
%	Then, $A$ is $2$-nested if and only if $A$ is admissible, $A$ contains no $M_0$, $M_{II}(4)$, $M_V$, monochromatic gems, monochromatic weak gems, badly-colored doubly-weak gems, $F_0$, $F_1(k)$, $F_2(k)$, $F'_0$, $F_1'(k)$ or $F_2'(k)$ for every odd $k\geq 5$, and $A^*_{\tagg}$ contains no Tucker matrices, $M_4'$, $M_4''$, $M_5'$, $M_5''$, $M'_2(k)$, $M''_2(k)$, $M_3'(k)$, $M_3''(k)$ or $M_3'''(k)$ for every $k \geq 4$ as subconfigurations. 	
%\end{theorem} 

The main result of this work is Theorem~\ref{teo:2-nested_charact}, whose proof follows directly from Theorem~\ref{teo:case1_2nested} --which was proven in~\cite{PDGS18}-- and Theorem~\ref{teo:case2_2nested}, whose proof will be given below.

\begin{theorem}[\cite{PDGS18}] \label{teo:case1_2nested}
Let $A$ be an enriched matrix such that every row of $A$ is unlabeled. Then, $A$ is $2$-nested if and only if $A$ contains no $F_0$, $F_1(k)$, $F_2(k)$ or their dual matrices for every odd $k\geq 5$ as subconfigurations (see Figure~\ref{fig:forb_F})	
\end{theorem}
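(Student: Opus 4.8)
The plan is to reduce the definition of $2$-nestedness to two transparent conditions in the unlabeled setting and then recognize the forbidden matrices as the minimal obstructions to the second one. Since every row of $A$ is unlabeled, an LR-ordering (Definition~\ref{def:LR-orderable}) is simply a consecutive-ones ordering, every block is a U-block, and each U-block is exactly the set of columns of its row. Consequently all clauses of Definition~\ref{def:2-nested} referring to L-blocks, R-blocks or LR-rows are vacuous, and the only effective requirement is clause~(\ref{item:2nested6}): any two U-blocks sharing a color must be disjoint or nested. Writing $\Omega(A)$ for the overlap graph of $A$ (one vertex per row, an edge between two rows that overlap in the sense of Section~\ref{section:basic_defs}), and noting that overlap is a purely set-theoretic, ordering-independent relation, I would record the equivalence: $A$ is $2$-nested if and only if $A$ has the consecutive-ones property and $\Omega(A)$ is bipartite. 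Indeed, a proper $2$-coloring of $\Omega(A)$ assigns the same color only to non-overlapping (hence disjoint or nested) rows, which is precisely clause~(\ref{item:2nested6}); conversely a valid block bi-coloring is such a proper $2$-coloring. The consecutive-ones property is itself equivalent, by Theorem~\ref{teo:tucker}, to $A$ (equivalently $A^*$) containing no Tucker matrix, so in the surrounding framework it is controlled separately and the residual content of the statement lies in the bipartiteness of $\Omega(A)$.

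For the necessity direction I would check directly that each of $F_0$, $F_1(k)$ and $F_2(k)$ (Figure~\ref{fig:forb_F}) has the consecutive-ones property in the displayed column order, yet its overlap graph is an induced odd cycle: a triangle $C_3$ for $F_0$, and a cycle $C_k$ for $F_1(k)$ and $F_2(k)$ with $k$ odd. For $F_1(k)$ and $F_2(k)$ this amounts to verifying that the listed rows overlap exactly in a cyclic pattern, the remaining pairs being disjoint or nested. An odd cycle is not bipartite, so none of these matrices is $2$-nested. Since $2$-nestedness is hereditary under subconfigurations --- restricting an LR-ordering and a block bi-coloring to a submatrix preserves both consecutiveness and clause~(\ref{item:2nested6}) --- any enriched matrix containing one of them as a subconfiguration fails to be $2$-nested. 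As the matrices are unlabeled, each coincides with its own dual, so the dual clause is automatic.

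For the sufficiency direction, assume $A$ has the consecutive-ones property, fix such an ordering so that each row is an interval, and suppose toward a contradiction that $\Omega(A)$ is not bipartite. I would take a shortest odd cycle $v_1,\dots,v_{2t+1}$ in $\Omega(A)$; minimality makes it induced, so consecutive intervals overlap while non-consecutive ones are disjoint or nested. The core of the argument is to read off a forbidden subconfiguration from the geometry of these intervals by pruning the columns down to the transition columns where consecutive overlaps begin and end. When $t=1$ the three pairwise-overlapping intervals yield $F_0$. When $t\ge 2$ I expect exactly two combinatorial types to survive: either the cycle is closed by a single interval that spans and nests over an internal chain of overlapping intervals (producing $F_2(2t+1)$), or it is carried by two large mutually overlapping intervals together with a descending chain of shorter ones (producing $F_1(2t+1)$). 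Parity forces $k=2t+1$ odd, and the smallest length beyond $3$ is $5$, matching the range $k\ge 5$.

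The main obstacle is this last extraction step: one must show that, after choosing the right columns, a shortest odd cycle of overlapping intervals is forced into one of precisely these two shapes, ruling out that nesting or disjointness among the cycle's own vertices (or the presence of extra intervals) disrupts the target pattern, and that no shorter configuration can realize an odd cycle while retaining the consecutive-ones property. Organizing this case analysis cleanly --- essentially a structural classification of the minimal non-bipartite interval overlap graphs together with their minimal consecutive-ones realizations --- is where the real work lies, and the appearance of two distinct output shapes is exactly why both families $F_1$ and $F_2$, rather than a single one, are required.
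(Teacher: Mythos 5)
Your proposal cannot be matched line-by-line against a proof in this paper, because the paper does not prove Theorem~\ref{teo:case1_2nested}: it is imported from~\cite{PDGS18}, and the surrounding text only records the same reduction you make, namely that for an all-unlabeled enriched matrix $2$-nestedness amounts to a C$1$P ordering together with a $2$-coloring of the rows under which same-colored rows are pairwise disjoint or nested. Your reformulation as ``C$1$P plus bipartiteness of the overlap graph $\Omega(A)$'' is correct, and it is exactly the mechanism this paper deploys in the labeled case (the auxiliary graph $H(A+)$, Lemma~\ref{lema:2-color-extension}, and the odd-cycle analysis in the proof of Theorem~\ref{teo:case2_2nested}, whose Case~3 explicitly defers the all-unlabeled cycle to the present theorem). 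Your necessity direction is complete: the overlap graphs of $F_0$, $F_1(k)$, $F_2(k)$ are odd cycles, $2$-nestedness is hereditary under subconfigurations, and duality is vacuous for unlabeled matrices. Your decision to treat the C$1$P as a side condition is also the right reading of the statement, and in fact a necessary one: $\mathcal{F}$-freeness alone cannot force the C$1$P (the Tucker matrix $M_{I}(3)$ contains no member of $\mathcal{F}$ yet admits no C$1$P ordering), which is why the full Theorem~\ref{teo:2-nested_charact} lists the Tucker condition on $A^*$ separately.

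The genuine gap is the sufficiency direction, and it is where all the mathematical content lives. You must prove that any chordless odd cycle $v_1,\dots,v_k$ ($k\ge 5$) of intervals under a fixed C$1$P ordering --- consecutive rows overlapping, non-consecutive rows disjoint or nested --- admits a choice of columns on which the rows form exactly $F_1(k)$ or $F_2(k)$ as a subconfiguration; you state this as an expectation (``I expect exactly two combinatorial types to survive'') rather than establish it. What is missing is the structural argument: that the cycle is forced into a staircase in which each $v_{i+1}$ overlaps $v_i$ on the side away from $v_{i-1}$, that the nesting/disjointness constraints on non-consecutive rows permit only the two closures you describe (one spanning row over the chain, or two overlapping spanning rows), that odd parity excludes every other closure, and finally the explicit selection of transition columns realizing the $0/1$ pattern of $F_1(k)$ or $F_2(k)$ exactly. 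This case analysis is precisely what~\cite{PDGS18} carries out, and what the analogous labeled cases in Section~\ref{subsec:teo_2nestedmatrices} of this paper occupy several pages doing. Until it is written down, your argument proves necessity but only outlines a plan for sufficiency.
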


Theorem~\ref{teo:case1_2nested} is a particular case of Theorem~\ref{teo:2-nested_charact}. More precisely, this theorem gives necessary and suficient conditions for an enriched matrix with all unlabeled --and thus uncolored-- rows to be $2$-nested. Notice that such an enriched matrix is simply a $(0,1)$-matrix. Moreover, the condition of being $2$-nested for this particular case reduces to admiting a C$1$P ordering and a $2$-coloring assignment for the rows such that both the red and blue subconfigurations are nested matrices.

For the remaining enriched matrices --this is, for every enriched matrix with at least one labeled row-- we have the following lemma that characterises all the forbidden subconfigurations.

\begin{theorem} \label{teo:case2_2nested}
Let $A$ be an enriched matrix such that $A$ has at least one labeled row. Then, $A$ is $2$-nested if and only if $A$ contains none of the following listed matrices or their dual matrices as subconfigurations: 
	\begin{itemize}		
	\item $M_0$, $M_{II}(4)$, $M_V$ or $S_0(k)$ for every even $k$ (See Figure~\ref{fig:forb_M_chiquitas})
	\item Every enriched matrix in the family $\mathcal{D}$ (See Figure~\ref{fig:forb_D}) 
	\item Every enriched matrix in the family $\mathcal{F}$ (See Figure~\ref{fig:forb_F}) 
	\item Every enriched matrix in the family $\mathcal{S}$ (See Figure~\ref{fig:forb_S})
	\item Every enriched matrix in the family $\mathcal{P}$ (See Figure~\ref{fig:forb_P})
	\item Monochromatic gems, monochromatic weak gems, badly-colored doubly-weak gems 
	\end{itemize}
and $A^*$ contains no Tucker matrices and none of the enriched matrices in $\mathcal{M}$ or their dual matrices as subconfigurations (See Figure~\ref{fig:forb_LR-orderable}).
\end{theorem}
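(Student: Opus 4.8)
The plan is to obtain the equivalence by assembling the two one-directional Lemmas already proved, namely Lemma~\ref{lema:2-nested_if} and Lemma~\ref{lema:2-nested_onlyif}, which together say that $A$ is $2$-nested if and only if $A$ is partially $2$-nested \emph{and} its given partial block bi-coloring extends to a total proper $2$-coloring of $A$. The first conjunct is already translated into forbidden subconfigurations by Corollary~\ref{cor:partially_2-nested_caract}: partial $2$-nestedness accounts for admissibility (hence $\{\mathcal{D},\mathcal{S},\mathcal{P}\}$-freeness), for the absence of $M_0$, $M_{II}(4)$, $M_V$ and of monochromatic gems, monochromatic weak gems and badly-colored doubly-weak gems in $A$, and for $A^*$ (through $A^*_{\tagg}$) containing no Tucker matrix and no member of $\mathcal{M}$ or its dual. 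The absence of $S_0(k)$ for even $k$ is then supplied by Lemma~\ref{lema:if_suitable_noM}, since a partially $2$-nested matrix admits a suitable LR-ordering by Lemma~\ref{lema:hay_suitable_ordering}. Consequently the whole theorem reduces to a single remaining claim: for a partially $2$-nested matrix $A$, the partial coloring extends to a total proper $2$-coloring if and only if $A$ contains no member of the family $\mathcal{F}$ (Figure~\ref{fig:forb_F}) or its dual as a subconfiguration.

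To attack this residual claim I would fix a suitable LR-ordering $\Pi$ (Lemma~\ref{lema:hay_suitable_ordering}) and pass to the auxiliary matrix $A+$ of Definition~\ref{def:A+}, in which every row carries only an L-, an R- or a U-block, and each split LR-row contributes an L-row and an R-row linked by a distinguished column $c_f$; Remark~\ref{obs:props_de_A+_suitable} guarantees that these new L- and R-rows are disjoint from the opposite-side blocks, so that under $\Pi$ every block is an interval of columns. By Corollary~\ref{lema:B_ext_2-nested} together with Definition~\ref{def:proper2coloring}, extending the partial coloring to a total proper $2$-coloring of $A$ is equivalent to $2$-coloring the remaining blocks so that no two overlapping blocks share a color and the two halves of each split LR-row receive opposite colors; i.e.\ to properly $2$-coloring the \emph{conflict graph} $H$ whose vertices are the blocks of $A+$, whose edges join overlapping (crossing) blocks, and in which the colored labeled rows and the linking columns $c_f$ (forcing the two halves apart by condition~\ref{item:2nested1} of Definition~\ref{def:2-nested}) supply a partial proper $2$-coloring. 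The extendability of this partial coloring of $H$ is then governed by the combinatorial criterion of Lemma~\ref{lema:2-color-extension}.

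The heart of the proof is to translate each of the five obstructions of Lemma~\ref{lema:2-color-extension}---an induced uncolored odd cycle, an induced odd cycle with exactly one or with exactly two consecutive colored vertices, an even induced path with oppositely colored endpoints, and an odd induced path with equally colored endpoints---into a subconfiguration of $A$ belonging to $\mathcal{F}$ or its dual. Since the blocks are intervals under $\Pi$, an induced cycle of $H$ is a closed chain of pairwise-consecutively-overlapping intervals, and recording the incidences of these intervals against a minimal witnessing set of columns reproduces exactly $F_0$, $F_1(k)$ and $F_2(k)$ for odd $k\ge 5$ when all the blocks are uncolored U-blocks (this is the labeled-free reading that already appears in Theorem~\ref{teo:case1_2nested}). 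When some of the cyclic or path blocks are pre-colored L- or R-blocks, the same reading yields the labeled variants $F'_0$, $F''_0$, $F'_1(k)$ and $F'_2(k)$ and their duals, with the positions of the colored endpoints dictating the L/R labels; the parity bookkeeping forces $k$ odd and decides between the primed and doubly-primed forms, while the bad-path obstructions close up into these same matrices once one appends the distinguished rows of $A^*$ or uses the linking columns $c_f$. The converse is routine: each matrix of $\mathcal{F}$ exhibits directly the corresponding forbidden path or odd cycle in $H$, so by Lemma~\ref{lema:2-color-extension} no extension exists.

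I expect the translation carried out in the previous paragraph to be the main obstacle. Verifying that the odd cycles and bad paths of the interval conflict graph $H$ are in bijection---up to the permitted row/column permutations and L/R/LR relabelings---with \emph{precisely} the members of $\mathcal{F}$ (and nothing larger) is a delicate case analysis. The chief subtleties are controlling the parity of the cycle length, correctly accounting for the distinguished columns $c_f$ so that each split LR-row behaves as a single colored vertex of $H$, and ensuring minimality so that one extracts exactly the listed matrices rather than bigger configurations merely containing them.
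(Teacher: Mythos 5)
Your global reduction and machinery coincide with the paper's own proof: assembling Lemmas~\ref{lema:2-nested_if} and~\ref{lema:2-nested_onlyif}, invoking Corollary~\ref{cor:partially_2-nested_caract} (with Lemmas~\ref{lema:hay_suitable_ordering} and~\ref{lema:if_suitable_noM} covering $S_0(k)$), passing to the auxiliary matrix $A+$ built from a suitable LR-ordering, forming the partially colored conflict graph on its rows, and applying the extension criterion of Lemma~\ref{lema:2-color-extension} is exactly what the paper does, in the same order. The gap is in the step you yourself flag as the heart of the argument, and it is not merely a matter of delicate bookkeeping: your claim that each of the five obstructions of Lemma~\ref{lema:2-color-extension} can be translated into a subconfiguration of $A$ belonging to $\mathcal{F}$ or its dual is false, and an execution organized around that expectation would stall.

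In the paper's case analysis the two \emph{path} obstructions never produce $\mathcal{F}$-matrices at all. An odd induced path with unlabeled interior and equally colored L-endpoints yields $S_2(k)$ or $S_3(k)$; paths passing through LR-vertices yield $S_5(k)$, $P_0(k,l)$, $P_1(k,l)$, $P_2(k,l)$, $D_6$, $D_8$, $D_{10}$, etc. These sub-cases are closed not by exhibiting a member of $\mathcal{F}$ but by a contradiction with admissibility (available because $A$ is partially $2$-nested): such obstructions simply cannot occur in $H(A+)$. The matrices $F'_0$, $F''_0$, $F_1(k)$, $F_2(k)$, $F'_1(k)$, $F'_2(k)$ arise only inside some sub-cases of the three odd-cycle obstructions, and even there many sub-cases are again eliminated via admissibility ($S_1$, $S_7$, $S_8$, $D_9$, \ldots). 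Consequently your proposed mechanism for the paths --- ``closing them up'' into $\mathcal{F}$-matrices by appending the distinguished rows of $A^*$ or using the linking columns $c_f$ --- has no counterpart in a correct proof; the missing idea is that $\{\mathcal{D},\mathcal{S},\mathcal{P}\}$-freeness must be re-used as the eliminating tool throughout the case analysis, with $\mathcal{F}$ surfacing only in the cycle cases. A smaller inaccuracy pointing the same way: the conflict graph's edges are not simply ``overlapping blocks''; for an LR-block against a non-LR-block the conflict is ``non-disjoint and not containing the LR-block'', and getting this relation exactly right is what makes the translation between colorability and the absence of monochromatic gems, weak gems and badly-colored doubly-weak gems work.
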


The proof is organized as follows. The `only if' part follows almost immediately using Lemma~\ref{lema:2-nested_onlyif} and the char\-ac\-ter\-i\-za\-tions of admissibility, LR-orderable and partially $2$-nested given throughout this section. 
For the `if' part, we will define an auxiliary graph $H(A)$ that is partially $2$-colored according to the pre-coloring of the blocks of $A$. Toward a contradiction, we suppose that $H(A)$ is not bipartite. Using the characterization given in Lemma~\ref{lema:2-color-extension}, we know there is one of the $5$ possible kinds of paths or cycles. We analyse each case and reach a contradiction.

\begin{proof}	
	Let $A$ be an enriched matrix with at least one labeled row in $A$ (labeled with L, R or LR), and suppose $A$ is $2$-nested. In particular, $A$ is partially $2$-nested with the given coloring and the block bi-coloring induces a total proper $2$-coloring of $A$. 
	Thus, by Corollary~\ref{cor:partially_2-nested_caract}, $A$ is admissible and contains no $M_0$, $M_{II}(4)$, $M_V$, $S_0(k)$ for any even $k \geq 4$, monochromatic gems, monochromatic weak gems or badly-colored doubly-weak gems as subconfigurations, and $A^*_\tagg$ contains no Tucker matrices, $M_4'$, $M_4''$, $M_5'$, $M_5''$, $M'_2(k)$, $M''_2(k)$, $M_3'(k)$, $M_3''(k)$, $M_3'''(k)$, for any $k \geq 4$ as subconfigurations. 
	In particular, since $A$ is admissible, there is no $D_{13}$ induced by any three LR-rows.
	
	Moreover, notice that every pair of consecutive rows of any of the matrices $F_0$, $F_1(k)$, and $F_2(k)$ for all odd $k \geq 5$ induces a gem, and there is an odd number of rows in each matrix. Thus, if one of these matrices is a submatrix of $A_\tagg$, then there is no proper $2$-coloring of the blocks. Therefore, $A$ contains no $F_0$, $F_1(k)$, and $F_2(k)$ for any odd $k \geq 5$ as subconfigurations. A similar argument holds for $F'_0$, $F_1'(k)$, $F_2'(k)$, changing 'gem' by 'weak gem' whenever one of the two rows considered is a labeled row. 
	
	\vspace{1mm}	
	Conversely, suppose $A$ is not $2$-nested. Henceforth, we assume that $A$ is admissible. 
	
	If $A$ is not partially $2$-nested, then either $A$ contains $M_0$, $M_{II}(4)$, $M_V$, $S_0(k)$ for some even $k \geq 4$ as a subconfiguration, or there is a subconfiguration of $M$ in $A^*_\tagg$ such that $M$ is one of the forbidden subconfigurations for partially $2$-nested stated above. %\todo{?}, and thus \todo{thus?} $M$ is a subconfiguration of $A^*_\tagg$.	 
	Henceforth, we assume that $A$ is partially $2$-nested.
	
	If $A$ is partially $2$-nested but is not $2$-nested, then the pre-coloring of the rows of $A$ (which is a proper partial $2$-coloring of $A$ since $A$ is admissible) cannot be extended to a total proper $2$-coloring of $A$. 
	
%%%%%%%%%%%%%%
%										  %
%     caso: hay L, R o LR     %
%										  %
%%%%%%%%%%%%%%

%\textit{There is at least one labeled row in $A$.} 
We wish to extend the partial pre-coloring given for $A$. By Corollary~\ref{lema:B_ext_2-nested}, if $B$ is obtained by extending the pre-coloring of $A$ and $B$ is $2$-nested, then neither two blocks corresponding to the same LR-row are colored with the same color, nor there are monochromatic gems, monochromatic weak gems or badly-colored doubly-weak gems in $B$. 
	Let us consider the auxiliary matrix $A+$, defined from a suitable LR-ordering $\Pi$ of the columns of $A$.
	Notice that, if there is at least one labeled row in $A$, then there is at least one labeled row in $A+$ and these labeled rows in $A+$ correspond to rows of $A$ that are labeled with either L, R, or LR. 
	
\vspace{1mm}
	We now define a graph $H = H(A+)$, whose vertices are the rows of $A+$. We say a vertex is an \emph{LR-vertex (resp.\ non-LR vertex)} if it corresponds to a block of an LR-row (resp.\ non-LR-row) of $A$. The adjacencies in $H$ are as follows:
	\begin{itemize}
		\item Two non-LR vertices are adjacent in $H$ if the underlying uncolored submatrix of $A$ determined by these two rows contains a gem or a weak gem as a subconfiguration. %los marcados con L o R y los no marcados son adyacentes si forman un gem weak 
		\item Two LR-vertices corresponding to the same LR-row in $A$ are adjacent in $H$.
		\item Two LR-vertices $v_1$ and $v_2$ corresponding to distinct LR-rows are adjacent if $v_1$ and $v_2$ are labeled with the same letter in $A+$ and the LR-rows corresponding to $v_1$ and $v_2$ overlap in $A$. %Equivalently, the underlying uncolored submatrix of $A$ determined by the LR-rows representing $v_1$ and $v_2$ contains a doubly-weak gem. %Equivalently, the underlying uncolored submatrix of $A_\tagg$ determined by the LR-rows corresponding $v_1$ and $v_2$ contains a doubly-weak gem. 
		\item An LR-vertex $v_1$ and a non-LR vertex $v_2$ are adjacent in $H$ if the rows corresponding to $v_1$ and $v_2$ are not disjoint and $v_2$ is not contained in $v_1$. %Equivalently, the underlying uncolored submatrix of $A$ determined by the rows corresponding $v_1$ and $v_2$ contains a weak gem.
	\end{itemize}	  
	The vertices of $H$ are partially colored with the pre-coloring given for the rows of $A$.
	
	Notice that every pair of vertices corresponding to the same LR-row $f$ induces a gem in $A+$ that contains the column $c_f$, and two adjacent LR-vertices $v_1$ and $v_2$ in $H$ do not induce any kind of gem in $A+$, except when considering both columns $c_{r_1}$ and $c_{r_2}$.
	
	Let us consider a path $P$ in the graph $H$. We say a subpath $Q$ of $P$ is an \emph{LR-subpath} if every vertex in $Q$ is an LR-vertex. We say an LR-subpath $Q$ in $P$ is \emph{maximal} if $Q$ is not properly contained in any other LR-subpath of $P$.
We say that two LR-vertices $v_i$ and $v_j$ are \emph{consecutive }in the path $P$ (resp.\ in a cycle $C$ in $H$) if either $j=i+1$ or $v_l$ is unlabeled for every $l=i+1, \ldots, j-1$. 
			
The following claims will be useful throughout the proof.

\begin{claim}  \label{claim:1_teo2nested} 
	Let $C$ be a cycle in $H= H(A+)$. Then, there are at most 3 consecutive LR-vertices labeled with the same letter. The same holds for any path $P$ in $H$. 
\end{claim}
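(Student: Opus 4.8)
The plan is to work inside the two-chain structure of the LR-rows and to show that a run of four same-label LR-vertices forces one of the subconfigurations already excluded by admissibility. First I would record the local shape of the vertices involved. Since $A+$ is built from a suitable LR-ordering $\Pi$, every LR-row of $A$ is, after reordering, a (possibly empty) prefix block of $1$'s, then a block of $0$'s, then a (possibly empty) suffix block of $1$'s; its L-block is the prefix and its R-block the suffix. Consequently the L-blocks of distinct LR-rows all begin in the first column and are therefore pairwise nested, and likewise all R-blocks are pairwise nested. In particular two L-labeled LR-vertices can never correspond to disjoint rows, so by the third adjacency rule defining $H$ they are adjacent precisely when their rows overlap, and non-adjacent precisely when their rows are nested. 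Finally, by admissibility and Remark~\ref{rem:A_LR_2nested} the LR-rows split into two inclusion-chains $S_1,S_2$, and no three LR-rows pairwise overlap (otherwise we would find $D_{11}$, $D_{12}$ or $D_{13}$).

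Now suppose, for contradiction, that a path $P$ (or a cycle $C$, handled identically, and with the R-labeled case following by passing to the dual matrix $\tilde A$) contains four consecutive LR-vertices $v_1,v_2,v_3,v_4$ all labeled L, arising from LR-rows $f_1,f_2,f_3,f_4$. By "consecutive" only unlabeled (U-)vertices may occur between successive $v_i$, so for each $i$ either $v_i$ and $v_{i+1}$ are joined by a direct edge of $H$ (forcing $f_i$ and $f_{i+1}$ to overlap, hence to lie in different chains) or they are joined through a nonempty chain of U-vertices. In the latter case $f_i,f_{i+1}$ are nested and, by the fourth adjacency rule, each intervening U-row $g$ meets the prefix of one $f_i$ without being contained in it, i.e.\ $g$ straddles the right endpoint of that L-block. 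I would then classify the four rows by their membership in $S_1,S_2$ and by the right endpoints $x_1,x_2,x_3,x_4$ of their nested L-blocks, and read off, column by column, the pattern that these rows and the straddling U-rows impose.

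The heart of the argument is a finite case analysis showing that every such configuration already contains a forbidden subconfiguration. When a consecutive pair lies in different chains I use its overlap columns directly; when a consecutive pair is nested I use the intervening straddling U-rows, which form an $M_I$-type staircase between the two nested prefixes. Tracking these patterns across all four rows, a fourth same-label LR-vertex always completes either a triple of pairwise-overlapping LR-rows (impossible for a width-two poset, i.e.\ yielding a $D_{11}$, $D_{12}$ or $D_{13}$) or a staircase that, together with one or two of the $f_i$, realizes one of the matrices $S_1(k),\dots,S_8(k)$ of the family $\mathcal S$ for the appropriate parity of $k$. Either outcome contradicts admissibility, so no four consecutive same-label LR-vertices exist, while three can be accommodated without closing such a staircase, which is exactly the asserted bound.

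The step I expect to be the main obstacle is precisely this last case analysis: a priori two consecutive same-label LR-vertices need not be adjacent in $H$, so the argument cannot be reduced to the overlap relation alone, and one must keep careful track of the U-rows sitting between successive LR-vertices and of the exact columns in which the four nested prefixes and their suffixes differ. Making the bookkeeping uniform, so that each chain-membership pattern is matched to a specific forbidden matrix of $\mathcal D$ or $\mathcal S$, is where the real work lies; the structural reductions of the first paragraph are what make that bookkeeping finite and manageable.
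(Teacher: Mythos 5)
You set the stage correctly: under a suitable LR-ordering every LR-row is a prefix block plus a suffix block, so same-letter LR-vertices are never disjoint and are adjacent in $H$ exactly when the corresponding LR-rows overlap (hence non-adjacent exactly when they are nested), and admissibility yields the two-chain structure of Remark~\ref{rem:A_LR_2nested}. You even note, correctly, that consecutive LR-vertices need not be adjacent in $H$ --- a case the paper's own proof passes over in silence. But the proof itself never happens: everything is delegated to the ``finite case analysis'' of your third paragraph, which is stated as an expectation (``a fourth same-label LR-vertex always completes either a triple of pairwise-overlapping LR-rows \dots\ or a staircase that \dots\ realizes one of the matrices $S_1(k),\dots,S_8(k)$'') and is never carried out; you say yourself that this is ``where the real work lies.''

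Moreover, the asserted dichotomy breaks down in the central case, the very one the paper's proof actually addresses. Suppose $v_1,v_2,v_3,v_4$ are four same-letter LR-vertices joined by direct edges of $H$, with no unlabeled vertices in between. Since the path is induced, the pairs $f_1f_2$, $f_2f_3$, $f_3f_4$ overlap while $f_1f_3$, $f_1f_4$, $f_2f_4$ are nested. Then every triple among the four rows contains a nested pair, so no $D_{11}$, $D_{12}$ or $D_{13}$ arises; and since this configuration involves no unlabeled rows whatsoever, there are no ``straddling U-rows'' from which to assemble a staircase, and no matrix of $\mathcal{S}$ can be exhibited (each of them contains non-LR-rows). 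So neither horn of your dichotomy applies precisely where the claim has content. The paper's proof confronts exactly this configuration with a direct containment argument on the four rows themselves: $v_4$ is nested with both $v_1$ and $v_2$, hence contains both or is contained in both (as $v_1$ and $v_2$ overlap), and the contradiction is then extracted from the nested/overlap relations alone, with no appeal to forbidden submatrices. Your proposal contains no substitute for this step, so the claim remains unproven.
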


Let $v_1$, $v_2$ and $v_3$ be 3 consecutive LR-vertices in $H$, all labeled with the same letter. Notice that any subset in $H$ of LR-vertices labeled with the same letter in $A+$ corresponds to a subset of the same size of distinct LR-rows in $A$. By definition, two LR-vertices are adjacent in $H$ only if they are labeled with the same letter and the corresponding rows in $A$ contain a gem, or equivalently, if they are not nested. 
Moreover, notice that once the columns of $A$ are ordered according to $\Pi$, these rows have a $1$ in the first non-tag column and a $1$ in the last non-tag column.
Hence, if there are 4 consecutive LR-vertices $v_1$, $v_2$, $v_3$ and $v_4$ in the cycle $C$ of $H$ and all of them are labeled with the same letter, then $v_1$ and $v_2$ are not nested, $v_2$ and $v_3$ are not nested and $v_1$ and $v_3$ must be nested. Thus, since $v_2$ and $v_4$ and $v_1$ and $v_4$ are also nested, then $v_4$ either contains $v_1$ and $v_2$, or is nested in both. In either case, since $v_3$ and $v_4$ are not nested, then $v_1$ and $v_3$ are not nested and this results in a contradiction. \QED

\begin{claim} \label{claim:2_teo2nested}
	There are at most 6 uncolored labeled consecutive vertices in $C$. The same holds for any path $P$ in $H$.
\end{claim}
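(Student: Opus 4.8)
The plan is to describe a maximal run of consecutive uncolored labeled vertices and to bound its length by combining Claim~\ref{claim:1_teo2nested} with the nesting structure of the LR-rows. First I would observe that, since $A$ has no uncolored rows labeled with \textbf{L} or \textbf{R}, a vertex of $H=H(A+)$ is uncolored and labeled exactly when it is an LR-vertex, i.e.\ it is the L-block or the R-block of an LR-row of $A$. Fixing the suitable LR-ordering $\Pi$ used to build $A+$, every L-block is a prefix of $\Pi$ and every R-block is a suffix, so the L-blocks are totally ordered by inclusion, the R-blocks are totally ordered by inclusion, and two LR-vertices with the \emph{same} label are adjacent in $H$ precisely when the corresponding LR-rows overlap. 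By Remark~\ref{rem:A_LR_2nested} (which uses that $A$ is $\mathcal{D}$-free, in particular free of $D_{11}$, $D_{12}$ and $D_{13}$), the LR-rows split into two chains $S_1,S_2$, each totally ordered by inclusion, and two LR-rows can overlap only if they lie in different chains.

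Next I would decompose a maximal run $W=v_1\cdots v_t$ of consecutive uncolored labeled vertices into its maximal blocks of equal label. Inside such a block consecutive vertices are same-letter LR-vertices, hence their rows overlap, while the only edges of $H$ that change the label along $W$ are the same-row edges $\{L_f,R_f\}$. Thus $W$ alternates maximal \emph{label-segments} joined by same-row edges, and by Claim~\ref{claim:1_teo2nested} each label-segment has at most $3$ vertices. The target $t\le 6$ will then follow once I control how these segments can be chained together, and the argument I intend to use is the nesting argument of Claim~\ref{claim:1_teo2nested} carried out simultaneously on the prefix order of the L-blocks and the suffix order of the R-blocks.

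For the core estimate I would argue by contradiction, assuming $W$ contains a seventh consecutive uncolored labeled vertex. Each label-switch along $W$ produces a \emph{pivot} row $p$ whose two blocks $L_p,R_p$ appear consecutively; distinct switches use distinct pivots, which must be nested (so lie in the same chain) because $C$ (resp.\ $P$) is induced and the outer same-label endpoints they create would otherwise be adjacent. Tracking the chain membership forced along each segment --- consecutive rows in a segment overlap and so cross chains, whereas non-consecutive ones nest and so stay in one chain --- I expect that a run long enough to exceed six vertices accumulates either three pairwise overlapping LR-rows, which form a size-$3$ antichain and hence a $D_{12}$ contradicting admissibility, or two overlapping rows whose same-label blocks are a chord of $C$, contradicting that $C$ is induced. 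This is precisely where the hypotheses forbidding the families $\mathcal{S}$ and $\mathcal{P}$ (and, through $A^*$, the matrices of $\mathcal{M}$) enter, as they rule out the remaining long-run patterns.

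The hard part will be this last case analysis: keeping honest track of which chain each L-block and each R-block belongs to as one extends a run past two label-segments, and verifying in every configuration that a chord or a listed forbidden subconfiguration appears before the count reaches seven. Since every step of the argument refers only to adjacencies in $H$ and to consecutiveness inside $W$, the statement for a cycle $C$ and for an arbitrary path $P$ is obtained by the identical reasoning, so I would phrase the proof uniformly for both.
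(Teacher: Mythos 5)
Your opening paragraphs reproduce exactly the two observations that constitute the paper's own (two-sentence) proof: uncolored labeled vertices are precisely the LR-vertices, same-letter segments contain at most three vertices by Claim~\ref{claim:1_teo2nested}, and a label switch can only occur across the edge joining the two blocks of a single LR-row. The genuine gap is the step you postpone to ``the hard part'': nothing in your argument shows that a run admits at most one label switch, and the contradiction you predict for longer runs --- three pairwise overlapping LR-rows giving $D_{12}$, or a chord of $C$ (resp.\ $P$) --- provably does not materialize. Take seven LR-rows on $100$ columns whose (L-block length, R-block length) pairs are $r_1=(26,29)$, $r_2=(41,26)$, $f=(31,31)$, $h=(36,16)$, $g=(21,21)$, $s_2=(24,11)$, $s_3=(22,23)$. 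The two chains of Remark~\ref{rem:A_LR_2nested} are $g\subset s_3\subset r_1\subset f$ and $s_2\subset h\subset r_2$; the only overlapping pairs are $\{r_1,r_2\}$, $\{r_2,f\}$, $\{f,h\}$, $\{h,g\}$, $\{g,s_2\}$, $\{s_2,s_3\}$, $\{r_1,h\}$, $\{h,s_3\}$, so every three rows contain a nested pair, $A$ is admissible and partially $2$-nested, and the natural column order is a suitable LR-ordering. Yet in $H(A+)$ the sequence $L(r_1), L(r_2), L(f), R(f), R(h), R(g), L(g), L(s_2), L(s_3)$ is a chordless induced path: consecutive vertices are adjacent because the rows overlap or are the two blocks of one row, while every non-consecutive pair is non-adjacent, either because the rows are nested or because the two vertices carry different letters and come from different rows (which disposes of the pairs arising from the overlaps $\{r_1,h\}$ and $\{h,s_3\}$). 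That is a run of \emph{nine} consecutive uncolored labeled vertices which satisfies every structural conclusion you derive: distinct pivots $f$ and $g$, pivots nested, bipartite overlap structure, no $D_{11}$, $D_{12}$, $D_{13}$, and no chord.

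Consequently the case analysis you plan cannot close: with a middle segment of exactly three vertices (nested pivots $f,g$ and a row $h$ overlapping both), there is no antichain and no chord to find, and your fallback appeal to the families $\mathcal{S}$, $\mathcal{P}$ and $\mathcal{M}$ does not help, since none of those configurations occurs in the example either; one can even append three unlabeled rows inducing $F_0$ on columns disjoint from all the LR-rows, so that $A$ is partially $2$-nested but not $2$-nested --- the exact ambient hypothesis of the theorem --- without disturbing the path. So the bound of $6$ cannot be deduced, for arbitrary paths of $H$, from the properties you isolate; any complete argument must exploit how the run sits inside the particular colored paths and odd cycles produced by Lemma~\ref{lema:2-color-extension}, where the $\mathcal{P}$- and $\mathcal{S}$-type subconfigurations genuinely arise. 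To be fair, this is also the point on which the paper's own proof is silent: it states the same two observations you start from and asserts the conclusion. But your proposal, by committing to a specific source of contradiction that does not exist, makes the gap concrete rather than closing it.
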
 

This follows from the previous claim and the fact that every pair of uncolored labeled vertices labeled with distinct rows are adjacent only if they correspond to the same LR-row in $A$. \QED

\vspace{2mm}
If $A$ is not $2$-nested, then the partial $2$-coloring given for $H$ cannot be extended to a total proper $2$-coloring of the vertices. Notice that the only pre-colored vertices are those labeled with either L or R, and those LR vertices corresponding to an empty row, which we are no longer considering when defining $A+$. %esto lo puse en algun ladoooo? 
According to Lemma~\ref{lema:2-color-extension} we have 5 possible cases. %\textbf{detallar?}.

\begin{mycases}
	\vspace{1mm} %\textbf{Case 2.1.}
	\case  \textit{There is an odd  induced path $P =  v_1, v_2, \ldots, v_k $ such that the only colored vertices are $v_1$ and $v_k$, and they are colored with the same color.
}	%\todo{OJO, es odd}

%Since $A$ is an enriched matrix, the only pre-colored vertices of $P$ may correspond to rows labeled with either L or R. 
We assume without loss of generality throughout the proof that $v_1$ is labeled with L, since it is analogous otherwise by symmetry. Recall that an odd path is induced by an even number of vertices.

If $v_2, \ldots, v_{k-1}$ are unlabeled rows, then we find either $S_2(k)$ or $S_3(k)$ as a subconfiguration which is not possible since $A$ is admissible.  	
%%%%%%%%%%%%%%%%%
%											%
%     hay al menos un LR:		%
%											%
%%%%%%%%%%%%%%%%% 
Suppose there is at least one LR-vertex in $P$.	Recall that, an LR-vertex and a non-LR-vertex are adjcent in $H$ only if the rows in $A+$ are both labeled with the same letter and the LR-row is properly contained in the non-LR-row. %Also notice that, in this case, there are no uncolored labeled vertices by definition of enriched matrix.

Suppose that every LR-vertex in $P$ is nonadjacent with each other. Let $v_i$ be the first LR-vertex in $P$, and suppose first that $i = 2$. Since $v_2$ is an LR-vertex and is adjacent to $v_1$, then $v_2$ is labeled with L and $v_2 \subsetneq v_1$. Hence, since we are assuming there are no adjacent LR-vertices in $P$ and $k \geq 4$, then $v_3$ is not an LR-vertex, thus it is unlabeled because we are considering a suitable LR-ordering to define $A+$. Let $v_3, \ldots, v_j$ be the maximal sequence of consecutive unlabeled vertices in $P$ that starts in $v_3$. Thus, $v_l \subseteq v_1$ for every $3 \leq l \leq j$. 
	
	Notice that there are no other LR-vertices in $P$: toward a contradiction, let $v_j$ be the next LR-vertex in $P$. If $v_j$ is labeled with L, then $v_j$ is adjacent to $v_1$ since $v_3$ is nested in $v_1$, which is not possible. It is analogous if $v_j$ is labeled with R. Thus, $v_l$ is unlabeled for every $3 \leq l \leq k-1$.
	Moreover, the vertex $v_k$ is labeled with L, for if not we find $D_1$ in $A$ induced by $v_1$ and $v_k$ and this is not possible since $A$ is admissible. However, in that case we find $S_5(k)$ as a subconfiguration.
	
	Hence, if $v_i$ is an isolated LR-vertex (this is, nonadjacent to other LR-vertices), then $i >2$. It follows that $v_2$ is an unlabeled vertex.
	Notice that a similar argument as in the previous paragraph proves that there are no more LR-vertices in $P$: since $v_{i+1}$ is nested in $v_{i-1}$, it follows that any other LR-vertex is adjacent to $v_{i-1}$. 
	Suppose first that $v_i$ is labeled with L and let $v_2, \ldots, v_{i-1}$ be the maximal sequence of unlabeled vertices in $P$ that starts in $v_2$.
	
	 Since $v_i$ is the only LR-vertex in $P$, if $v_k$ is labeled with L, then necessarily $i=k-1$ for if not $v_k$ is adjacent to $v_{i-1}$. However, since in that case $v_k \supsetneq v_{k-1}= v_i$ and $v_k$ is nonadjacent to every other vertex in $P$, then we find $S_5(k)$ as a subconfiguration. Analogously, if $v_k$ is labeled with R, since $v_j \subseteq v_{i-1}$ for every $j > i$, then $v_k$ is adjacent to $v_{i-1}$ which leads to a contradiction. 
	
	Suppose now that $v_i$ is labeled with R and remember that $i>2$. Furthermore, $v_j$ is unlabeled for every $j > i$. Moreover, $v_j$ is nested in $v_{i-1}$ for every $j>i$, for if not $v_k$ would be adjacent to $v_i$. However, in that case $v_k$ is adjacent to $v_{i-1}$, whether labeled with R or L, and this results in a contradiction. 
	
	Notice that we have also proven that, when considering an admissible matrix and a suitable LR-ordering to define $H$, there cannot be an isolated LR-vertex in such a path $P$, disregarding of the parity of the length of $P$. This last part follows from the previous discussion %\todo{previous+algo} 
	and the fact that, if the length is 2 %\todo{length is 2?} 
	and $P$ has one LR-vertex, then 
	%\todo{since despu\'es del then?} 
	we find $D_4$ as a subconfiguration if the endpoints are labeled with the same letter and we find $D_5$ if the endpoints are labeled one with L and the other with R, since the endpoints are colored with distinct colors. Moreover, the ordering would not be suitable, which is a necessary condition for $A+$ and thus $H$ to be well-defined. If the length of $P$ is even and greater than 2, then the arguments are analogous as in the odd case.
	The following claim is a straightforward consequence of the previous discussion. %\todo{previous+algo}.
	
\begin{claim}   \label{claim:3_teo2nested}
If there is an isolated LR-vertex in $P$, then it is the only LR-vertex in $P$. Moreover, there are no two nonadjacent LR-vertices in $P$. Equivalently, every LR-vertex in $P$ lies in a sequence of consecutive LR-vertices.
\end{claim}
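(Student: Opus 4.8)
The plan is to read the three assertions straight off the case analysis performed for $P$ in the paragraphs above, since no genuinely new configuration has to be examined. I keep the standing hypotheses that $A$ is admissible, that $A+$ is built from a suitable LR-ordering $\Pi$, and that $P=v_1,\dots,v_k$ is an \emph{induced} path of $H=H(A+)$, so that two of its vertices are adjacent in $H$ precisely when they occupy consecutive positions, and the only coloured vertices are the endpoints $v_1$ and $v_k$.

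First I would settle the implication that an isolated LR-vertex is the only LR-vertex of $P$. Let $v_i$ be isolated, that is, nonadjacent in $H$ to every other LR-vertex. Then its path-neighbours $v_{i-1}$ and $v_{i+1}$ are non-LR-vertices, for an LR-neighbour would be adjacent to $v_i$. As established in the discussion, suitability of $\Pi$ together with the containment that an LR--non-LR adjacency forces gives that $v_{i+1}$ is nested in $v_{i-1}$. I would then note that any other LR-vertex $v_m$ is, through this nesting, neither disjoint from nor contained in $v_{i-1}$, hence adjacent to $v_{i-1}$ in $H$; since $v_{i-1}$ and $v_m$ are not in consecutive positions of $P$, this contradicts $P$ being induced. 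Thus $v_i$ is the unique LR-vertex, which is the first assertion.

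To obtain the remaining two assertions I would combine this with the stronger fact proved just above, namely that a lone isolated LR-vertex cannot occur at all: running through the possible labels of the endpoints and of $v_i$, each configuration produces $S_5(k)$ (or, when $P$ has length two, $D_4$ or $D_5$) as a subconfiguration, against admissibility and the choice of pre-coloring. Hence no LR-vertex of $P$ is isolated, so every LR-vertex has an LR-neighbour and the LR-vertices group into consecutive runs; this is exactly the statement that there are no two nonadjacent LR-vertices, and equivalently that every LR-vertex lies in a sequence of consecutive LR-vertices.

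I expect the only delicate point to be the nesting step ``$v_{i+1}$ is nested in $v_{i-1}$'' and its consequence that every other LR-vertex is forced adjacent to $v_{i-1}$; both rest on suitability of $\Pi$ and on the absence of $D_1$, $D_4$, $D_5$ and $S_5(k)$, and they constitute the real work of the analysis already carried out. Granting them, the claim is pure bookkeeping, which is why it is a straightforward consequence of the preceding discussion.
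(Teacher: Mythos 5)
Your treatment of the first assertion does follow the paper's own route: around an isolated LR-vertex $v_i$ both path-neighbours are non-LR, the unlabeled vertices following $v_i$ are nested in $v_{i-1}$ (resp.\ in $v_1$ when $i=2$), and any further LR-vertex would then be adjacent in $H$ to a vertex of $P$ with which it is not consecutive, contradicting that $P$ is induced; combining this with the contradictions via $S_5(k)$ (and $D_4$, $D_5$ for short paths) rules isolated LR-vertices out altogether. Those are exactly the ingredients of the discussion preceding the claim, so deferring to them is legitimate.

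The genuine gap is in your final step. From ``no LR-vertex of $P$ is isolated'' you conclude that ``there are no two nonadjacent LR-vertices'', asserting these are the same statement; they are not. A path whose LR-vertices form \emph{two} disjoint maximal runs, each of length at least two (say $v_2,v_3$ are LR-vertices, $v_4,v_5$ are unlabeled, and $v_6,v_7$ are LR-vertices), has no isolated LR-vertex, yet $v_3$ and $v_6$ are nonadjacent LR-vertices. The second assertion is precisely what is invoked immediately after the claim to conclude that $P$ contains \emph{one and only one} maximal LR-subpath, so this configuration must actually be excluded, and your argument never excludes it. What does exclude it is the same forced-adjacency mechanism you used for the first assertion, applied to any two LR-vertices separated only by unlabeled vertices, regardless of isolation: in the paper's argument ruling out a ``next LR-vertex $v_j$'' after an unlabeled stretch, the only hypotheses used are that the intermediate vertices are unlabeled and suitably nested, whence $v_j$ becomes adjacent in $H$ to $v_1$ or $v_{i-1}$, against $P$ being induced. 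Running that argument from the last vertex of a maximal LR-run to the first LR-vertex after it closes the gap; replacing it by the purely combinatorial inference ``non-isolated $\Rightarrow$ no nonadjacent pair'' does not.
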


%We say a subpath $Q$ of $P$ is an \emph{LR-subpath} if every vertex in $Q$ is an LR-vertex. We say an LR-subpath $Q$ in $P$ is \emph{maximal} if $Q$ is not properly contained in any other LR-subpath of $P$.

%We say that two LR-vertices $v_i$ and $v_j$ are \emph{consecutive }in the path $P$ (resp.\ in the cycle $C$) if either $j=i+1$ or $v_l$ is unlabeled for every $l=i+1, \ldots, j-1$. 
 	
\vspace{1mm} 	
It follows from Claims~\ref{claim:1_teo2nested},~\ref{claim:2_teo2nested} and~\ref{claim:3_teo2nested} that there is one and only one maximal LR-subpath in $P$. Thus, we have one subcase for each possible length of such maximal LR-subpath of $P$, which may be any integer between $1$ and $5$, both inclusive. %\todo{a partir de aca tengo que mirar con cuidado qué va en caso par y qué va en caso impar}
%aca decia length 2 a 6, que sería: 2 vertices, 3, 4,...6 vertices. O sea que length es 1 a 5 en realidad
	
	\subcase Let $v_i$ and $v_{i+1}$ be the two adjacent LR-vertices that induce the maximal LR-subpath. Suppose first that both are labeled with L and that $i=2$. Since $v_2$ is an LR-vertex, $v_2$ is nested in $v_1$ and $v_3$ contains $v_1$.
	Moreover, $v_4$ is labeled with R, for if not $v_4$ is also adjacent to $v_2$. This implies that the R-block of the LR-row corresponding to $v_2$ contains $v_4$ in $A$, for if not we find $D_6$ as a subconfiguration. However, either the R-block of $v_2$ intersects the L-block of $v_3$ --which is not possible since we are considering a suitable LR-ordering--, or $v_3$ is disjoint with $v_4$ since the LR-rows corresponding to $v_2$ and $v_3$ are nested, and thus we find $D_6$ as a subconfiguration. Hence, $k>4$. 
	
	By Claim~\ref{claim:3_teo2nested} and since there is no other LR-vertex in the maximal LR-subpath, there are no other LR-vertices in $P$. Equivalently, $v_4, \ldots, v_{k-1}$ are unlabeled vertices. Moreover, this sequence of unlabeled vertices is chained to the right, since if it was chained to the left, then every left endpoint of $v_j$ for $j=4, \ldots, k-1$ would be greater than $r(v_1)$ and thus $v_k$ results adjacent to $v_2$. Hence, we find $P_0(k-1,0)$ in $A$ as a subconfiguration induced by $v_1$, $v_2$, $v_4, \ldots, v_k$%\todo{estaba induced para esto}, 
	which is not possible since $A$ is admissible. 
	The proof is analogous if $i>2$, with the difference that we find $P_0(k-1,i)$ in $A$ as a subconfiguration.
	Furthermore, the proof is analogous if $v_i$ and $v_{i+1}$ are labeled with distinct letters.
	
\vspace{1mm}

\subcase Let $Q= < v_i, v_{i+1},v_{i+2}>$ be the maximal LR-subpath of $P$. Suppose first that not every vertex in $Q$ is labeled with the same letter. 

If $v_i$ is labeled with R, then %\todo{since despu\'es del then?} 
$v_{i+1}$ is labeled with R, since there is a sequence of unlabeled vertices between $v_1$ and $v_i$. This follows from the fact that if not, $v_{i+1}$ would be adjacent to either $v_1$ or some vertex in the unlabeled chain. Moreover, it follows analogously that, if $v_i$ is labeled with R, then $v_{i+2}$ is also labeled with R. %\todo{previous situation?}. 
Since we are assuming that not every vertex in $Q$ is labeled with the same letter, $v_{i}$ is labeled with L and we have the following claim. %..... aca pasa que vi tiene que estar marcada con L, pues si esta con R me queda que, o vi+1 o i+2 son ady a v1 o son adyacentes a alguno de la cadena de unlabeled entre v1 y vi.

\begin{claim} \label{claim:1_case2-1_teo2nested}
For every maximal LR-subpath of $P$, the first vertex is labeled with L.
\end{claim}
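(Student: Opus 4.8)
The plan is to argue by contradiction, using that by Claims~\ref{claim:1_teo2nested},~\ref{claim:2_teo2nested} and~\ref{claim:3_teo2nested} the path $P$ contains a single maximal LR-subpath, call it $Q$, whose first vertex $v_i$ is preceded (when $i>2$) by a chain of unlabeled vertices $v_2,\dots,v_{i-1}$ joining it to the colored L-row $v_1$; here the internal non-LR vertices are unlabeled because $\Pi$ is suitable and $A$ has no uncolored L- or R-rows. I would fix the suitable LR-ordering $\Pi$ used to build $A+$ (which exists by Lemma~\ref{lema:hay_suitable_ordering}), so that every L-labeled row or block occupies an initial segment $[1,\cdot]$ of columns and every R-labeled one a final segment $[\cdot,N]$. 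Suppose, toward a contradiction, that $v_i$ is labeled R, so that $v_i$ is a suffix block $[b,N]$ of some LR-row $f$.

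First I would establish the propagation step: the next LR-vertex $v_{i+1}$ of $Q$ must again be labeled R. Indeed, an LR-vertex adjacent to the R-block $v_i$ and labeled L can only be the L-block of the very same row $f$ (distinct letters force non-adjacency unless the two vertices come from one LR-row), so $v_i$'s row must be split and, by suitability of $\Pi$, its L-block is an initial segment $[1,c]$ disjoint from $[b,N]$, i.e.\ $c<b$ and $b>1$. Since for $i>2$ the induced path forbids $v_i\sim v_1$, the suffix $[b,N]$ must start strictly to the right of $v_1=[1,a_1]$, i.e.\ $b>a_1$; the initial segment $[1,c]$ with $c<b$ then either overlaps $v_1$ without containing it (when $c<a_1$, giving $v_{i+1}\sim v_1$) or, when $a_1\le c<b$, overlaps-but-does-not-contain an intermediate vertex of the unlabeled chain that $P$ uses to cross the gap $(a_1,b)$ (giving an edge from $v_{i+1}$ into the chain). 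Either outcome is a chord between non-consecutive vertices of $P$, contradicting that $P$ is induced. For $i=2$ the same computation directly yields $v_3=v_{i+1}\sim v_1$. Iterating this step along $Q$ forces every vertex of $Q$ to be labeled R.

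Finally I would derive the contradiction. In the present subcase, where $Q=\langle v_i,v_{i+1},v_{i+2}\rangle$ is assumed not to be monochromatically labeled, the all-R conclusion already contradicts the hypothesis, so $v_i$ must be labeled L. For the statement in full generality I would read the all-R maximal LR-subpath together with the colored L-row $v_1$ and the two unlabeled chains flanking it, and observe that this configuration exhibits one of the forbidden subconfigurations of $\mathcal{S}$ or $\mathcal{P}$ (concretely an $S_5(k)$- or $P_0(k,0)$-type submatrix, exactly as produced in Subcase~1 and Subcase~2.1), contradicting the admissibility of $A$. Hence $v_i$ cannot be labeled R, and the first vertex of $Q$ is labeled L.

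The main obstacle I expect is the geometric bookkeeping in the propagation step: precisely locating the initial segment $[1,c]$ relative to $v_1$ and to each unlabeled vertex of the chain, and confirming that in every admissible position it creates an edge to a non-consecutive vertex of $P$. This rests delicately on the suitability of $\Pi$ (to force $c<b$) and on the fact that the chain genuinely walks from the left-anchored $v_1$ rightward to reach the suffix block $v_i$; making the phrase \emph{``$v_{i+1}$ would be adjacent to either $v_1$ or some vertex in the unlabeled chain''} fully rigorous is the crux, whereas the remaining reductions are routine applications of the earlier characterizations of admissibility and suitability.
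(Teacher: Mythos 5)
Your proposal is correct and takes essentially the same route as the paper: its core is the identical propagation step — if the first LR-vertex of $Q$ were labeled R, then any subsequent L-labeled LR-vertex would have to be the L-block of the same LR-row and, by suitability of $\Pi$, would be adjacent to $v_1$ or to some vertex of the unlabeled chain, a chord contradicting that $P$ is induced — so the R label propagates along $Q$ and contradicts the non-monochromaticity assumption under which the claim is stated. The only divergence is in the all-R fallback, which the paper also resolves after stating the claim but via a $D_8$ subconfiguration (a matrix of $\mathcal{D}$) and chord arguments involving $v_k$, rather than the $S_5(k)$/$P_0(k,0)$-type configurations you name; this is a minor misidentification of which forbidden family arises, not a gap in the method.
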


Suppose $v_i$ and $v_{i+1}$ are both labeled with L and $v_{i+2}$ is labeled with R. Notice that, if $i=2$, then $v_2$ is labeled with L, $v_4$ is labeled with R and $v_3$ may be labeled with either L or R.

Since $v_{i+1}$ and $v_{i+2}$ are labeled with distinct letters, they correspond to the same LR-row in $A$. Notice that $v_i$ is contained in $v_{i+1}$. Thus, since $v_i$ and $v_{i+1}$ are adjacent, the R-block corresponding to $v_i$ in $A$ contains $v_{i+2}$. Therefore, we find $P_0(k,i)$ or $P_1(k,i)$ as a subconfiguration of the submatrix induced by $P$. %\todo{estaba `induced' para esto}.

If instead $v_{i+1}$ and $v_{i+2}$ are both labeled with R, then $v_i$ and $v_{i+1}$ are the two blocks of the same LR-row in $A$. Hence, since $v_{i+1}$ and $v_{i+2}$ are adjacent and $v_k$ is nonadjacent to $v_{i+1}$, then $v_{i+1}$ contains $v_{i+2}$ and thus the L-block of the LR-row corresponding to $v_{i+2}$ contains $v_i$. Once again, we find either $P_0(k,i)$ or $P_1(k,i)$ as a subconfiguration in $A$.

\vspace{1mm}
Suppose now that all vertices in $Q$ are labeled with the same letter and suppose first that $i=2$. Since $v_1$ and $v_2$ are adjacent, then every vertex in $Q$ is labeled with L.
Notice that $k>4$ since $v_5$ is uncolored and the endpoints of $P$ are colored with the same color.
Since $v_2$ is adjacent to $v_1$, then $v_1 \subsetneq v_3$ and $v_4 \subsetneq v_3$.
Since $k$ is even and $k>4$, then $v_5$ is an unlabeled vertex. Moreover, for every unlabeled vertex $v_j$ such that $j >4$, $l(v_j) > r(v_1)$ and $r(v_j) \leq r(v_3)$, for if not $v_j$ and $v_3$ would be adjacent. However, $v_k$ is not labeled with L for in that case it would be adjacent to $v_3$. Furthermore, if $v_k$ is labeled with R, then we find $D_8$ as a subconfiguration, which is not possible since we assumed $A$ to be admissible.

Suppose now that $i>2$. In this case, there is a sequence of unlabeled vertices between $v_1$ and $v_i$. 
If every vertex in $Q$ is labeled with L, since $v_1$ and $v_i$ are nonadjacent (and thus $v_1$ is nested in $v_i$) and $v_{i+1}$ is nonadjacent with $v_{i-1}$, then $v_i \subsetneq v_{i+1}$, $v_{i+2} \subsetneq v_{i+1}$. It follows that $v_j$ is contained between $r(v_i)$ and $r(v_{i+1})$ for every $j>i+2$ and therefore $v_k$ is adjacent either to $v_{i+1}$ or $v_i$, which results in a contradiction. 

If every vertex in $Q$ is labeled with R, then $v_{i+1} \subsetneq v_{i}$ and $v_{i+1} \subsetneq v_{i+2}$ for if not $v_{i+1}$ would be adjacent to $v_{i-1}$ and $v_{i+2}$. Hence, if $i+2=k-1$, then $v_k$ would be adjacent also to $v_{i+1}$. Hence, there is at least one unlabeled vertex $v_j$ with $j>i+2$. Moreover, for every such vertex $v_j$, it holds that $l(v_j) < l(v_i)$ and $r(v_j) > l(v_{i+1})$. Hence, if $v_k$ is labeled with R, then $v_k$ is adjacent to $v_{i+1}$. If instead $v_k$ is labeled with L, then we find $D_8$ as a subconfiguration of $A$ induced by $v_k$ and the LR-rows corresponding to $v_{i}$ and $v_{i+1}$. %\todo{estaba `induced' para esto}.

\subcase Let $Q= <v_i, v_{i+1}, v_{i+2}, v_{i+3}>$ be the maximal LR-subpath of $P$. Notice that either 2 vertices %\todo{vertices? (varias veces)} 
are labeled with L and 2 vertices are labeled with R, or $1$ vertex is labeled with L and 3 vertices are labeled with R, or viceversa. Moreover, by Claim~\ref{claim:1_case2-1_teo2nested} we know that $v_i$ is labeled with L. Every vertex $v_j$ such that $1<j<i$ or $i+3<j<k$ is an unlabeled vertex. 

Suppose first that $v_i$ is the only vertex in $Q$ labeled with L. Thus, $v_{i+1}$ is the R-block of the LR-row in $A$ corresponding to $v_i$. Hence, either $v_{i+1} \subsetneq v_{i+2}$ or vice versa.
Notice that there is at least one unlabeled vertex $v_j$ between $v_{i+3}$ and $v_k$, for if not $v_k$ is adjacent to $v_{i+1}$ or $v_{i+2}$. Moreover, either $v_j$ is contained in $v_{i+2} \setminus v_{i+3}$ or in $v_{i+3} \setminus v_{i+2}$ for every $j>i+4$. In any case, $v_k$ results adjacent to either $v_{i+2}$ or $v_{i+3}$, which results in a contradiction. 

Hence, at least $v_i$ and $v_{i+1}$ are labeled with L. Suppose that $v_{i+2}$ is labeled with R --and thus $v_{i+3}$ is labeled with R. Notice that, if $v_{i+3} \supsetneq v_{i+2}$, then there is no possible label for $v_k$ for, if $v_k$ is labeled with R, then $v_k$ is adjacent to $v_{i+2}$, whereas if $v_k$ is labeled with L, then $v_k$ is adjacent to $v_i$ and $v_{i+1}$.
However, the same holds if $v_{i+2} \supsetneq v_{i+3}$ because there is at least one unlabeled vertex $v_j$ with $j>i+3$ and thus for every such vertex holds $l(v_j)>l(v_{i+2})$ and therefore this case is not possible.

%Furthermore, $v_i$ must be nested in $v_{i+1}$, for if not either $v_{i+1}$ results adjacent to $v_i$ or to an unlabeled vertex in the chain between $v_1$ and $v_i$. 

Finally, suppose that $v_i$, $v_{i+1}$ and $v_{i+2}$ are labeled with L and thus $v_{i+3}$ is labeled with R. Thus, $v_k$ is labeled with R and is nested in $v_{i+3}$. Moreover, there is a chain of unlabeled vertices $v_j$ between $v_{i+3}$ and $v_k$ such that $v_j$ is nested in $v_{i+3}$ for every $j>i+4$. %$l(v_j) > l(v_{i+1})$ and $r(v_j) > l(v_{i+3})$.
Furthermore, $v_i \subsetneq v_{i+1}$ and $v_i \subseteq v_{i+2} \subsetneq v_{i+1}$: if $i=2$, then $v_2 \subsetneq v_1$ and, since $v_3$ and $v_4$ are nonadjacent to $v_1$, $v_3, v_4 \supseteq v_1$. If instead $i>2$, then for every unlabeled vertex $v_j$ between $v_1$ and $v_i$, $r(v_j) < r (v_i)$, except for $j=i-1$ for which holds $r(v_{i-1}) > r(v_i)$. Hence, since $v_{i+1}$ and $v_{i+2}$ are nonadjacent to every such vertex, $v_j \subset v_{i+1}, v_{i+2}$ for $1<j<i$. We find $P_0(k-3,i)$ in $A$ as a subconfiguration %(o una version mas chica de esta matriz....) 
since the R-block corresponding to $v_i$ is contained in $v_{i+3}$ and thus the R-block intersects the chain of vertices between $v_{i+3}$ and $v_k$. 
%$M_I(j)$ in $A^*_\tagg$.

We have the following as a consequence of the previous arguments.
\begin{claim} \label{claim:2_case2-1_teo2nested}
Let $v_i$ and $v_{i+1}$ be the first LR-vertices that appear in $P$. If $v_{i+1}$ is also labeled with L, then $v_i \subsetneq v_{i+1}$. Moreover, if $v_{i+2}$ is also an LR-vertex that is labeled with L, then $v_{i+2} \subsetneq v_{i+1}$.
\end{claim}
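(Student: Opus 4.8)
The plan is to reduce both containments to comparisons of the right endpoints of $L$-blocks, and then pin down the two directions separately: the direction of $v_i\subsetneq v_{i+1}$ from the left endpoint $v_1$ of $P$ together with inducedness, and the direction of $v_{i+2}\subsetneq v_{i+1}$ from the block geometry forced by suitability. First I would invoke Claim~\ref{claim:1_case2-1_teo2nested} to conclude that $v_i$, being the first vertex of the (unique, by Claim~\ref{claim:3_teo2nested}) maximal LR-subpath of $P$, is labeled with L; hence $v_i$ and $v_{i+1}$ are the $L$-blocks of two distinct LR-rows $f_i\neq f_{i+1}$ of $A$. Since the LR-ordering used to build $A+$ is suitable, every row labeled L in $A+$ is a block of consecutive $1$'s starting in the first column, so any two such rows are nested and, writing $\alpha(f)$ for the last column of the $L$-block of $f$, containment of $L$-blocks is equivalent to comparison of the $\alpha$'s. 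Thus it suffices to prove $\alpha(f_i)<\alpha(f_{i+1})$ and, when applicable, $\alpha(f_{i+2})<\alpha(f_{i+1})$.

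For the first inequality I would use that $P$ is induced together with the leftmost vertex $v_1$, which is a colored, non-LR row labeled L. If $i=2$, the edge $v_1v_2$ forces (by the adjacency rule between the LR-vertex $v_2$ and the non-LR vertex $v_1$) that $v_1\not\subseteq v_2$ and that they are non-disjoint; as both start in the first column this gives $v_2\subsetneq v_1$, while the non-edge $v_1v_3$ gives $v_1\subseteq v_3$, and chaining yields $v_2\subsetneq v_3$. If $i\geq 3$, then $v_{i-1}$ is an unlabeled non-LR vertex and $v_1$ is non-adjacent to $v_{i+1}$, so (they share the first column, hence are non-disjoint) $v_1\subseteq v_{i+1}$; propagating along the chain of pairwise overlapping unlabeled vertices $v_2,\dots,v_{i-1}$ (consecutive ones induce a gem, hence overlap, and each meets $v_{i+1}$ once the previous one does) shows $v_{i-1}\subseteq v_{i+1}$, whence $r(v_{i-1})\leq\alpha(f_{i+1})$; finally the edge $v_{i-1}v_i$ forces $v_{i-1}\not\subseteq v_i$, i.e.\ $r(v_{i-1})>\alpha(f_i)$, and combining gives $\alpha(f_i)<\alpha(f_{i+1})$.

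For the second inequality I would argue purely from the block structure. Since $P$ is induced, $v_i$ and $v_{i+2}$ are non-adjacent; being $L$-blocks of two LR-rows they are non-disjoint, so the non-adjacency rule forces $f_i$ and $f_{i+2}$ to be nested, while the edges $v_iv_{i+1}$ and $v_{i+1}v_{i+2}$ force $f_{i+1}$ to overlap both $f_i$ and $f_{i+2}$. By Remark~\ref{obs:props_de_A+_suitable}, suitability makes overlapping LR-rows cross, so that $f\mapsto\alpha(f)$ and $f\mapsto\beta(f)$ (the first column of the $R$-block) move in the same direction on an overlapping pair; from the first part together with the overlap of $f_i,f_{i+1}$ I get $\alpha(f_i)<\alpha(f_{i+1})$ and $\beta(f_i)<\beta(f_{i+1})$. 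I then split on the nesting of $f_i,f_{i+2}$: if $f_{i+2}\subseteq f_i$ then $\alpha(f_{i+2})\leq\alpha(f_i)<\alpha(f_{i+1})$; if $f_i\subseteq f_{i+2}$ then $\beta(f_{i+2})\leq\beta(f_i)<\beta(f_{i+1})$, and the overlap of $f_{i+1},f_{i+2}$ then forces $\alpha(f_{i+2})<\alpha(f_{i+1})$. Either way $v_{i+2}\subsetneq v_{i+1}$.

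The main obstacle is the first inequality: the crossing direction of the two overlapping $L$-blocks $v_i,v_{i+1}$ is not determined by the pair itself, so the argument must import orientation from outside the LR-subpath. The delicate technical point there is the propagation of non-disjointness along the unlabeled chain $v_2,\dots,v_{i-1}$ (needed to rule out the ``disjoint'' alternative in each non-adjacency), which rests on consecutive unlabeled vertices overlapping and on the anchoring $v_1\subseteq v_{i+1}$. I would also double-check strictness throughout, using that $f_i,f_{i+1}$ (resp.\ the relevant rows) are distinct LR-rows so that the inclusions of $L$-blocks are proper.
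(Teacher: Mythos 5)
You follow essentially the same route as the paper. The paper gives no standalone proof of this claim (it is introduced only as ``a consequence of the previous arguments''), and those preceding arguments are exactly yours: the first containment is obtained by anchoring outside the LR-subpath, at $v_1$ or at the last unlabeled vertex $v_{i-1}$ of the chain preceding $v_i$, playing the edge $v_{i-1}v_i$ against the non-edges between $v_1,\dots,v_{i-1}$ and $v_{i+1}$ on the induced path; the second is obtained from the nestedness of the LR-rows $f_i$ and $f_{i+2}$ (forced by the non-edge $v_iv_{i+2}$ together with non-disjointness) against the overlaps forced by the two edges, using the crossing behaviour of overlapping LR-rows under a suitable LR-ordering. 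Your write-up is in fact more explicit and self-contained than the paper's.

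There is, however, one step that fails as written and needs patching. In the second part you use $\beta(f_i)<\beta(f_{i+1})$ and $\beta(f_{i+2})\le\beta(f_i)$, which tacitly assumes that $f_i$, $f_{i+1}$, $f_{i+2}$ all have nonempty R-blocks. An LR-vertex of $H$ may equally well be the unique block of an LR-row with exactly one block (Definition~\ref{def:A+} keeps such a row as a single L- or R-labeled row of $A+$, and its vertex is still an LR-vertex), and this situation is realizable under a suitable LR-ordering: for instance, on ten columns, $f_i=\{1,2\}\cup\{8,9,10\}$, $f_{i+1}=\{1,\dots,5\}$, $f_{i+2}=\{1,2,3\}\cup\{7,\dots,10\}$ yields the two edges $v_iv_{i+1}$, $v_{i+1}v_{i+2}$ and the non-edge $v_iv_{i+2}$, yet $\beta(f_{i+1})$ is undefined. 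The conclusion survives in all such degenerate cases, but by separate (easier) arguments that your proof should record: $f_i$ cannot be one-block, since part 1 would then give $f_i\subseteq f_{i+1}$, i.e.\ nestedness, contradicting the edge $v_iv_{i+1}$; if $f_{i+1}$ is one-block, the edge $v_{i+1}v_{i+2}$ forces $f_{i+1}\not\subseteq f_{i+2}$, which under suitability reads $\alpha(f_{i+2})<\alpha(f_{i+1})$ directly; and if $f_{i+2}$ is one-block, the non-edge $v_iv_{i+2}$ forces $f_{i+2}\subseteq f_i$, hence $v_{i+2}\subseteq v_i\subsetneq v_{i+1}$. With this case analysis added your proof is complete; note that the paper's own derivation glosses over the same point.
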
 

\subcase 
Let $Q=<v_i, \ldots, v_{i+4}>$ be the maximal LR-subpath of $P$. By Claim~\ref{claim:1_case2-1_teo2nested}, $v_i$ is labeled with L. Moreover, either (1) $v_i$ and $v_{i+1}$ are labeled with L and $v_{i+2}$, $v_{i+3}$ and $v_{i+4}$ are labeled with R, or (2) $v_i$, $v_{i+1}$ and $v_{i+2}$ are labeled with L and $v_{i+3}$ and $v_{i+4}$ are labeled with R. It follows from Claim~\ref{claim:2_case2-1_teo2nested} that $v_i \subsetneq v_{i+1}$. 

Let us suppose the (1) holds. If $v_{i+3} \subsetneq v_{i+4}$, then there is at least one unlabeled vertex in $P$ between $v_{i+4}$ and $v_k$, for if not $v_k$ would be adjacent to $v_{i+2}$. Since every vertex $v_j$  for $i+5<j \leq k$ is contained in $v_{i+4} \setminus v_{i+3}$, it follows that $v_k$ is adjacent to $v_{i+2}$ and thus this is not possible. Hence, $v_{i+3} \supsetneq v_{i+4}$. 
Furthermore, $v_{i+3} \supsetneq v_{i+2}$, and, since $v_k$ is nonadjacent to $v_{i+2}$, $v_{i+2} \supsetneq v_{i+4}$. Since there is a sequence of unlabeled vertices between $v_{i+4}$ and $v_k$, we find $P_2(k,i-2)$ as a subconfiguration if $v_{i+4}$ is nested in the R-block of $v_{i+2}$, or we find $P_0(k-3,i-2)$ otherwise.

Suppose now (2) holds. By Claim~\ref{claim:2_case2-1_teo2nested}, $v_i \subsetneq v_{i+1}$ and $v_{i+2} \subsetneq v_{i+1}$. Furthermore, since $v_k$ is nonadjacent to $v_{i+3}$, it follows that $v_{i+3} \supsetneq v_{i+4}$. In this case, we find $P_2(k,i-2)$ as a subconfiguration if $v_{i+4}$ is nested in the R-block of $v_{i+2}$, or we find $P_0(k-3,i-2)$ otherwise. 

\vspace{1mm}
\subcase  %\ref{item:7_case2-1_teo2nested} %cambie length 8 a 7, y 6 a 5
Suppose by simplicity that the length of $P$ is 7 (the proof is analogous if $k>8$), and thus let $Q=<v_2, \ldots, v_7>$ be the maximal LR-subpath of $P$ of length 5 %\todo{length 5?}. 
Notice that $v_8$ is labeled with R and colored with the same color as $v_1$.
Hence, $v_2$, $v_3$ and $v_4$ are labeled with L and $v_5$, $v_6$ and $v_7$ are labeled with R. By Claim~\ref{claim:2_case2-1_teo2nested}, $v_2 \subsetneq v_3$ and $v_4 \subsetneq v_3$. It follows that $v_2 \subsetneq v_4$, since $v_1$ and $v_4$ are nonadjacent. Using an analogous argument, we see that $v_5 \subsetneq v_6$, $v_6 \supsetneq v_5, v_7$ and $v_7 \subsetneq v_5$ for if not it would be adjacent to $v_8$.
Since consecutive LR-vertices are adjacent, the LR-rows corresponding to $v_{i+3}$ and $v_{i+4}$ are not nested, and the same holds for the LR-rows in $A$ of $v_3$ and $v_2$. Since $A$ is admissible, the LR-rows of $v_6$ and $v_3$ are nested. This implies that the L-block of the LR-row corresponding to $v_6$ contains the L-block of $v_4$ and $v_2$.
Moreover, since the LR-rows of $v_7$ and $v_5$ are nested, the LR-rows of $v_6$ and $v_7$ are not and $v_7$ is contained in  $v_6$, then the L-block of $v_7$ contains the L-block of $v_6$. Hence, $v_7$ contains $v_5$ and thus $v_8$ results adjacent to $v_5$, which is a contradiction. 
	
	\vspace{2mm}
 	\case \textit{There is an even induced path $P =  < v_1, v_2, \ldots, v_k >$ such that the only colored vertices are $v_1$ and $v_k$, and they are colored with distinct colors.}
	
	Throughout the proof of the previous case we did not take under special consideration the parity of $k$, with one exception: when $k=5$ and the maximal LR-subpath has length $1$. In other words, notice that for every other case, we find the same forbidden subconfigurations of admissibility with the appropriate coloring for those colored labeled rows.

	Suppose that $k=5$, the maximal LR-subpath has length $1$, and suppose without loss of generality that $v_2$ and $v_3$ are the LR-vertices (it is analogous otherwise by symmetry). If both are labeled with L, then $v_2$ is contained in $v_3$ and thus the R-block of $v_2$ properly contains the R-block of $v_3$. Moreover, since $v_4$ is unlabeled and adjacent to $v_5$ --which should be labeled with R since the LR-ordering is suitable--, it follows that there is at least one column in which the R-block of the LR-row corresponding to $v_3$ has a $0$ and $v_5$ has a $1$. Furthermore, there exists such a column in which also the R-block of $v_2$ has a $1$. Since $v_1$ and $v_2$ are adjacent, $v_2 \subsetneq v_1$ and thus there is also a column in which $v_2$ has a $0$, $v_3$ has a $1$ and $v_1$ has a $1$. Moreover, there is a column in which $v_1$, $v_2$ and $v_3$ have a $1$ and $v_5$ and the R-blocks of $v_2$ and $v_3$ all have a $0$, and an analogous column in which $v_1$, $v_2$ and $v_3$ have a $0$ and $v_5$ and the R-blocks of $v_2$ and $v_3$ have a $1$. It follows that there is $D_{10}$ in $A$ as a subconfiguration which is not possible since $A$ is admissible. 
	If instead $v_2$ is labeled with L and $v_3$ is labeled with R, then $v_2$ and $v_3$ are the L-block and R-block of the same LR-row $r$ in $A$, respectively. We can find a column in $A$ in which $v_1$ and $r$ have a $1$ and the other rows have a $0$, a column in which only $v_1$ has a $1$, a column in which only $v_4$ has a $1$ (notice that $v_4$ is unlabeled), and a column in which $r$, $v_4$ and $v_5$ have a $1$ and $v_1$ has a $0$. It follows that there is $P_0(4,0)$ in $A$ as a subconfiguration, which results in a contradiction.

 	\vspace{2mm}
 	\case \textit{There is an induced uncolored odd cycle $C$ of length $k$.} 

	If every vertex in $C$ is unlabeled, then the proof is analogous as in Theorem~\ref{teo:case1_2nested},%\todo{no linkea}, 
	where we considered that there are no labeled vertices of any kind.
	
	Suppose there is at least one LR-vertex in $C$. Notice that there no labeled vertices in $C$ corresponding to rows in $A$ labeled with L or R, which are the only colored rows in $A+$.
		
	Suppose $k = 3$. If 2 or 3 vertices in $C$ are LR-vertices, then $A$ contains $D_7$, $D_8$, $D_9$, $D_{11}$, $D_{12}$, $D_{13}$ or $S_7(3)$ as a subconfiguration. If instead there is exactly one LR-vertex, then %\todo{since despu\'es del then?} 
	we find $F'_0$ in $A$ as a subconfiguration since every uncolored vertex corresponds either to an unlabeled row or to an LR-row.
		
	Suppose that $k\geq 5$ and let $C = v_1, v_2, \ldots, v_k $ be an uncolored odd cycle of length $k$. Suppose first that there is exactly one LR-vertex in $C$. We assume without loss of generality by symmetry that $v_1$ is such LR-vertex and that $v_1$ is labeled with L in $A+$.
	
 	Hence, either $v_j$ is nested in $v_1$, or $v_j$ is disjoint with $v_1$, for every $j=3, \ldots, k-1$.
 	If $v_j$ is nested in $v_1$ for every $j=3, \ldots, k-1$, then 
 	$l(v_k) < l(v_{k-2}) < l(v_{k-3}) < \ldots < l(v_2)<r(v_1)$ and $r(v_k) > r(v_1)$, since $v_k$ is adjacent to $v_1$ and nonadjacent to $v_j$ for every $j=3, \ldots, k-1$. Hence, we either find $F_1(k)$ or $F'_1(k)$ as a subconfiguration in $A$ induced by the columns $l(v_{k-1}), \ldots, r(v_k)$. %considering also the left tag-colum. 
 	 	
 	If instead $v_j$ is disjoint with $v_1$ for all $j=3, \ldots, k-1$, then $v_j$ is nested in $v_k$ for every $j=3, \ldots, k-2$. In this case, we find $F_2(k)$ or $F'_2(k)$ as a subconfiguration in $A$ induced by the columns $l(v_k)-1, \ldots, r(v_{k-1})$. %considering also the right tag-column.
 	
 	Now we will see what happens if there is more than one LR-vertex in $C$. First we need the following claim.
 	
\begin{claim} \label{claim:5_claim_2-nested}
	If $v$ and $w$ in $C$ are two nonadjacent consecutive LR-vertices, then one of the two paths in $C$ joining $v$ and $w$ has
	%there is one sense of the cycle for which there is 
	exactly one unlabeled vertex. % between $v$ and $w$.
\end{claim}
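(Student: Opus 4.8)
The plan is to work entirely inside the auxiliary matrix $A+$ under the fixed suitable LR-ordering $\Pi$, using the interval description of its rows. By Remark~\ref{obs:props_de_A+_suitable}, every LR-vertex labeled with L is a block of the form $[1,r]$, every LR-vertex labeled with R is a block of the form $[l,m]$ (with $m$ the last non-tag column), every unlabeled vertex is an arbitrary interval $[l',r']$, and every L-block is disjoint from every R-block. First I would rewrite the four adjacency rules of $H$ in these terms: an L-vertex $[1,r]$ and an unlabeled interval $[l',r']$ are adjacent iff $l'\le r<r'$; an R-vertex $[l,m]$ and $[l',r']$ are adjacent iff $l'<l\le r'$; and two unlabeled intervals are adjacent iff they properly overlap. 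Since $C$ is uncolored, its only vertices are LR-vertices and unlabeled vertices, so the arc along which $v$ and $w$ are consecutive has all of its interior vertices unlabeled, and it carries at least one of them because $v$ and $w$ are non-adjacent in $H$.

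Next I would record the shape of that unlabeled interior. As $C$ is induced, the unlabeled vertices $a_1,\dots,a_p$ of the consecutive arc form an induced path in the overlap graph of intervals, hence a monotone staircase: consecutive intervals overlap, non-consecutive ones are disjoint, and their endpoints increase along the chain. The translated adjacency rules force $a_1$ to straddle the relevant end of $v$ and $a_p$ the relevant end of $w$, while the fact that $C$ is induced forces $v$ to be non-adjacent to $a_2,\dots,a_p$ and $w$ to be non-adjacent to $a_1,\dots,a_{p-1}$. For each label combination of $(v,w)$ these non-adjacencies pin down on which side of the ends $r_v$ and $l_w$ each $a_i$ lies, and whether the staircase runs to the left or to the right.

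The core of the argument is then by contradiction: assume that each of the two arcs joining $v$ and $w$ carries at least two unlabeled vertices, and exhibit a forbidden subconfiguration. Because every unlabeled vertex of one arc is non-adjacent, hence nested or disjoint, with every unlabeled vertex of the other arc, the two staircases must sit in opposite orientations inside the common window bounded by $v$ and $w$; feeding the corresponding rows of $A+$, together with $v$, $w$ and the tag columns, back into the interval picture produces one of the parametrized matrices of $\mathcal{F}$ or $\mathcal{S}$ (see Figures~\ref{fig:forb_F} and~\ref{fig:forb_S}). When $v$ and $w$ are labeled with the same letter, their LR-rows cannot overlap in $A$, so I would instead use the partition of the LR-rows into two inclusion-totally-ordered families (Remark~\ref{rem:A_LR_2nested}) to land a matrix of $\mathcal{D}$ such as $D_{13}$ (see Figure~\ref{fig:forb_D}). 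In every case this contradicts the admissibility of $A$ or the fact that $A^*$ has the consecutive-ones property, so one of the two arcs must carry exactly one unlabeled vertex. The cases $(L,L)$, $(L,R)$ and $(R,R)$ are treated directly and the remaining ones follow by passing to the dual matrix $\tilde{A}$.

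The step I expect to be the main obstacle is the last one: identifying, in each label-and-orientation case, the precise forbidden matrix induced by the two staircases together with $v$ and $w$. The bookkeeping is delicate because the number of unlabeled vertices is unbounded, so the obstruction is a family such as $F_i(k)$ or $S_i(k)$ rather than a single matrix; one must also track the tag columns $c_L,c_R$ of $A^*_\tagg$ and whether each straddling interval reaches past the ends of the blocks $v$ and $w$. Getting the parity of $k$ correct is exactly what distinguishes, for instance, between an $S_0(k)$-type and an $S_8(k)$-type obstruction, and it is this parity control that ultimately forces the ``exactly one unlabeled vertex'' conclusion.
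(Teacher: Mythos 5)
Your plan has a genuine logical gap at its core: the contradiction hypothesis is not the negation of the claim. Since $v$ and $w$ are nonadjacent in $H$ but consecutive in $C$, the consecutive arc indeed carries at least one unlabeled interior vertex; but the \emph{other} arc need not carry any unlabeled vertex at all --- its interior can consist entirely of LR-vertices. The negation of ``one of the two paths has exactly one unlabeled vertex'' is therefore ``the consecutive arc has at least two unlabeled vertices \emph{and} the other arc has zero or at least two unlabeled vertices,'' whereas you only treat the case in which \emph{both} arcs carry at least two unlabeled vertices. The omitted case is not vacuous, and it is exactly where the paper does much of its work: for instance, with $v_2,v_3$ unlabeled between the nonadjacent consecutive LR-vertices $v_1$ and $v_4$, and $v_5$ an LR-vertex closing a $5$-cycle, the paper derives $F_2(5)$ (when $v_1$ and $v_4$ are both labeled L) or $S_7(4)$ (when they are labeled L and R) as forbidden subconfigurations. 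Nothing in your two-staircase picture produces these obstructions, because in this case there is no second staircase.

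A secondary weakness affects even the case you do treat: you model each arc as a single monotone staircase of unlabeled intervals, but only the consecutive arc is guaranteed to be purely unlabeled; the other arc may interleave LR-vertices with its unlabeled vertices. The paper's proof is precisely a case analysis on where such LR-vertices can sit ($v_5$, $v_6$, $v_{k-1}$, $v_k$, labeled L or R), using nestedness of their L- and R-blocks to reach either direct adjacency contradictions or forbidden configurations such as $F_2(k)$ and violations of admissibility. In addition, your claim that an induced path in the overlap graph forces non-consecutive intervals to be \emph{disjoint} is not automatic --- non-adjacent intervals may instead be nested --- so the monotone-staircase shape must itself be derived from the adjacencies with $v$ and $w$, as the paper does when it concludes $l(v_2)<l(v_3)$. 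Until the all-LR arc case and the mixed arc case are handled, the proposal does not prove the claim.
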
 	

If $k=5$, then we have to see what happens if $v_1$ and $v_4$ are such vertices and $v_5$ is an LR-vertex. We are assuming that $v_2$ and $v_3$ are unlabeled since by hypothesis $v_1$ and $v_4$ are consecutive LR-vertices in $C$. Suppose that $v_1$ and $v_4$ are labeled with L and for simplicity assume that $v_1 \subsetneq v_4$. Thus, $v_5$ is labeled with L, for if not $v_5$ can only be adjacent to $v_1$ or $v_4$ and not both. Moreover, since $v_5$ is nonadjacent to $v_2$, $v_5$ is contained in $v_1$ and $v_4$. In this case, we find $F_2(5)$ as a subconfiguration in $A$.

If instead $v_1$ is labeled with L and $v_4$ is labeled with R, then $v_5$ is the L-block of the LR-row corresponding to $v_4$. In this case, we find $S_7(4)$ as a subconfiguration of $A_\tagg$.

Let $k>5$, and suppose without loss of generality that $v_1$ and $v_4$ are such LR-vertices. Thus, by hypothesis, $v_2$ and $v_3$ are unlabeled vertices. 
Suppose first that $v_1$ and $v_4$ are labeled with L and $v_1 \subsetneq v_4$. Then $l(v_2) < l(v_3)$. 
If $v_j$ is unlabeled for every $j>4$, then $v_j$ is nested in $v_3$ and thus $v_k$ cannot be adjacent to $v_1$. 
Moreover, for every $j>4$, $v_j$ is not an LR-vertex labeled with L either. Suppose to the contrary that $v_5$ is an LR-vertex labeled with L. Since $v_5$ is adjacent to $v_4$ and the LR-rows corresponding to $v_1$ and $v_4$ are nested, then $v_5$ is also adjacent to $v_1$, which is not possible since we are assuming that $k>5$. If instead $j>5$, then %\todo{since despu\'es del then?} 
$r(v_j)>l(v_3)$ and thus it is adjacent to $v_3$, since there is a sequence of unlabeled vertices between $v_4$ and $v_j$.
By an analogous argument, we may assert that $v_j$ is not an LR-vertex for every $j>4$. The proof is analogous if $v_1 \supsetneq v_4$.

Thus, let us suppose now that $v_1$ is labeled with L and $v_4$ is labeled with R. If $v_5$ is the L-block of the LR-row corresponding to $v_4$, since $v_2$ and $v_5$ are nonadjacent, then $r(v_5)< l(v_2)$ and hence $v_5 \subsetneq v_1$. 
Moreover, $v_6$ is not an LR-vertex for in that case $v_6$ must be labeled with L and thus $v_6$ is also adjacent to $v_1$. Furthermore, since at least $v_6$ is an unlabeled vertex, then every LR-vertex $v_j$ in $C$ with $j>4$ is labeled with L, for if not $v_j$ is either adjacent to $v_4$ or nonadjacent to $v_6$ (or the maximal sequence of unlabeled vertices in $C$ that contains $v_6$).
Thus, we may assume that there no other LR-vertices in $C$, perhaps with the exception of $v_k$. 
However, if $v_k$ is an LR-vertex labeled with L, then %\todo{since despu\'es del then} 
it is also adjacent to $v_5$, since it is adjacent to $v_1$. And if $v_k$ is unlabeled, then $v_k$ is adjacent to $v_2$, $v_3$ or $v_4$ ($v_k$ must contain these vertices so that it results nonadjacent to them, but the first non-null column of the row corresponding to $v_4$ is the limit %\todo{is the limit?} 
since $v_4$ is labeled with R and thus its block ends in the last column).

Analogously, if $v_5$ is unlabeled, then $v_k$ is nonadjacent to $v_1$ since it must be contained in $v_3$. Finally, if $v_5$ is an LR-vertex labeled with R, then it is contained in $v_4$. Thus, the only possibility is that $v_{k-1}$ is an LR-vertex labeled with R and $v_7$ is the L-block of the corresponding LR-row. However, since $A$ is admissible, either $v_6$ is nested in $v_5$ or $v_6$ is nested in $v_4$. In the former case, it results also adjacent to $v_4$ and in the latter case it results nonadjacent to $v_5$, which is a contradiction. 
Notice that the arguments are analogous if the number of unlabeled vertices in the paths in both directions of the cycle is more than $2$. Therefore, the claim holds. \QED

This claim follows from the previous proof.

\begin{claim} \label{cor:5_claim_2-nested}
	If $C$ is an odd uncolored cycle in $H$, then there are at most two nonadjacent LR-vertices.
\end{claim}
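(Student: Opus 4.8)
The plan is to obtain Claim~\ref{cor:5_claim_2-nested} as a direct corollary of the case analysis already performed in the proof of Claim~\ref{claim:5_claim_2-nested}, reading the statement as asserting that $C$ has no three pairwise nonadjacent LR-vertices. Recall that two LR-vertices of $H$ are nonadjacent unless they come from the same LR-row or from two overlapping LR-rows carrying the same label; equivalently, a nonadjacent pair of LR-vertices is either labeled with different letters, or labeled with the same letter but coming from nested or disjoint rows. I would argue by contradiction, assuming $C$ contains three LR-vertices that are pairwise nonadjacent.

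First I would reduce to a \emph{consecutive} nonadjacent pair, so that the earlier analysis becomes applicable. Within a single maximal LR-subpath of $C$, consecutive LR-vertices are joined by an edge of $C$ and hence adjacent in $H$; moreover Claim~\ref{claim:1_teo2nested} bounds a same-label run of LR-vertices by length $3$, and in a run of length $3$ the two extreme vertices are the only nonadjacent pair (they are nested, exactly as shown in the proof of Claim~\ref{claim:1_teo2nested}). Combining this with the nesting structure forced by admissibility (Remark~\ref{rem:A_LR_2nested}, the two totally ordered subsets $S_1,S_2$), three pairwise nonadjacent LR-vertices cannot all lie inside one LR-subpath; consequently at least two of them are separated, on one of the two arcs of $C$, only by unlabeled vertices, i.e.\ they form a consecutive nonadjacent LR-pair $v,w$.

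Then I would invoke the conclusion established \emph{inside} the proof of Claim~\ref{claim:5_claim_2-nested}. There, for each labeling of $v$ and $w$ (both L; one L and one R; and by symmetry both R), it was shown that once $v$ and $w$ are a consecutive nonadjacent LR-pair, every further LR-vertex $v_j$ of $C$ would become adjacent to $v$ or to $w$, or else would produce one of the forbidden subconfigurations ($F_2(k)$, $S_7(k)$, $D_8$, and their relatives) or contradict the nonadjacency hypotheses. Thus $v$ and $w$ are the only LR-vertices of $C$ that are pairwise nonadjacent with respect to the whole cycle, which contradicts the assumed third LR-vertex nonadjacent to both; the claim then follows.

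The hard part will be the reduction step: making precise that a triple of pairwise nonadjacent LR-vertices always produces a consecutive nonadjacent pair to which the earlier argument applies, and verifying that that argument really does preclude \emph{every} additional nonadjacent LR-vertex uniformly across all label patterns. This amounts to re-reading each labeling subcase of Claim~\ref{claim:5_claim_2-nested} and confirming that the recurring ``no other LR-vertex'' conclusion is not an artifact of the particular small configurations treated there, but holds whenever the separating arc has the structure guaranteed by Claim~\ref{claim:5_claim_2-nested}.
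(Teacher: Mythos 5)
Your outline (find a consecutive nonadjacent LR-pair, then show no third LR-vertex can be nonadjacent to both) follows the paper's, but the step you defer to -- ``invoke the conclusion established inside the proof of Claim~\ref{claim:5_claim_2-nested}'' -- is a genuine gap, and it is precisely where the content of the paper's proof lies. The case analysis inside Claim~\ref{claim:5_claim_2-nested} is carried out under the hypothesis that the consecutive nonadjacent pair is separated by \emph{at least two} unlabeled vertices on the relevant arc; that is the configuration being refuted there. In the present claim the surviving configuration is the opposite one: by Claim~\ref{claim:5_claim_2-nested} the pair $v_1,v_3$ is separated by exactly one unlabeled vertex $v_2$, and a second application of Claim~\ref{claim:5_claim_2-nested} places the hypothetical third LR-vertex at $v_5$, with a single unlabeled vertex $v_4$ in between. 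No subcase of Claim~\ref{claim:5_claim_2-nested} addresses this configuration, so there is no ``no other LR-vertex'' conclusion available to quote. The paper closes the argument with new reasoning: if $v_1$ and $v_3$ carry distinct letters, then $v_4$ is contained in $v_2$, which forces the LR-vertex $v_5$ (labeled L or R either way) to be adjacent to $v_2$, contradicting that $C$ is induced; if all three carry the same letter, nonadjacency forces the corresponding LR-rows to be pairwise nested, with $v_5$ containing $v_1$ and $v_3$, and the containment chain on R-blocks (the R-block of $v_3$ contains that of $v_1$, and, since $v_5v_1$ is an edge of $C$, the R-block of $v_1$ contains that of $v_5$) makes the rows of $v_3$ and $v_5$ overlap, i.e.\ $v_3$ and $v_5$ adjacent, a contradiction. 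Your proposal never produces this argument, and it cannot be extracted from the proof of Claim~\ref{claim:5_claim_2-nested}.

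Your reduction step also does not follow from the facts you cite. Claim~\ref{claim:1_teo2nested} only bounds runs of LR-vertices \emph{labeled with the same letter}, while a maximal LR-subpath may consist of an L-run followed by an R-run (length up to six, by Claim~\ref{claim:2_teo2nested}). In $H$, two LR-vertices labeled with distinct letters and coming from distinct LR-rows are nonadjacent by definition; hence, for example, the first, third and fifth vertices of such a mixed subpath are pairwise nonadjacent, so a triple of pairwise nonadjacent LR-vertices \emph{can} sit inside a single maximal LR-subpath as far as Claim~\ref{claim:1_teo2nested} and Remark~\ref{rem:A_LR_2nested} are concerned. The paper sidesteps this because its argument (and its later use of the claim, in the dichotomy ``exactly two nonadjacent LR-vertices or one maximal LR-subpath of length at most six'') concerns LR-vertices separated by unlabeled vertices, i.e.\ consecutive nonadjacent pairs in the sense defined before Claim~\ref{claim:1_teo2nested}; any self-contained write-up needs to make that restriction, or handle the within-subpath triples, explicitly.
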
 
	
 	Suppose that $v_1$ and $v_i$ are consecutive nonadjacent LR-vertices, where $i>2$. It follows from Claim~\ref{claim:5_claim_2-nested} that $i=3$ or $i=k-1$. We assume the first without loss of generality, and suppose that $v_1$ is labeled with L.
 	Suppose there is at least one more LR-vertex nonadjacent to both $v_1$ and $v_3$, and let $v_j$ be the first LR-vertex that appears in $C$ after $v_3$. It follows from Claim~\ref{claim:5_claim_2-nested} that $j=5$. 	
 	If $v_1$ and and $v_3$ are labeled with distinct letters, then %\todo{since despu\'es del then?} 
 	$v_4$ is contained in $v_2$ since $v_4$ is an unlabeled vertex, and thus $v_5$ cannot be labeled with L or R for, in either case, it would be adjacent to $v_2$.
 	Thus, every LR-vertex in $C$ must be labeled with the same letter. Let us assume for simplicity that $k=5$ (the proof is analogous for every odd $k>5$) and that $v_1 \subset v_3$. Since $v_5$ is nonadjacent to $v_3$, then the corresponding LR-rows are nested. The same holds for $v_1$ and $v_3$. Moreover, $v_5$ contains both $v_1$ and $v_3$, and the R-block of $v_3$ contains the R-block of $v_1$. Furthermore, since $v_1$ and $v_5$ are adjacent, the R-block of the LR-row corresponding to $v_1$ contains the R-block of the LR-row corresponding to $v_5$ and thus the R-block of $v_3$ also contains the R-block of $v_5$, which results in $v_3$ and $v_5$ being adjacent and thus in a contradiction that arose from assuming that there were are at least three nonadjacent LR-vertices in $C$. \QED
 	
\vspace{2mm}
	We now continue with the proof of the case. 
	Notice first that, as a consequence of the previous claim and Claim~\ref{claim:2_teo2nested}, either there are exactly two nonadjacent LR-vertices in $C$ or every LR-vertex is contained in a maximal LR-subpath of length at most $6$.
	 
	\subcase Suppose there are exactly two LR-vertices in $C$ and that they are nonadjcent. Let $v_1$ and $v_3$ be such LR-vertices. Suppose without loss of generality that $v_1 \subset v_3$. Hence, every vertex that lies between $v_3$ and $v_1$ is nested in $v_2$ because they are all unlabeled vertices by assumption. 	Thus, if $v_1$ and $v_3$ are both labeled with L, then we find $F_1(k)$ as a subconfiguration in $A$ given by the columns $r(v_1), \ldots, r(v_2)$.%y c_L. 
	If instead $v_1$ is labeled with L and $v_3$ is labeled with R, then we find $F_2(k)$ as a subconfiguration in $A$ given by the same columns.

	\subcase Suppose instead that $v_1$ and $v_2$ are the only LR-vertices in $C$. If $v_1$ and $v_2$ are the L-block and R-block of the same LR-row, then we find $S_8(k-1)$ in $A$ as a subconfiguration. % Aca, puedo o bien pedir que no haya $M_I(2j)$ in $A$ o agregar la prohibida puntual en las prohibidas para admisibilidad, que seria la S8 que defini.
	If instead they are both labeled with L, then every other vertex $v_j$ in $C$ is unlabeled and $v_j$ is nested in $v_1$ or $v_2$ for every $j>3$, depending on whether $v_1 \subsetneq v_2$ or viceversa. Suppose that $v_1 \subsetneq v_2$. If there is a column in which both $v_3$ and the R-block of $v_1$ have a $1$, then we find $S_8(k-1)$ in $A$ as a subconfiguration. If there is not such a column, then we find $F_2(k)$ in $A$ as a subconfiguration.
	
 	\vspace{1mm}
	\subcase Suppose that the maximal LR-subpath $Q$ in $C$ has length $2$, %\todo{length 2?}, 
	and suppose $Q= <v_1, v_2, v_3>$.
	If $v_1$, $v_2$ and $v_3$ are labeled with the same letter, then either $v_2 \subsetneq v_1, v_3$ or $v_2 \supsetneq v_1, v_3$, and since $v_1$ and $v_3$ are nonadjacent if $k>3$, either $v_3 \subsetneq v_1$ or $v_1 \subsetneq v_3$. Suppose without loss of generality that all three LR-vertices are labeled with L, $v_2 \subsetneq v_1, v_3$ and $v_1 \subsetneq v_3$. In this case, there is a sequence of unlabeled vertices between $v_3$ and $v_1$ such that the column index of the left endpoints of the vertices decreases as the vertex path index increases. As in the previous case, if there is a column such that the R-block of $v_2$ and $v_4$ have a $1$, then we find $S_8(k-1)$ in $A$ as a subconfiguration given by the columns $r(v_1), \ldots, l(v_1)$. If instead there is not such column, then we find $F_2(k)$ as a subconfiguration of $A$ given by the same columns.
	
	If $v_1$ and $v_2$ are labeled with L and $v_3$ is labeled with R, then there is a sequence of unlabeled vertices $v_4, \ldots, v_k$ such that the column index of the left endpoints of such vertices decreases as the path index increases. Moreover, since $v_k$ is adjacent to $v_1$ and nonadjacent to $v_2$, $v_1 \supsetneq v_2$. Hence, we find $S_7(k-1)$ contained as a subconfiguration in $A$ given by the columns $r(v_1), \ldots, l(v_1)$.	 
	 	
 	\vspace{1mm}
 	\subcase Suppose that the maximal LR-subpath $Q$ in $C$ has length $3$ %\todo{length 3?} 
 	and that $Q= <v_1$, $v_2$, $v_3$, $v_4>$.
 	Suppose first that $v_1$ and $v_2$ are labeled with L and $v_3$ and $v_4$ are labeled with R. If $v_1 \subsetneq v_2$, then $v_k$ cannot be adjacent to $v_1$. 
 	Thus $v_2 \subsetneq v_1$ and $v_4 \supsetneq v_3$. Since there is a chain of unlabeled vertices and its left endpoints decrease as the cycle index increases, then we find $S_7(k-1)$ as a subconfiguration in $A$ induced by every row in $A$. %\todo{Hay dos casos que son varias oraciones cada uno, no? Delimitar: Suppose first ...  Suppose now ...}
	Suppose now that $v_1$ is labeled with L and the other three LR-vertices are labeled with R. Notice first that $v_2$ is the R-block of $v_1$, the LR-rows of $v_2$ and $v_4$ are nested and $v_3 \subsetneq v_2,v_4$. Moreover, $v_2 \subsetneq v_4$, for if not $v_k$ would not be adjacent to $v_1$. Thus, the left endpoint of the chain of unlabeled vertices between $v_4$ and $v_1$ decreases as the cycle index increases. Hence, if $k=5$, then we find $S_7(3)$ as a subconfiguration in $A$ induced by the LR-rows corresponding to $v_3$ and $v_4$ and the unlabeled row corresponding to $v_5$. Suppose that $k>5$. Since $v_3 \subsetneq v_2$ and $v_2$ is the R-block of $v_1$, then the L-block of the LR-row corresponding to $v_3$ contains both $v_1$ and the L-block of $v_4$. We find $S_7(k-3)$ in $A$ as a subconfiguration induced by the rows $v_3, v_4, \ldots, v_{k-1}$.	
 	
 	\subcase Suppose now that $Q=<v_1, \ldots, v_5>$ is the longest LR-subpath in $C$, and suppose that $v_1$ and $v_2$ are labeled with L and that the remaining rows in $Q$ are labeled with R. Since $v_1$ is adjacent to $v_k$, then $v_1 \supsetneq v_2$ and $v_5 \supsetneq v_4,v_3$. Since the LR-rows corresponding to $v_3$ and $v_5$ are nested, then $v_2$ is contained in the L-block corresponding to $v_5$, and since $v_4 \subsetneq v_5$, the R-block of $v_1$ is also contained in $v_5$. Thus, we find $S_7(k-3)$ in $A$ as a subconfiguration considering the LR-rows corresponding to $v_1$ and $v_5$ and $v_6, \ldots, v_k$.
 	The proof is analogous if $Q$ has length $6$, and thus this case is finished. 
 	
 	\vspace{1mm}
 	\case \textit{There is an induced odd cycle $C = v_1, v_2, \ldots, v_k, v_1$ with exactly one colored vertex. }
 	We assume without loss of generality that $v_1$ is the only colored vertex in the cycle $C$, and that $v_1$ is labeled with L.
 	Notice that, if there are no LR-vertices in $C$, then the proof is analogous as in the case in which there are no labeled vertices of any kind. Hence, we assume there is at least one LR-vertex in $C$.

\begin{claim} \label{claim:6_claim_2-nested}
	If there is at least one LR-vertex $v_i$ in $C$ and $i \neq 2$, then $v_i$ is the only LR-vertex in $C$.
\end{claim}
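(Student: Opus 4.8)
The plan is to work entirely in the language of column-intervals determined by the suitable LR-ordering $\Pi$ used to build $A+$, which exists by Lemma~\ref{lema:hay_suitable_ordering} because $A$ is partially $2$-nested. After $\Pi$, each L-block is an interval starting at the first non-tag column, each R-block is an interval ending at the last non-tag column, and by Remark~\ref{obs:props_de_A+_suitable} every L-block of an (former) LR-row is disjoint from every R-block. Recall that $v_1$ is the unique colored vertex and, by the standing assumption of this case, is labeled $L$, so $v_1=[1,r_1]$ is an L-block. I would first restate the four adjacency rules of $H$ geometrically: $v_1$ is adjacent to a U-vertex exactly when they intersect; two U-vertices are adjacent exactly when they overlap; an LR-vertex $u$ and a non-LR vertex $w$ are adjacent exactly when they intersect and $w\not\subseteq u$; and two LR-vertices are adjacent exactly when they arise from the same LR-row of $A$ or from two overlapping LR-rows carrying the same label (so, by Remark~\ref{rem:A_LR_2nested}, from distinct nesting-chains $S_1,S_2$).

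Next I would locate the distinguished LR-vertex $v_i$ (with $i\neq 2$) relative to $v_1$. Treating it first as an interior vertex, it is non-adjacent to $v_1$, and the LR/non-LR adjacency rule forces a clean dichotomy: if $v_i$ is an L-block then, since both intervals start at the first column and hence intersect, non-adjacency forces $v_1\subseteq v_i$; if $v_i$ is an R-block then non-adjacency forces $v_i$ to be disjoint from $v_1$ (the boundary case of an all-ones row being handled separately, and present here only because $A$ already contains the labeled row $v_1$). The R-block situation is symmetric to the L-block situation via the dual matrix $\tilde A$, so I would assume $v_i$ is an L-block with $v_1\subseteq v_i$. The endpoint case $i=k$, where $v_i$ is in fact the other neighbour $v_k$ of $v_1$, I would dispose of by the reflection symmetry of the cycle, which moves that vertex to the $v_2$-side and returns us either to the excluded index $2$ or to the interior analysis.

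Then, toward a contradiction, I would suppose a second LR-vertex $v_j$ exists and split into cases according to whether $v_i$ and $v_j$ are consecutive LR-vertices in $C$ or are separated by unlabeled vertices, and according to the label of $v_j$. If $v_i$ and $v_j$ are consecutive with the same label, adjacency forces their LR-rows to overlap, and admissibility (no $D_{13}$ together with the two-chain structure of Remark~\ref{rem:A_LR_2nested}), combined with $v_1\subseteq v_i$, pins the relative placement of the two blocks; if they carry different labels they would have to be the two blocks of a single LR-row, contradicting that $v_i,v_j$ come from distinct rows. When $v_i$ and $v_j$ are separated by unlabeled vertices, the two arcs of $C$ between them consist of U-vertices which, being non-adjacent to both $v_i$ and $v_1$, must nest inside the relevant blocks; tracing these nestings around the odd cycle, and using the inducedness of $C$ to force exactly the right non-adjacencies, reproduces one of $F_1(k)$, $F_2(k)$, $S_7(k)$ or $S_8(2j)$ (or an $F'$- or $S$-variant once the labels of $v_i$ and $v_j$ are taken into account) inside $A$ or $A_{\tagg}$, contradicting that $A$ is admissible and $\mathcal{F}$-free.

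The main obstacle I anticipate is the column bookkeeping in this last step: choosing the witnessing columns $l(\cdot)$ and $r(\cdot)$ of the blocks involved so that the extracted submatrix is \emph{exactly} a listed forbidden matrix, and correctly separating the L-block and R-block subcases for $v_j$ so that no admissible configuration slips through. A secondary subtlety is keeping the arcs short enough that only the listed finite or parametrized families can occur, for which I would invoke the bound of Claim~\ref{claim:2_teo2nested} on consecutive uncolored labeled vertices and the analogue of Claim~\ref{claim:5_claim_2-nested} for the uncolored subpaths of $C$. Once every case yields either a violation of the inducedness of $C$ or a forbidden subconfiguration, no second LR-vertex can exist, and hence $v_i$ is the unique LR-vertex in $C$.
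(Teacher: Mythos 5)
Your overall strategy (interval geometry from a suitable LR-ordering, case analysis on the labels of a hypothetical second LR-vertex and on whether the two LR-vertices are consecutive in $C$, then extraction of forbidden subconfigurations) is the same as the paper's, but one of your case-dismissals is a non sequitur and it kills the proof exactly where the real work lies. You dispose of consecutive LR-vertices carrying different labels by saying they ``would have to be the two blocks of a single LR-row, contradicting that $v_i,v_j$ come from distinct rows.'' There is no such hypothesis to contradict: the L-block and R-block of one LR-row are two \emph{distinct, adjacent} vertices of $H$, and nothing prevents both from lying on the cycle $C$. This configuration is not degenerate --- it is the case on which the paper's own proof of this claim turns: when the second LR-vertex is labeled with R, the argument forces $v_{k-1}$ and $v_k$ to be the R-block and L-block of a single LR-row, and the contradiction is then obtained by exhibiting $F_2(k)$ as a subconfiguration, not by excluding the configuration a priori. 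Your proposal never reaches this case, so no contradiction is ever produced for it.

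Two further steps fail as stated. First, your adjacency dictionary is wrong for the colored vertex: $v_1$ (an L-row) and a U-vertex $w$ are adjacent exactly when they intersect \emph{and} $w\not\subseteq v_1$ (that is the weak gem), not ``exactly when they intersect.'' Under your rule, every interior U-vertex of $C$ would be forced to be disjoint from $v_1$, which contradicts your own later step that the unlabeled arcs ``must nest inside the relevant blocks''; with the correct rule, containment in $v_1$ (or in $v_{i-1}$, $v_i$) is precisely how non-adjacency is realized, and this is what the nesting chains in the paper's argument rely on. Second, the duality reduction for ``$v_i$ is an R-block'' does not type-check: passing to $\tilde{A}$ also turns $v_1$ into an R-row, so it maps the pair ($v_1$ labeled L, $v_i$ an R-block) to ($v_1$ labeled R, $v_i$ an L-block), which is not the case you analyzed; this subcase needs its own (different) geometry, since there $v_i$ is disjoint from $v_1$ rather than containing it. Finally, even in the same-label consecutive case your sketch stops at ``admissibility pins the relative placement'' without deriving anything; the paper closes that case by a concrete inducedness violation --- the other neighbour $v_k$ of $v_1$ must be unlabeled and must meet $v_1\subseteq v_i$, hence it meets $v_i$ and cannot avoid being adjacent to it --- and some argument of this kind is still needed in your outline.
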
 

	Let $v_i$ be the LR-vertex in $C$ with the minimum index, and suppose first that $v_i$ is labeled with L. Since $i \neq 2$ and $v_1$ is a non-LR-row in $A$, $v_i \supseteq v_1$, for if not they would be adjacent. Moreover, $v_l \subset v_i$ for every $l < i-1$. 
	Toward a contradiction, let $v_j$ the first LR-vertex in $C$ with $j >i$ and suppose $v_j$ is labeled with L. Notice that the only possibility for such vertex is $j=i+1$. This follows from the fact that, if $v_{i+1}$ is unlabeled, then $v_{i+1}$ is contained in $v_{i-1}$, and the same holds for every unlabeled vertex between $v_i$ and $v_j$. Hence, if there were other LR-vertex $v_j$ labeled with L such that $j>i+1$, then it would be adjacent to $v_{i+1}$ which is not possible.		
	Then, $j= i+1$ and thus $v_j$ contains $v_l$ for every $l \leq i$. However, $v_k$ and $v_1$ are adjacent, and since $v_k$ must be an unlabeled vertex, then $v_k$ is not disjoint with $v_i$, which results in a contradiction. 
	
	Suppose that instead $v_j$ is labeled with R. Using the same argument, we see that, if $j>i+1$, then every unlabeled vertex between $v_i$ and $v_j$ is contained in $v_{i-1}$ and thus it is not possible that $v_j$ results adjacent to $v_{j-1}$ if it is unlabeled. Hence, $j=i+1$. Moreover, there must be at least one more LR-vertex labeled with R for if not, it is not possible for $v_1$ and $v_k$ to be adjacent. Thus, $v_{k-1}$ must be labeled with R and $v_k$ is the L-block of the LR-row corresponding to $v_{k-1}$. Furthermore, $v_{k-1}$ is contained in $v_1$. We find $F_2(k)$ in $A$ as a subconfiguration induced by all the rows of $A$. %si las columnas son 0,1,2,...,6 para el caso k=5, la F2 la encuentro con las columnas 4,5,1,2,3. idem para k mas grande
	Therefore, $v_i$ is the only LR-vertex in $C$. \QED

	The following is a straightforward consequence of the previous proof and the fact that, if $v_i$ is the first LR-vertex in $C$ and $i >2$, then every unlabeled vertex that follows $v_i$ is nested in $v_{i-1}$ and thus if $v_1$ is adjacent to $v_k$ then $v_k$ must be nested in $v_2$.
	
\begin{claim} \label{claim:6_claim_2-nested_2}
	If $v_i$ in $C$ is an LR-vertex and $i \neq 2$, then $i=3$.
\end{claim}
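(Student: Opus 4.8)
The plan is to argue on the column order given by the suitable LR-ordering $\Pi$ used to build $A+$, writing $l(v)$ and $r(v)$ for the first and last column carrying a $1$ in the (block corresponding to the) vertex $v$. By Claim~\ref{claim:6_claim_2-nested}, the hypothesised LR-vertex $v_i$ with $i\neq 2$ is the unique LR-vertex of $C$, so every other vertex $v_2,\dots,v_{i-1},v_{i+1},\dots,v_k$ is an uncolored non-LR vertex, i.e.\ a U-block (a pre-colored non-LR vertex would be labeled L or R, against the assumption that $v_1$ is the only colored vertex of $C$). I would treat the case in which $v_i$ is labeled L in $A+$, the R-labeled case being symmetric under reversing the column order. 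From the proof of Claim~\ref{claim:6_claim_2-nested} I may reuse that $v_1\subseteq v_i$ and $v_\ell\subset v_i$ for every $\ell<i-1$, while $v_{i-1}$, being adjacent to the L-block $v_i=[1,r(v_i)]$ but not contained in it, satisfies $l(v_{i-1})\le r(v_i)<r(v_{i-1})$; that is, $v_{i-1}$ straddles $r(v_i)$.

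The decisive intermediate step is to show that every unlabeled vertex following $v_i$, namely $v_{i+1},\dots,v_k$, is nested in $v_{i-1}$. First, $v_{i+1}$ is adjacent to $v_i$ and, being a U-block not contained in $v_i$, also straddles $r(v_i)$; hence $v_{i+1}$ and $v_{i-1}$ both contain the columns $r(v_i)$ and $r(v_i)+1$ and are therefore non-disjoint. Since they are two U-blocks at cycle-distance two, they are non-adjacent, a gem is forbidden between them, and so they must be nested. To fix the direction $v_{i+1}\subseteq v_{i-1}$ (and then propagate it to $v_{i+2},\dots,v_k$ along the induced sub-path, whose vertices are pairwise non-adjacent to $v_i$ and hence each disjoint from or nested in $[1,r(v_i)]$), I would play the non-adjacencies of $v_{i-2}$ (which lies in $v_i$ and overlaps $v_{i-1}$ from the left) and of the remaining left-arc vertices against $v_{i+1}$: assuming $v_{i-1}\subsetneq v_{i+1}$ forces $v_{i+1}$ to swallow successive left-arc blocks, producing a forbidden three-row overlap, i.e.\ an $M_0$, which is excluded because $A$ is admissible and, by Lemma~\ref{lema:if_suitable_noM}, contains no $M_0$ once $\Pi$ is suitable.

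With the nesting in hand I conclude $i=3$ as follows. Suppose $i\ge 4$. Then $v_{i-2}$ exists, is distinct from $v_1$, lies in $[1,r(v_i)]$ and overlaps $v_{i-1}$ from the left, whence $l(v_{i-1})\ge l(v_{i-2})+1\ge 2$; in particular $v_1\not\subseteq v_{i-1}$, since $v_1$ contains column $1\notin v_{i-1}$. As $v_1$ and $v_{i-1}$ are two non-LR vertices at cycle-distance at least two, they are non-adjacent, and since they are not nested (we have $v_1\not\subseteq v_{i-1}$ and $r(v_{i-1})>r(v_i)\ge r(v_1)$ gives $v_{i-1}\not\subseteq v_1$) they must be disjoint, giving $l(v_{i-1})>r(v_1)$. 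But $v_k\subseteq v_{i-1}$ forces $l(v_k)\ge l(v_{i-1})>r(v_1)$, so $v_k$ is disjoint from $v_1=[1,r(v_1)]$, contradicting the adjacency $v_k\sim v_1$ along the cycle. Hence $i=3$, so that $v_{i-1}=v_2$ and the previously established nesting reads $v_k\subseteq v_2$, exactly as the remark preceding the claim predicts.

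I expect the main obstacle to be the intermediate step: pinning down the direction of the nesting $v_{i+1}\subseteq v_{i-1}$ and propagating it to the whole right arc $v_{i+1},\dots,v_k$. The one-step nesting is immediate from non-adjacency, but ruling out $v_{i-1}\subsetneq v_{i+1}$ requires carefully combining the non-adjacencies of $v_{i+1}$ with the left-arc vertices $v_2,\dots,v_{i-2}$ and invoking the absence of $M_0$ (together with the suitable-ordering control already secured in Lemma~\ref{lema:hay_suitable_ordering}); once the direction is established, the concluding disjointness argument is short.
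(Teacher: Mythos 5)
Your overall strategy is the same as the paper's: the paper ``proves'' this claim in one sentence, by asserting exactly your intermediate step (every vertex after $v_i$ is contained in $v_{i-1}$) and then exactly your concluding step (adjacency of $v_k$ and $v_1$ forces $v_{i-1}=v_2$). Your elaboration of both steps is correct in the range $4\le i\le k-2$. However, there is a genuine gap at the other end of the cycle, for $i\in\{k-1,k\}$, and neither your argument nor the literal statement of the claim survives there. For $i=k$ the fact you borrow from the proof of Claim~\ref{claim:6_claim_2-nested}, namely $v_1\subseteq v_i$, is obtained from the \emph{non}-adjacency of $v_1$ and $v_i$, which fails because $v_k$ and $v_1$ are consecutive on $C$. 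For $i=k-1$ the direction-fixing step breaks: your propagation of nestings down the left arc is supposed to end in a contradiction (an $M_0$), but it terminates harmlessly at $v_1$, since $v_1$ \emph{is} adjacent to $v_{i+1}=v_k$ and is therefore allowed to overlap it; no third pairwise-overlapping row ever appears.

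This is not a removable technicality: the nesting can genuinely go the other way. Take columns $1,\dots,9$ and let $A$ consist of an L-row $\{1,\dots,4\}$ (colored), unlabeled rows $\{3,4,5\}$, $\{5,\dots,8\}$, $\{2,\dots,9\}$, and an LR-row with $1$'s exactly on $\{1,\dots,6\}$. This $A$ is admissible and partially $2$-nested, the identity ordering is a suitable LR-ordering, and $H(A+)$ is the induced $5$-cycle $v_1=\{1,\dots,4\}$, $v_2=\{3,4,5\}$, $v_3=\{5,\dots,8\}$, $v_4=\{1,\dots,6\}$, $v_5=\{2,\dots,9\}$, whose unique LR-vertex sits at position $i=4=k-1$ and where $v_{i+1}=v_5$ properly \emph{contains} $v_{i-1}=v_3$. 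So both your intermediate nesting and the conclusion $i=3$ are false for this orientation of $C$. What saves the claim (and what the paper uses tacitly, as it does throughout with its ``without loss of generality by symmetry'' conventions) is the freedom to reverse the traversal of the cycle: reversal maps position $i$ to $k+2-i$, so $i=k$ becomes $2$ and $i=k-1$ becomes $3$; in the example above the reversed labeling indeed places the LR-vertex at position $3$. A complete proof must therefore either state the claim modulo this reversal, or run your argument only for $4\le i\le k-2$ and dispatch $i\in\{k-1,k\}$ by re-orienting $C$; as written, your claim to exclude every $i\ge 4$ outright cannot be repaired.
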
	

	 It follows from Claim~\ref{claim:2_teo2nested} that there are at most 6 consecutive LR-vertices in such a cycle $C$.
	Let $Q=<v_i, \ldots, v_j>$ be the maximal LR-subpath and suppose that $|Q|=5$ and $v_1$ is labeled with L. 
	Notice that, if $v_i$ is labeled with R,  then $v_{j-1}$ and $v_j$ are labeled with L. Moreover, since there is a sequence of unlabeled vertices between $v_1$ and $v_i$ and $v_{j-1}$ is nonadjacent to $v_2$, then $v_{j-1}$ is contained in $v_1$ and thus it results adjacent to $v_1$, which is not possible. 
	Then, necessarily $v_i$ is labeled with L and thus $v_j$ is labeled with R. Moreover, if $i>2$, then $v_i$ contains $v_1$ and every unlabeled vertex between $v_1$ and $v_{i-1}$, and if $i=2$, then $v_2 \subsetneq v_1$. In either case, $v_{i+1}$ contains $v_i$. Hence, at most $v_{i+2}$ is labeled with L and there are no other LR-vertices labeled with L for they would be adjacent to $v_i$ or $v_{i+1}$. In particular, the last vertex of the cycle $v_k$ is not labeled with L and, since it is uncolored, $v_k$ is an unlabeled vertex. 
	However, $v_k$ is adjacent to $v_1$, and this results in a contradiction. Therefore, it is easy to see that it is not possible to have more than 4 consecutive LR-vertices in $C$. Furthermore, in the case of $|Q|=4$, either $v_i$ and $v_{i+3}$ are labeled with L and $v_{i+1}$ and $v_{i+2}$ are labeled with R, or $v_i$ and $v_{i+1}$ are labeled with R and $v_{i+3}$ is labeled with L.

\begin{claim} \label{claim:7_claim_2-nested}
Suppose $v_2$ is an LR-vertex and let $v_i$ be another LR-vertex in $C$. Then, either $i=k$ or $i \in \{3,4,5\}$. Moreover, in this last case, $v_j$ is an LR-vertex for every $2 \leq j \leq i$.
%	If $v_2$ is an LR-vertex and $v$ is another LR-vertex in $C$, then either $v=v_k$? or $v$ lies in a chain of consecutive LR-vertices in the subpath $<v_3, v_4, v_5 >$.
\end{claim}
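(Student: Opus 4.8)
The plan is to reduce to the analysis already carried out for Case 4 and to reuse, with the roles shifted, the nesting argument behind Claims~\ref{claim:6_claim_2-nested} and~\ref{claim:6_claim_2-nested_2}. First I would fix the maximal LR-subpath $Q=\langle v_2,\dots,v_m\rangle$ of $C$ that contains $v_2$; it must start exactly at $v_2$ because $v_1$ is a non-LR vertex, and by Claim~\ref{claim:2_teo2nested} together with the refinement established immediately before this claim (a maximal LR-subpath of $C$ has at most four vertices) we get $m\le 5$. Consequently, if the other LR-vertex $v_i$ lies in $Q$, then $i\in\{3,4,5\}$ and, since $Q$ is a subpath, $v_j$ is an LR-vertex for every $2\le j\le i$; this is exactly the second alternative of the claim.

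It then remains to prove that any LR-vertex $v_i$ with $v_i\notin Q$ satisfies $i=k$. I would first record the family structure coming from admissibility: by Remark~\ref{rem:A_LR_2nested} the LR-rows of $A$ are partitioned into two inclusion-totally-ordered families, so two same-label LR-vertices from one family have nested blocks and are therefore nonadjacent in $H$, whence adjacent same-label LR-vertices necessarily come from different families. This fact, together with the bound $|Q|\le 4$ and the two admissible labelings of a length-four run, restricts the labels occurring along $Q$ and is what will control how the block of a hypothetical further LR-vertex can sit relative to the blocks of $Q$.

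The heart of the argument is to show that no LR-vertex can occur at an interior position $m<i<k$. Here I would adapt the proof of Claim~\ref{claim:6_claim_2-nested}: after $v_m$ the vertices up to $v_i$ form a chain of unlabeled vertices whose endpoints move monotonically, since $\Pi$ is suitable (Definition~\ref{def:suitable_ordering}) and $C$ is induced, and this chain propagates a containment relation along it from $Q$ to $v_i$. Splitting on the labels of $v_2$ and of $v_i$, each configuration should force one of two contradictions: either the propagated relation makes $v_i$ adjacent to a vertex of $Q$, violating that $C$ is induced; or three of the rows involved realize a forbidden member of $\mathcal{D}$ (such as $D_8$, $D_9$ or $D_{13}$), contradicting admissibility. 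The wrap-around edge $v_k\sim v_1$, whose orientation is fixed by $v_1$ being an L-labeled colored vertex, is what singles out $k$ as the only position at which a detached LR-vertex survives: a terminal LR-block adjacent to $v_1$ must extend back across the block of $v_1$, which is consistent only at the very end of the cycle, and otherwise an $S_7(\cdot)$ or $F_2(\cdot)$ subconfiguration appears. The same circle of ideas rules out a ``gap'' inside $\{3,4,5\}$, e.g.\ $v_2$ an LR-vertex, $v_3$ unlabeled and $v_4$ an LR-vertex: examining the adjacencies of $v_3$ to $v_2,v_4$ and its nonadjacency to $v_1$ forces $v_2$ and $v_4$ to interact, producing either an extra chord of $C$ or a forbidden member of $\mathcal{D}$, so in fact $v_3$ must itself be an LR-vertex. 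I expect the main obstacle to be precisely this interior case: one must simultaneously track which family each LR-vertex belongs to, the monotone direction of the intervening unlabeled chain, and the $\mathrm{L}/\mathrm{R}$ labels, and then verify that every interior placement yields either a chord of $C$ or an explicit subconfiguration from $\mathcal{D}$, $\mathcal{S}$ or $\mathcal{F}$; isolating a uniform reason, rather than enumerating the handful of subcases by hand, is the delicate point.
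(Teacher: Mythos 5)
Your reduction of the second alternative is fine: since $v_1$ is not an LR-vertex, the maximal LR-subpath $Q$ containing $v_2$ starts at $v_2$; the bound of at most four consecutive LR-vertices established just before the claim gives $Q\subseteq\langle v_2,\dots,v_5\rangle$; and any $v_i\in Q$ automatically makes $v_j$ an LR-vertex for all $2\le j\le i$. But the heart of the claim --- that an LR-vertex separated from $v_2$ by at least one unlabeled vertex can only be $v_k$ --- is precisely what your proposal does not prove. You describe a containment ``propagated from $Q$ to $v_i$'' and then assert that every labeling \emph{should} force either a chord of $C$ or a member of $\mathcal{D}$, $\mathcal{S}$ or $\mathcal{F}$, explicitly conceding that isolating a uniform reason is the unresolved delicate point. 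That is a plan, not an argument, and it also reaches for heavier machinery than the claim requires.

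The paper's proof of this step is short and uniform, and the key idea your proposal misses is to anchor the containment at $v_1$ rather than at $Q$. Since $v_2$ is an LR-vertex adjacent to $v_1$ and $v_1$ is an L-row, suitability of $\Pi$ forces $v_2$ to be an L-block with $v_2\subsetneq v_1$. Now let $v_i$ ($i\neq k$) be an LR-vertex with $v_3,\dots,v_{i-1}$ unlabeled: each $v_j$ is nonadjacent to $v_1$ (the cycle is induced) yet meets $v_1$ (it meets its predecessor, which lies inside $v_1$), so inductively $v_j\subseteq v_1$ for $3\le j\le i-1$. Then $v_i$ cannot be an R-block, because by suitability an R-block is disjoint from the L-block of $v_1$ and hence from $v_{i-1}\subseteq v_1$, contradicting the required adjacency $v_i\sim v_{i-1}$; so $v_i$ is an L-block, and nonadjacency of $v_i$ to $v_1$ (this is where $i\neq k$ enters) forces $v_1\subseteq v_i$. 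But then $v_{i-1}\subseteq v_1\subseteq v_i$, which contradicts the adjacency rule of $H$ for an LR-vertex and a non-LR vertex, namely that the non-LR row must not be contained in the LR-block. No forbidden subconfiguration from $\mathcal{D}$, $\mathcal{S}$ or $\mathcal{F}$ and no tracking of the two LR-families is needed; the contradiction lives entirely in the adjacency/containment rules of $H$ together with suitability of the LR-ordering. Your ``gap inside $\{3,4,5\}$'' scenario (e.g.\ $v_2$ LR, $v_3$ unlabeled, $v_4$ LR) is the same situation and is disposed of by the same argument.
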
	

Notice first that, if $v_2$ is an LR-vertex, then by definition of $H$, $v_2$ is labeled with L and $v_2 \subsetneq v_1$. 
If $i = 3$ or $i=k$, then we are done. Suppose that $i \neq k$ and there is a sequence of unlabeled vertices $v_j$ between $v_2$ and $v_i$, where $j=3, \ldots, i-1$. Hence, since $v_2 \subsetneq v_1$, necessarily $v_j \subseteq v_1$ for $j=3, \ldots, i-1$. In that case, $v_i$ is labeled with the same letter than $v_1$ and $v_2$. Moreover, since $i \neq k$, $v_1$ and $v_i$ are nonadjacent and thus $v_i \supseteq v_1$ which is not possible since $v_{i-1} \subseteq v_1$. The contradiction came for assuming that there is a sequence of unlabeled vertices between $v_2$ and $v_i$ and that $v_i \neq v_k$. 
Hence, if $i \neq 3,k$, then every vertex between $v_2$ and $v_i$ is an LR-vertex. Since we know that the maximal LR-subpath in $C$ has length at most $4$ and $v_2$ is an LR-vertex, necessarily $v_i$ must be either $v_3$, $v_4$ or $v_5$. \QED

 We now split the proof into two cases.
 \subcase \textit{$v_2$ is an LR-vertex.}
 
 	Suppose first that $v_2$ is the only LR-vertex in $C$. By definition of $H$, $v_2$ is labeled with L and $v_2 \subsetneq v_1$. Since there are no other LR-vertices in $C$, $v_j \subseteq v_1$ for every $j<k$. 
 	In this case, we find $F'_1(k)$ as a subconfiguration in $A$ induced by the columns $r(v_2), \ldots, r(v_k)$. %, c_R$.
 	
 	Suppose now that there is exactly one more LR-vertex $v_i$ with $i > 2$. If $i \neq 3$, then by the previous claim we know that $i=k$. If $v_k$ is labeled with L, then we find $F'_1(k)$ as a subconfiguration in $A$ induced by the columns $r(v_2), \ldots, r(v_k)$. %, c_R$. 
 	If instead $v_k$ is labeled with R, then $r(v_1) > l(v_k)$ but this is not possible because the LR-ordering used to define $A+$ is suitable. 
 	Suppose that $i =3$. If $v_3$ is labeled with L, then $v_3 \supseteq v_1$, and if $v_3$ is labeled with R, then $v_3$ is the R-block corresponding to the same LR-row of $v_2$ in $A$. In either case, since every other vertex $v_j$ in $C$ is unlabeled, then $l(v_j) > r(v_1)$ for every $j<k$. Thus, if $v_3$ is labeled with L, then we find $F'_2(k)$ as a subconfiguration in $A$ induced by the columns $r(v_1), r(v_2), r(v_3), \ldots, r(v_k)$. % c_L, 
 	If instead $v_3$ is labeled with R, then we find $S_1(k)$ as a configuration in $A$ induced by the columns $r(v_1), r(v_{k-1}), \ldots, r(v_3)$. %c_L, c_R.
 	
 	Suppose that there are exactly two LR-vertices distinct than $v_2$. As a consequence of Claim~\ref{claim:7_claim_2-nested}, we see that these vertices are necessarily $v_3$ and $v_4$.
 If $v_3$ and $v_4$ are LR-vertices and are both labeled with L, then $v_3$ and $v_4$ correspond to two distinct LR-rows that are not nested. Moreover, since $v_2 \subsetneq v_1$, then $v_3 \supseteq v_1$ and thus $v_1 \subseteq v_4 \subsetneq v_3$. Hence, since $v_5$ is unlabeled and there is at least one column for which the R-blocks of $v_2$, $v_3$ and $v_5$ have $1$, $0$ and $1$, respectively, we find $F_1(k)$ as a subconfiguration $A$ induced by the columns $1$ to $k-1$.
 
 If instead $v_3$ or $v_4$ (or both) are labeled with R, then $v_3$ corresponds to the same LR-row in $A$ as $v_2$. This follows from the fact that, if $v_3$ and $v_4$ correspond to the same LR-row in $A$, then $v_3$ is labeled with L and $v_4$ is labeled with R. Hence, since $v_3 \subseteq v_1$, $v_k$ cannot be adjacent to $v_1$ and thus this is not possible. However, if $v_3$ is the R-block of the LR-row corresponding to $v_2$, then we find $D_9$ as a subconfiguration in $A$ induced by the three %\todo{four?} 
 rows corresponding to $v_1$, $v_2$ and $v_4$.
 
  	Suppose that there are exactly three LR-vertices other than $v_2$. Hence, these vertices are $v_3$, $v_4$ and $v_5$. Recall that $v_1$ and $v_2$ are labeled with L, and that two LR-vertices labeled with distinct letters are adjacent only if they correspond to the same LR-row in $A$. In any case, $v_5$ is labeled with R. However, since $v_1$ is labeled with L and $v_3 \supseteq v_1$, necessarily $v_k$ is adjacent to $v_3$, $v_4$ or $v_5$, which is a contradiction.
 
	 \subcase \textit{$v_2$ is not an LR-vertex.} 
	 
	By Claim~\ref{claim:6_claim_2-nested_2}, if there is an LR-vertex $v$, then there are no other LR-vertices and $v = v_3$.

	Since there is a exactly one LR-vertex in $C$ (we are assuming that there is at least one LR-vertex for if not the proof is as in Theorem~\ref{teo:case1_2nested}, then $v_2$ contains $v_j$ for every $j>3$. If $v_3$ is labeled with L, then there is $F'_2(k)$ as a subconfiguration in $A$ induced by the columns $r(v_1), \ldots, l(v_2)$. %$c_L$ 
	If instead $v_3$ is labeled with R, then we find $S_1(k)$ as a subconfiguration in $A$ induced by the same columns.% adding column $c_R$. 

 	\case Let us suppose there is n induced odd cycle with exactly two consecutive colored vertices.
 	\subcase \textit{There is a cycle of length $3$ with exactly one uncolored vertex. }
 	
 	Let $v_1, v_2, v_3, v_1$ be a cycle of length $3$ in $H$ with exactly one uncolored vertex. We assume without loss of generality that $v_1$ and $v_3$ are the colored vertices. Since $A+$ is defined by considering a suitable LR-ordering and $v_1$ and $v_3$ are adjacent colored vertices, then $v_1$ and $v_3$ are labeled with distinct letters, for if not, the underlying uncolored matrix induced by these rows either induce $D_0$ or do not induce any kind of gem. Moreover, $v_1$ and $v_3$ are colored with distinct colors because $A$ is admissible and thus $A$ contains no $D_1$ as a subconfiguration. 
 	Furthermore, $v_2$ is unlabeled for if not it cannot be adjacent to both $v_1$ and $v_3$, since in that case $v_2$ should be nested in both $v_1$ and $v_3$. However, we find $F''_0$ as a subconfiguration of $A$, and this is a contradiction.
 	
 	\subcase \textit{There is an induced odd cycle $C= v_1, \ldots, v_j, v_1$ where the only colored vertices are $v_1$ and $v_j$.} 
 	
 	Notice that $v_1$ and $v_j$ are colored with distinct colors, thus either $v_1$ is labeled with L and $v_j$ is labeled with R or vice versa, for $A$ is admissible and two vertices corresponding to rows both labeled with L or with R in $A$ do not induce adjacent vertices in $H(A+)$. We assume without loss of generality by symmetry that $v_1$ is labeled with L  and colored with red and $v_j$ is labeled with R and colored with blue.
 	
 	Suppose first that there are no LR-vertices in $C$. In this case, since the vertices $v_2, \ldots, v_{j-1}$ correspond to unlabeled rows, the rows corresponding to $v_2, \ldots, v_{j-2}$ are nested in $v_j$ and the rows coresponding to $v_3, \ldots, v_{j-1}$ are nested in $v_1$. In this case, we find $F_1(j)$ as a subconfiguration of $A$ and this results in a contradiction.
 	
 	If instead there is at least one LR-vertex in $C$, then there is exactly one and it should be either $v_2$ or $v_{j-1}$. This follows from the fact that we used a suitable LR-ordering to obtain $A+$ and that the blocks corresponding to $v_1$ and $v_j$ intersect precisely '`in the middle of the matrix $A$''. Hence, the blocks of any LR-row cannot intersect both L-blocks and R-blocks from colored rows of $A$ and therefore, if $v_2$ (resp.\ $v_{j-1}$) is an LR-row, then the L-block of $v_2$ (resp.\ R-block of $v_{j-1}$) should be nested in $v_1$ (resp.\ $v_j$). We assume without loss of generality that $v_2$ it the only LR-vertex in $C$. Thus, the L-block of the LR-row corresponding to $v_2$ is nested in $v_1$ and does not intersect $v_j$. Moreover, the rows corresponding to $v_3, \ldots, v_{j-2}$ are nested in $v_1$ and are chained to the right. Therefore we find $S_1(j-2)$ in the subconfiguration induced by the rows corresponding to $v_2, \ldots, v_j$
 	
\end{mycases} 	
 	
 	\vspace{1mm}
 	This finishes the proof, since we have reached a contradiction by assuming that $A$ is partially $2$-nested but not $2$-nested.
\end{proof}

\section*{Acknowledgements}
We would like to thank Luciano Grippo for his engagement during the early stages of this work.

%%%%%%%%%%%%%%%%%%%%%%%%%%%%%%%%%%%%%%%%%%%%%%%%%%%%%%%%%%%
%%%%%%%%%%%%%%%%%%%%%%%%%%%%%%%%%%%%%%%%%%%%%%%%%%%%%%%%%%%

%\section{Final remarks}

%We leave some possible continuations of this work.

%\begin{itemize}
%\item %We show a characterization by forbidden induced subgraphs for those split graphs that are also circle. Are all the subgraphs given in Theorem~\ref{teo:circle_split_caract} also minimally non-circle?

%\item 

%\end{itemize}

\bibliographystyle{abbrv}
\bibliography{2nested}

\end{document}